\newtheorem{theorem}{Theorem}[section]
\newtheorem{proposition}[theorem]{Proposition}
\newtheorem{lemma}[theorem]{Lemma}
\newtheorem{claim}[theorem]{Claim}
\newtheorem{remark}[theorem]{Remark}
\newtheorem{definition}[theorem]{Definition}
\newtheorem{conjecture}{Conjecture}
\newcommand{\LP}{L_p}
\newcommand{\linf}{\ell_{\infty}}
\newcommand{\opLP}{\mathcal{L}(L_p)}
\newcommand{\opLOne}{\mathcal{L}(L_1)}
\newcommand{\X}{\mathcal{X}}
\newcommand{\Y}{\mathcal{Y}}
\newcommand{\Z}{\mathcal{Z}}
\newcommand{\opX}{\mathcal{L}(\X)}
\newcommand{\N}{\mathbb{N}}
\newcommand{\RN}{\mathbb{R}}
\newcommand{\LCN}{\lambda\in\mathbb{C}}
\newcommand{\HaarApendix}{\{h_{n,i}\}_{n=0,\, i=1}^{\infty\,\,\,\,\,\,\, 2^n}}
\newcommand{\HaarXX}{\{h_{0,0}\}\cup\{h_{n,i}\}_{n=0,\, i=1}^{\infty\,\,\,\,\,\,\, 2^n}}
\newcommand{\sumk}{\sum_{k=1}^{\infty}}
\newcommand{\seq}[1] {\displaystyle \{#1_i\}_{i=1}^{\infty}}
\newcommand {\dseq}[1] {\{#1_{n,i}\}_{n=0,\,i=1, }^{\infty,\,\,\,\,\, 2^n}}
\newcommand{\sumX}{\big (\sum \X\big )_p}
\newcommand{\IX}{\mathcal{M}_\X}
\newcommand{\IXone}{\mathcal{M}_{L_1}}
\newcommand{\IXp}{\mathcal{M}_{L_p}}
\newcommand {\indf}{\mathbf{1}_{\Delta_i}}
\newcommand{\AlgE}{\mathcal{E}}
\newcommand{\pbounds}{1 \leq {\lowercase {p}} <\infty}
\newcommand{\strictpbounds}{1 < {\lowercase {p}} <\infty}
\DeclareMathOperator{\supp}{supp}
\DeclareMathOperator{\vspan}{span}
\def\e{\varepsilon}
\title{Commutators on $\LP$, $1\leq p <\infty$\thanks{AMS Classification: 47B47, 46E30}}
\author{D. Dosev\thanks{Young Investigator, NSF Workshop in Analysis and Probability, Texas A\&M University}, W. B. Johnson\thanks{Supported in part by NSF DMS-1001321 and U.S.-Israel Binational Science Foundation}, G. Schechtman\thanks{Supported in part by U.S.-Israel Binational Science Foundation. Participant NSF Workshop in Analysis and Probability, Texas A\&M University}}
\begin{document}
\maketitle

\centerline{Dedicated to the memory of Nigel Kalton}
\begin{abstract}
The operators on $\LP=L_p[0,1]$, $1\leq p<\infty$, which are not commutators are those of the form $\lambda I + S$ where
$\lambda\neq 0$ and $S$ belongs to the largest ideal in $\opLP$. The proof involves new structural results for operators on $\LP$ which are of independent interest.
\end{abstract}


\section{Introduction}
When studying derivations on a general Banach algebra $\mathcal{A}$, a natural  problem that arises is to classify the commutators in the algebra; i.e.,  elements of the form $AB-BA$. The problem as stated is hard to tackle on general Banach algebras.  The only known obstruction was proved in 1947  by Wintner(\cite{Wintner}).  He  proved that the identity in a unital Banach algebra is not a commutator, which immediately implies that no operator of the form  $\lambda I + K$, where $K$  belongs to a norm closed (proper) ideal $\mathcal{I}$ of $\mathcal{A}$ and $\lambda\neq 0$, is a commutator in the Banach algebra $\mathcal{A}$.  On the other hand, there seems to be no general conditions for checking whether an element of a Banach algebra is a commutator.

The situation changes if instead of an arbitrary Banach algebra we consider the algebra $\opX$ of all bounded linear operators on the Banach space $\X$. In this setting, one hopes that the underlying structure of the space $\X$ will provide enough information about the operators on $\X$ to allow one to attack the problem successfully. Indeed, this is the case provided the space $\X$ has some ``nice'' properties. The first complete classification of the commutators in  $\opX$ was given in 1965 by Brown and Pearcy (\cite{BrownPearcy}) for the case $\X = \ell_2$. They   proved that the only operators in $\mathcal{L}(\ell_2)$ that are not commutators have the form $\lambda I + K$, where $K$  is compact and  $\lambda\neq 0$. In 1972, Apostol proved  in \cite{Apostol_lp} that the same classification holds for the commutators on $\ell_p$, $1<p<\infty$, and  one year later, he proved that the same classification holds in the case of $\X = c_0$ (\cite{Apostol_c0}). Apostol had some partial results in \cite{Apostol_lp} and \cite{Apostol_c0} about special classes of operators on $\ell_1$, $\linf$, and $C([0,1])$, but he was unable to obtain a complete classification of the commutators on any of those spaces. A year before Apostol's results, Schneeberger proved  that the compact operators on $\LP$, $1<p<\infty$, are commutators but, as it will become apparent later, one needs a stronger result in order to classify the commutators on these spaces.

All of the aforementioned spaces have one common property; namely,
if $\X = \ell_p$, $1\leq p<\infty$, or $\X = c_0$ then $\X\simeq \sumX$ ($p=0$ if $\X = c_0$). It turns out that this property  plays an important role for proving the classification of the commutators on other spaces.
Thirty five years after Apostol's result, the first author obtained in \cite{Dosev} a complete classification of the commutators on $\ell_1$, which, as one may expect, is the same as the classification of the commutators on $\ell_2$.
A common feature of all the spaces $\X=\ell_p$, $1\le p<\infty$ and $\X=c_0$ is that the ideal of compact operators ${\mathcal K}(\X)$ on $\X$ is the largest non-trivial ideal in $\opX$.  The situation for $\X=\ell_\infty$ is different. Recall that an operator $T:\X\to\Y$ is strictly singular provided the restriction of $T$ to any infinite dimensional subspace of $\X$ is not an isomorphism.  On $\ell_p$, $1\le p <\infty$, and on $c_o$, every strictly singular operators is compact, but on ${\mathcal L}(\ell_\infty)$, the ideal of strictly singular operators contains non-compact operators (and, incidentally, agrees with the ideal of weakly compact operators).  In ${\mathcal L}(\ell_\infty)$, the ideal of strictly singular operators is the largest ideal, and it was proved in  $\cite{DJ}$ that all operators on $\linf$ that are not commutators have the form $\lambda I + S$, where $\lambda\neq 0$ and $S$  is strictly singular.

The classification of the commutators on $\ell_p$, $1\leq p\le \infty$, and on $c_0$, as well as partial  results on other spaces, suggest the following:
\begin{conjecture}\label{conj}
Let $\X$ be a Banach space such that $\X\simeq \sumX$, $1\leq p\leq\infty$ or $p=0$
(we say that such a space admits a Pe\l czy\'nski decomposition). Assume that $\opX$ has a largest ideal
$\mathcal{M}$. Then every non-commutator on $\X$ has the form $\lambda I + K$, where $K\in \mathcal{M}$  and $\lambda\neq 0$.
\end{conjecture}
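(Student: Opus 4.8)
The plan is to separate the statement into its easy half and its hard half. The easy half --- that $\lambda I + K$ with $\lambda \neq 0$ and $K \in \mathcal{M}$ is never a commutator --- I would get from Wintner's theorem by passing to the quotient: since $\mathcal{M}$ is a proper (closed) ideal, $\opX/\mathcal{M}$ is a unital Banach algebra with unit $\bar I \neq 0$, and if $\lambda I + K = [A,B]$ with $\lambda \neq 0$ then $\bar I = [\lambda^{-1}\bar A, \bar B]$ in the quotient, contradicting Wintner. This argument is completely general, so the whole content of the conjecture is the converse: every $T \in \opX$ which is \emph{not} of the form $\lambda I + K$ ($\lambda \neq 0$, $K \in \mathcal{M}$) is a commutator. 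Equivalently, I must show that $T$ is a commutator in the two cases $T \in \mathcal{M}$, and $T - \lambda I \notin \mathcal{M}$ for every $\lambda \neq 0$.

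The structural backbone I would use is that, $\mathcal{M}$ being the \emph{largest} ideal, $T \notin \mathcal{M}$ forces the ideal generated by $T$ to be all of $\opX$, so $I_\X = \sum_i A_i T B_i$; together with $\X \simeq \sumX$ this yields a factorization of $I_\X$ through $T$ (possibly after replacing $T$ by a finite amplification of itself, which is harmless for what follows, and which for $\LP$ is part of the structural work). Combining such a factorization with the Pe\l czy\'nski decomposition, the goal is to produce a decomposition $\X = \bigoplus_{n \ge 0}\X_n$ with each $\X_n \simeq \X$ on which $T$ becomes similar to an ``Anderson normal form'': a block operator that vanishes on $\X_0$ and, on $\bigoplus_{n \ge 1}\X_n$, repeats the pattern of its own action up to a block-shift term. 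The hypothesis ``$T \ne \lambda I + K$'' is what I expect to consume at exactly this stage --- it should be precisely what allows the non-$\mathcal{M}$ part of $T$ to be spread over infinitely many mutually isomorphic blocks, whereas for $T = \lambda I + K$, $\lambda \neq 0$, Wintner obstructs this and the reduction correctly breaks down.

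The engine for turning a normal form into a commutator is a Pe\l czy\'nski-decomposition version of Anderson's construction: on $\sumX$, operators of the shape $\mathrm{diag}(0, R, R, R, \dots)$, and more generally block weighted shifts, should be exhibited as commutators $[X, J]$ with $J$ a shift-type operator and $X$ built block-by-block by a telescoping recursion whose convergence is guaranteed by the $\ell_p$- (or $c_0$-) structure of the sum. Separately I would need to show that every operator in $\mathcal{M}$ is itself a commutator, and for this a concrete description of $\mathcal{M}$ is indispensable, so that one can check its members already sit --- after conjugation --- in such a normal form. For $\X = \LP$ this forces one to work with $\IXp$, which I expect to be the ideal of operators through which $I_{\LP}$ does not factor, using that such operators essentially live on a single copy of $\LP$ with infinitely much room to spare.

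The step I expect to be the real obstacle is the structural one, and it is also why the statement is still only a conjecture in general. For $\ell_p$ and $c_0$ the largest ideal is just $\mathcal{K}(\X)$, and for $\linf$ it is the strictly singular ($=$ weakly compact) operators, all classically well understood. For $\LP$ with $p \neq 2$, by contrast, it is not even clear a priori that $\opLP$ has a largest ideal, and once it does, $\IXp$ is far bigger than the compacts --- it contains operators factoring through $\ell_p$, through $\ell_2$, through $\big(\sum \ell_2\big)_p$, and so on. So the hard work is: (i) prove $\opLP$ has a largest ideal and identify it; (ii) prove a structure theorem for an operator $T \notin \IXp$, controlling the complemented copies of $\LP$ inside $\LP$ on which $T$ restricts to an isomorphism, sharp enough to extract the block decomposition above; and (iii) verify that every operator in $\IXp$ is a commutator. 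Items (i) and (ii) are the ``new structural results for operators on $\LP$'' promised in the abstract, and I expect them to be where essentially all the difficulty lies; for a general space with a Pe\l czy\'nski decomposition the lack of any explicit handle on $\mathcal{M}$ is exactly what blocks this program, which is why it remains a conjecture in that generality.
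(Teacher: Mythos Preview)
The statement is labeled a \emph{conjecture}, and the paper does not prove it in the stated generality --- only for $\X=\LP$, $1\le p<\infty$. You recognize this, so your proposal is properly read as a strategy outline. The easy direction (Wintner via the Calkin-type quotient $\opX/\mathcal{M}$) is exactly what the paper uses.

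For the hard direction your route and the paper's diverge in packaging. You propose to build an Anderson-style normal form $\mathrm{diag}(0,R,R,\dots)$ directly on a block decomposition of $\X$ and then realize it as $[X,J]$ with $J$ a shift. The paper instead imports a ready-made black box, Theorem~\ref{workthm} from \cite{DJ}: if one can find $X\subset\X$ with $X\simeq\X$, $T_{|X}$ an isomorphism, $d(X,TX)>0$, and $X+TX$ complemented, then $T$ is a commutator. The Anderson machinery is buried inside that theorem's proof. Consequently, for the case ``$T-\lambda I\notin\mathcal{M}$ for all $\lambda$'' the paper's entire task is the purely structural Theorem~\ref{movingthm}, producing such an $X$. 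Your factorization-of-identity observation is correct (indeed $\mathcal{M}=\IX$ here), but it is weaker than what is needed: $I=ATB$ alone does not manufacture the angle condition $d(X,TX)>0$, and that condition is where the real work (Lemmas~\ref{UNCLemma}--\ref{reductionlemma}, and ultimately Theorem~\ref{thm:stabilization}) goes.

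For the case $T\in\mathcal{M}$ the paper takes a more concrete shortcut than your ``check members sit in normal form after conjugation'': Theorem~\ref{compactrestrictionthm} shows that for $1\le p<2$ every $T\in\IXp$ restricts to a \emph{compact} operator on some complemented copy of $\LP$, and then \cite[Corollary~12]{Dosev} finishes. This compact-restriction step --- driven by Theorem~\ref{thm:stabilization} and Lemma~\ref{compactlemma} --- is the key mechanism you do not anticipate, and it is also what proves $\IXp$ is closed under addition (hence is an ideal at all, your item (i)). So the paper's two structural theorems collapse your items (i)--(iii) into two concrete statements whose proofs are intertwined, rather than the three-stage program you sketch.
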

Here and elsewhere in this paper, when we refer to an ideal of operators we always mean a non-trivial, norm closed, two sided ideal.
This conjecture is stated in $\cite{DJ}$.
To verify  Conjecture \ref{conj} for a given Banach space  $\X$, one must prove two steps:

{\textbf{Step 1.}} Every operator $T\in\mathcal{M}$ is a commutator.\\
{\textbf{Step 2.}} If $T\in\opX$ is not of the form $\lambda I + K$, where $K\in\mathcal{M}$ and $\lambda\neq 0$, then $T$ is a commutator.

The methods for proving {\textbf{Step 1}} in most  cases where the complete classification of the commutators on the space $\X$ is known are based on the fact that if $T\in\mathcal{M}$ then for every subspace $Y\subseteq\X$,  $Y\simeq\X$ and every $\varepsilon > 0$ there exists a complemented subspace $Y_1\subseteq Y$, $Y_1\simeq Y$ such that $\|T_{|Y_1}\|<\varepsilon$.
 Let us just mention that this fact is fairly easy to see if $T$ is a compact operator on $c_0$ or $\ell_p$, $1\leq p<\infty$ (\cite[Lemma 9]{Dosev}, see also \cite{Apostol_lp}).  (Throughout this work,  $Y \simeq X$  means that $X$ and $Y$ are isomorphic; i.e., linearly homeomorphic; while
$Y \equiv X$ means that the spaces are isometrically isomorphic.)

Showing the second step is usually more difficult than showing {\textbf{Step 1}}. In most cases for which we have a complete characterization of the commutators on $\X$, we use the following theorem, which is an immediate consequence of Theorem 3.2 and Theorem 3.3 in \cite{DJ}.
\begin{theorem}\label{workthm}
Let $\X$ be a Banach space such that $\X\simeq\sumX$,  $1\leq p\leq\infty$ or $p=0$. Let $T\in\opX$ be such that there exists a subspace $X\subset\X$ such that $X\simeq \X$, $T_{|X}$ is an isomorphism, $X+T(X)$ is complemented in $\X$, and $d(X, T(X))>0$. Then $T$ is a commutator.
\end{theorem}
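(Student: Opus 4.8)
The plan is to \emph{reduce}, using the four hypotheses, to a Pe\l czy\'nski\nobreakdash-type decomposition of $\X$ adapted to $T$, and then to quote the shift-type commutator construction underlying Theorems~3.2 and~3.3 of \cite{DJ}; the hypotheses are tailored precisely to produce that normal form. First I would note that since $d(X,T(X))>0$, the algebraic sum $X+T(X)$ is automatically a closed topological direct sum with bounded coordinate projections, so that, together with the complementability of $X+T(X)$, one gets an internal direct sum $\X=X\oplus T(X)\oplus W$. Since $T_{|X}$ is an isomorphism, $T(X)\simeq X\simeq\X$. As $\X\simeq\sumX$ and $W$ is complemented in $\X$, the decomposition method (a routine Pe\l czy\'nski swindle) gives $W\oplus\X\simeq\X$, hence $T(X)\oplus W\simeq\X\simeq\sumX$; writing $\X=A\oplus B$ with $A:=X$ and $B:=T(X)\oplus W$, we have arranged $A\simeq B\simeq\X$, $T(A)\subseteq B$ (so the $A$-component of $T_{|A}$ vanishes), $A\cap T(A)=0$, and $T_{|A}\colon A\to B$ an isomorphism onto the complemented subspace $T(X)$ of $B$. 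This configuration — a complemented copy of $\X$ inside $\X$ moved by $T$ isomorphically onto a disjoint complemented copy — is exactly the input for the criteria of \cite[Theorems~3.2 and~3.3]{DJ}, and applying them finishes the proof. (For $p\in\{0,\infty\}$ one uses $c_0$- respectively $\ell_\infty$-sums throughout, with no change in the argument.)

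For context, the mechanism behind those criteria is an argument in the Brown--Pearcy--Apostol--Anderson lineage. One refines the above into a decomposition $\X=\bigoplus_{n=0}^{\infty}\X_n$ (isomorphic to a $\sumX$) with each $\X_n\simeq\X$, $\X_0=A$, and $T$ carrying $\X_0$ isomorphically onto a complemented subspace of $\bigoplus_{n\ge1}\X_n$; one then seeks $T=CD-DC$ where $D$ is essentially the forward shift along $\{\X_n\}$ — using the isomorphism $T_{|\X_0}$ as the first weight, so that $D$ genuinely advances each block one step and is bounded below where needed — and $C$ is a triangular operator determined recursively, block by block, so that the commutator telescopes down to $T$. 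The feature that keeps $C$ bounded, rather than letting the naive recursion blow up geometrically, is that the shift is already built into $T$: the uncontrolled part of $T$ (its action on $\bigoplus_{n\ge1}\X_n$ and on the old remainder $W$) sits on blocks that $D$ pushes out to infinity, so the recursion closes with uniform norm control, and the distance and complementation hypotheses are exactly what make this quantitative.

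I expect the main obstacle to be the uniform quantitative bookkeeping rather than any algebraic identity. Concretely: one must check in the reduction that the internal direct sum of the $\sumX$\nobreakdash-decomposition of $X$, of its $T$-image, and of the complement $W$ is again (isomorphic to) a $\sumX$-decomposition with \emph{uniformly} bounded coordinate projections — which genuinely uses $d(X,T(X))>0$, the complementability of $X+T(X)$, and the $\X\simeq\sumX$ structure simultaneously — and, in the commutator construction, that the triangular solution $C$ and the shift $D$ stay bounded. Both points are precisely what \cite{DJ} handles; for the present theorem the only thing left to verify is that the four hypotheses deliver the normal form above, which is the content of the first paragraph.
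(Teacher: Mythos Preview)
Your proposal is correct and aligns with the paper's treatment: the paper does not give an independent proof of this theorem but simply records it as ``an immediate consequence of Theorem~3.2 and Theorem~3.3 in \cite{DJ},'' and your argument does exactly that, invoking the same two results from \cite{DJ} after spelling out the reduction to the required normal form. Your first paragraph makes the ``immediate'' explicit---the Pe\l czy\'nski swindle showing $T(X)\oplus W\simeq\X$ is valid (since $\X\simeq\X\oplus\X\oplus W\simeq\X\oplus W$ via $X,T(X)\simeq\X$), and the resulting decomposition $\X=A\oplus B$ with $T(A)$ complemented in $B$ is precisely the input those theorems require---while your second and third paragraphs correctly sketch the shift/triangular mechanism behind \cite{DJ}, which the present paper does not reproduce.
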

In the previous theorem the distance is defined as the distance from $Y$ to the unit sphere of $X$.
The basic idea is to prove that if $T\in\opX$ is not of the form $\lambda I + K$, where $K\in\mathcal{M}$ and $\lambda\neq 0$, then $T$ satisfies the assumptions of Theorem \ref{workthm} and hence $T$ is a commutator. This is not obvious even for the  classical sequence spaces $c_0$ and
$\ell_p$, $1\leq p<\infty$, but it suggests what one may try to prove for other classical Banach spaces in order to obtain a complete characterization of the commutators on those spaces.

Following the ideas in \cite{DJ}, for a given Banach space $\X$ we define the set
\begin{equation}\label{maxideal}
\IX = \{T\in\opX\,:\, I_{\X}\,\, \textrm{does not factor through}\,\, T \}.
\end{equation}
(We say that $S\in\opX$ factors trough $T\in\opX$ if there are $A,B\in\opX$ such that $S = ATB$.)
As noted in \cite{DJ}, this set comes naturally from the investigation of the structure of the commutators on several classical Banach spaces. In the cases of $\X = \ell_p$, $1\leq p\leq\infty$, and $\X = c_0$,
the set $\IX$ is the largest ideal in $\opX$ (observe that if $\IX$ is an ideal then it is the largest ideal in $\opX$ and $\IX$ is an ideal if and only if it is closed under addition). It is also known that $\IX$ is the largest ideal for $\X = \LP$, $\pbounds$, which we discuss later.

In some special cases of finite sums of Banach spaces we know that the classification of the commutators on the sum depends only on the classification of the commutators on each summand. In particular, this is the case with the space $\ell_{p_1}\oplus \ell_{p_2}\oplus\cdots\oplus \ell_{p_n}$ where the first two authors proved in \cite{DJ} that all non-commutators on $\ell_{p_1}\oplus \ell_{p_2}\oplus\cdots\oplus \ell_{p_n}$ have the
form $\lambda I + K$ where $\lambda\neq 0$ and $K$ belongs to some ideal in $\mathcal{L}(\ell_{p_1}\oplus \ell_{p_2}\oplus\cdots\oplus \ell_{p_n})$.

In this paper we always denote $L_p=L_p([0,1],\mu)$, where $\mu$ is the Lebesgue measure. Our main structural results are:
\begin{theorem}\label{movingthm}
Let $T\in\opLP$, $1\leq p<2$.
If $T-\lambda I\notin {\mathcal M}_{L_p}$ for all $\LCN$ then there exists a subspace $X\subset\LP$ such that $X\simeq \LP$, $T_{|X}$ is an isomorphism, $X+T(X)$ is complemented in $\LP$, and $d(X, T(X))>0$.
\end{theorem}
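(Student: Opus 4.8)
The plan is to reduce Theorem \ref{movingthm} to a statement about the "location" of a non-perturbation-of-scalar operator on $\LP$, and then to build the desired subspace $X$ by hand using the special structure of subspaces of $\LP$ for $p<2$. First I would record the known facts about $\IXp$: for $\pbounds$ it is the largest ideal in $\opLP$, and by a theorem of Johnson--Maurey--Schechtman--Tzafriri (the structure of ideals on $\LP$) an operator $T$ lies in $\IXp$ precisely when $L_p$ does not factor through $T$, equivalently when there is no subspace $Y\simeq L_p$ on which $T$ is an isomorphism with complemented, complemented-image restriction. So the hypothesis $T-\lambda I\notin\IXp$ for all $\LCN$ says $T$ is genuinely "large" after every scalar shift. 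The goal, in view of Theorem \ref{workthm}, is to promote this largeness to the four simultaneous conditions $X\simeq\LP$, $T_{|X}$ an isomorphism, $X+T(X)$ complemented, and $d(X,T(X))>0$; the shift-invariance of the hypothesis is exactly what lets us also control $d(X,T(X))$, because if $T_{|X}$ were close to a scalar $\lambda$ on $X$ then $T-\lambda I$ would be small on $X$ and one could hope to iterate into $\IXp$, a contradiction.

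The main construction would proceed in several steps. Step A: since $T\notin\IXp$, fix a subspace $Y_0\simeq\LP$ with $T_{|Y_0}$ an isomorphism onto a complemented subspace and $Y_0$ complemented; we want to further arrange that $T$ moves $Y_0$ "away from itself." Step B: using that the hypothesis holds for every $\lambda$, I would argue that we may choose $Y_0$ so that on $Y_0$ the operator $T$ is not within distance, say, $1/2$ of any scalar multiple of the inclusion — otherwise, covering $\CN$ by finitely many small discs and passing to subspaces, one gets a subspace on which $T-\lambda I$ is small for some fixed $\lambda$, and then a perturbation/factorization argument (of the kind used for $\ell_p$ in \cite{DJ}) would put $T-\lambda I$ into $\IXp$. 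Step C: inside such a $Y_0$, I would pass to a further subspace $X\simeq\LP$ on which $T$ behaves like a "disjointly supported" perturbation, i.e., find $X\simeq\LP$, spanned by a sequence of functions with small overlapping supports, such that $T(X)$ is, up to small perturbation, disjointly supported from $X$; this is where the $p<2$ hypothesis enters, since for $p<2$ every normalized weakly null sequence in $\LP$ has a subsequence equivalent either to the $\ell_p$-basis or to the $\ell_2$-basis, and in either case one can extract almost-disjoint blocks and use Kadec--Pe\l czy\'nski / small-perturbation techniques to make $X$ and $T(X)$ almost disjoint, hence at positive distance and with $X+T(X)$ complemented (a disjointly supported copy of $\LP\oplus\LP$-type space is complemented in $\LP$).

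Step D: verify the four conclusions. That $X\simeq\LP$ and $T_{|X}$ is an isomorphism will be automatic from the block construction; $d(X,T(X))>0$ follows from the almost-disjointness together with Step B ruling out the scalar obstruction (if the blocks of $X$ and the blocks of $T(X)$ were not almost disjoint, the only way that could happen for all choices of subspace is that $T$ acts nearly as a scalar, which Step B forbids); and $X+T(X)$ complemented follows because a space spanned by two almost-disjoint $\LP$-copies sits complementedly in $\LP$ via the natural conditional-expectation-type projections onto the supports. The hard part will be Step C together with the interplay with Step B: one must simultaneously control the subspace structure (to keep $X\simeq\LP$ and $X$ complemented), the disjointness (to get $d(X,T(X))>0$ and complementation of $X+T(X)$), and the scalar obstruction (using shift-invariance of the hypothesis), and for $1\le p<2$ this requires the delicate dichotomy for subspaces of $\LP$ and the Kadec--Pe\l czy\'nski dichotomy for normalized sequences — the case $p=1$ being especially touchy because $L_1$ lacks the nice subspace structure of $L_p$, $1<p<2$, so one likely needs the new structural results advertised in the abstract to carry the argument through. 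The restriction to $p<2$ (rather than $p>2$) is presumably because for $p<2$ one can always find the almost-disjoint $\LP$-copy, whereas for $p\ge 2$ a separate argument (treated elsewhere in the paper) is needed.
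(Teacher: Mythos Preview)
Your high-level outline (rule out the scalar obstruction uniformly, then build $X$ so that $X$ and $TX$ are ``far apart'') is the right shape, but Step~C contains a genuine gap: the Kadec--Pe\l czy\'nski sequence dichotomy is the wrong tool, and ``almost disjoint supports on $[0,1]$'' does not yield $d(X,TX)>0$ or complementation of $X+TX$ by conditional expectation. The paper's mechanism for $1<p<2$ is different and sharper. One builds a Haar-like basis $\{y_i\}$ of a copy of $L_p$ so that both $\{y_i\}$ and $\{Ty_i\}$ are small perturbations of block bases $\{a_i\}$, $\{b_i\}$ of the Haar system with $\mathrm{supp}_{\text{Haar}}(a_i)\cup\mathrm{supp}_{\text{Haar}}(b_i)$ pairwise disjoint. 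Letting $A=\overline{\mathrm{span}}\{a_i\}$, Proposition~\ref{respsuppproj} gives a projection $P_A$ onto $A$ that \emph{respects Haar supports}; because $a_i$ is the only basis vector of $A$ with support in $\sigma_i$, this forces $P_A T' a_i=\lambda_i a_i$ for scalars $\lambda_i$ (here $T'$ is a tiny perturbation of $T$ making $T'a_i=b_i$). So the ``diagonal'' part of $T$ is literally scalar on each block. If the off-diagonal part $(I-P_A)T'P_A$ were $L_p$-strictly singular, one pigeonholes the $\lambda_i$ level by level and invokes Gamlen--Gaudet to produce $Z\simeq L_p$ on which $T'$ is within $\varepsilon/2$ of some $\mu I$, contradicting the uniform bound $g(\lambda I-T,X')>\varepsilon$ from Step~B. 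Hence $(I-P_A)TP_A$ preserves a copy of $L_p$, and Lemma~\ref{reductionlemma} then gives all four conclusions at once. Your sketch has no analogue of this support-respecting projection and the resulting diagonal/off-diagonal split, which is precisely what converts the ``not close to a scalar'' hypothesis into the existence of a copy moved off itself.

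Step~B is also underspecified. A compactness-in-$\lambda$ argument does not by itself give a lower bound on $g(\lambda I-T,X')$ valid for \emph{every} $X'\subseteq X$ with $X'\simeq L_p$; the constants can degrade as you pass to subspaces. The paper needs the stabilization Theorem~\ref{thm:stabilization} (any $L_p$-preserving operator restricts, after multiplication by a scalar, to a $K_p$-isomorphism on a $K_p$-complemented copy of $L_p$, with $K_p$ depending only on $p$) to make the constants in Lemmas~\ref{disjointifyLemma}, \ref{MainLemma}, \ref{auxlemma} uniform; this is the deepest ingredient and is not visible in your outline. Finally, for $p=1$ the argument is entirely different: rather than any Kadec--Pe\l czy\'nski-type dichotomy, the paper uses Kalton's decomposition $T=T_a+T_c$ into purely atomic and purely continuous parts, kills $T_c$ on a sub-$L_1$, approximates $T_a$ by a single atom $V$, and then handles $V$ by direct measure-theoretic analysis (Proposition~\ref{atomprop}).
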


\begin{theorem}\label{compactrestrictionthm}
Let $T\in\opLP$, $1\leq p<2$. If $T\in  {\mathcal M}_{L_p}$ then for every $Y\subseteq\LP$, $Y\simeq\LP$, there exists a subspace $X\subset Y$ such that $X$ is complemented in $\LP$, $X\simeq \LP$, and  $T_{|X}$ is a compact operator.
\end{theorem}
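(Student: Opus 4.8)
The plan is to produce $X$ as the closed span of a Haar‑equivalent system inside $Y$ on which $T$ is \emph{summably} small; compactness of $T|_X$ then drops out of a dominated‑convergence argument.

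\textbf{The engine.} The starting point is the following consequence of the hypothesis $T\in\mathcal{M}_{L_p}$: \emph{$T$ is not bounded below on any subspace of $\LP$ isomorphic to $\LP$.} Indeed, if $T|_V$ were an isomorphism with $V\simeq\LP$, then $T(V)\simeq\LP$; since (by the known structure theory of $\LP$) every subspace of $\LP$ isomorphic to $\LP$ contains a copy $W$ of $\LP$ that is complemented in $\LP$, the isomorphism $(T|_V)^{-1}$ on the preimage of $W$, followed by a projection onto $W$, would factor $I_{\LP}$ through $T$, contrary to $T\in\mathcal{M}_{L_p}$. Thus on every copy of $\LP$ inside $\LP$, $T$ has unit vectors with arbitrarily small image.

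\textbf{Reformulation.} Normalize the Haar system $\{\tilde h_{n,i}\}$ of $\LP$ in $L_p$‑norm; its biorthogonal functionals then have norm $1$ in $L_{p'}$ (or in $L_\infty$ when $p=1$), and in particular are uniformly bounded. I claim it suffices to produce a system $\{g_{n,i}\}\subset Y$ equivalent to $\{\tilde h_{n,i}\}$ --- so that $X:=\overline{\vspan}\{g_{n,i}\}\simeq\LP$ --- whose span $X$ is complemented in $\LP$ and which satisfies $\sum_{n,i}\|Tg_{n,i}\|<\infty$ (I would arrange $\|Tg_{n,i}\|<4^{-n}$, so that $\sum_n 2^n4^{-n}<\infty$). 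Granting this, let $g^*_{n,i}$ be the coordinate functionals of $(g_{n,i})$ in $X$; they are uniformly bounded. For a bounded sequence $(f_k)$ in $X$, pass (diagonally) to a subsequence along which $\langle f_k,g^*_{n,i}\rangle$ converges for each $(n,i)$, and put $M=\sup_{k,n,i}|\langle f_k,g^*_{n,i}\rangle|$. Then $\|Tf_k-Tf_{k'}\|\le\sum_{n,i}|\langle f_k-f_{k'},g^*_{n,i}\rangle|\,\|Tg_{n,i}\|$, with summands at most $2M\|Tg_{n,i}\|$ (a summable majorant) and tending to $0$ as $k,k'\to\infty$; hence $(Tf_k)$ is norm‑Cauchy along the subsequence, and $T|_X$ is compact. (This argument uses neither reflexivity nor weak sequential completeness, so it covers $p=1$ as well as $1<p<2$.)

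\textbf{The construction, and the main obstacle.} It remains to build such a system, and this is the heart of the matter. I would do it by recursion along the dyadic tree: to a node $\nu$ one attaches a ``cell'' $C_\nu\subseteq Y$ with $C_\nu\simeq\LP$ and $\|T|_{C_\nu}\|<4^{-|\nu|}$; one takes $g_\nu$ to be the top Haar function of a fixed Haar decomposition of $C_\nu$ (so $\|Tg_\nu\|\lesssim 4^{-|\nu|}$), lets the two ``halves'' of $C_\nu$ be copies of $\LP$ inside $C_\nu$, and --- the key step --- passes inside each half to a yet smaller copy $C'$ of $\LP$ with $\|T|_{C'}\|$ even smaller; these become the child cells. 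Two points require real work. (i) \emph{Existence of these $T$-small subcopies.} Here one feeds ``$T$ is not bounded below on any copy of $\LP$'' into the Kadec--Pe\l czy\'nski machinery: a normalized $T$-almost-null sequence in a copy of $\LP$ has, after passing to a subsequence, either an $\ell_2$- or an $\ell_p$-structure, and such ``thin'' directions must then be amplified --- via a change of density and a conditional-expectation construction --- into an \emph{entire} subcopy of $\LP$ on which $T$ is small and whose span is complemented in $\LP$; this is where the hypothesis $1\le p<2$ is genuinely used. (ii) \emph{That $\{g_\nu\}$ is equivalent to the Haar system with complemented span.} I would interleave the selection with a gliding-hump perturbation replacing the cells by dyadically nested, nearly disjointly supported copies $\LP(A_\nu)$, altering each $\|Tg_\nu\|$ by less than its remaining error budget, so that the level-$n$ conditional expectations are uniformly bounded projections onto the spans of initial segments, whence $X$ is complemented in $\LP$. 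I expect step (i) --- manufacturing $T$-small \emph{complemented} copies of $\LP$ nested inside prescribed copies of $\LP$ out of the mere failure of boundedness below --- to be the principal difficulty.
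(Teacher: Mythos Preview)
Your architecture --- build inside $Y$ a system equivalent to the Haar basis on which $T$ has summably small norms, then read off compactness --- coincides with the paper's, and you have correctly located the difficulty at your step~(i). But the mechanism you propose for~(i), Kadec--Pe\l czy\'nski amplification, does not do the job: the dichotomy hands you only an $\ell_p$- or $\ell_2$-sequence on which $T$ is small, and ``change of density and conditional expectation'' does not turn that into a whole $T$-small copy of $L_p$. The statement you want at~(i) is exactly the paper's Lemma~\ref{smallnormLemma}, and the paper explicitly remarks that its proof ``depends heavily on Theorem~\ref{thm:stabilization}, which is the deepest result of this paper''; the analogous statement is not known for $p>2$.

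The idea you are missing is the notion of a \emph{sign embedding}: $T$ is one if, for some set $S$ of positive measure and some $\delta>0$, $\|Tf\|\ge\delta$ whenever $\int f=0$ and $|f|={\bf 1}_S$. The paper's route is: (a)~the stabilization Theorem~\ref{thm:stabilization} (this is the hard part, proved in the Appendix) shows that a sign embedding on $L_p$, $1\le p<2$, preserves a complemented copy of $L_p$, so $T\in\mathcal{M}_{L_p}$ forces $T$ to be a non-sign-embedding; (b)~once that is known, the Haar-like system is built directly in one pass (Lemma~\ref{compactlemma}): at each node one has a set $A_n$, and the failure of sign embedding supplies a mean-zero $x_n$ with $|x_n|={\bf 1}_{A_n}$ and $\|Tx_n\|$ as small as desired, whose positive and negative parts split $A_n$ into its two children of equal measure. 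The resulting $(x_n)$ is \emph{isometrically} equivalent to the Haar basis, its closed span is $L_p$ of a sub-$\sigma$-algebra and hence norm-one complemented, and compactness follows because the basis is monotone. With the stabilization theorem in hand, both your steps~(i) and~(ii) dissolve: no recursive cell-shrinking, no gliding hump.
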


Notice that Theorem \ref{compactrestrictionthm} implies that for $1\leq p <2$, $ {\mathcal M}_{L_p}$ is closed under addition and hence is the largest ideal in $\opLP$. It follows by duality that for $2<p<\infty$,  $ {\mathcal M}_{L_p}$ is closed under addition as well and hence is the largest ideal in $\opLP$. This duality argument is needed because Theorem \ref{compactrestrictionthm} is false for $p>2$. To see that Theorem \ref{compactrestrictionthm} is false for $p>2$ one can consider $T = JI_{p,2}$ where $I_{p,2}$ is the identity from $\LP$ into $L_2$ and $J$ is an isometric embedding  from $L_2$ into $\LP$.

In order to prove Theorem \ref{movingthm} for $1<p<2$, it was necessary to improve   \cite[Proposition 9.11]{JMST} for the spaces $L_p$, $1<p<2$, and the improvement is of independent interest.  In Theorem \ref{thm:stabilization} we show that for a natural equivalent norm on $L_p$, $1<p<2$, if $T$ is an operator on $L_p$ which is an isomorphism on a copy of $L_p$, then some multiple of $T$ is almost an isometry on an isometric copy of $L_p$.  The proof of Theorem \ref{thm:stabilization}, which can be read independently from the rest of this paper, is the most difficult argument in this paper and we will postpone it till the Appendix.

Using   Theorem \ref{movingthm} and Theorem \ref{compactrestrictionthm} it is easy to show that Conjecture \ref{conj} also holds for $\LP$, $1\leq p<2$. It follows by duality that  Conjecture \ref{conj} also holds for $\LP$, $2<p<\infty$.

\begin{theorem}
Let $\mathcal{M}$ be the largest ideal in $\opLP$, $1\leq p<\infty$. An operator $T\in\opLP$ is a commutator if and only if  $\,\,T-\lambda I\notin \mathcal{M}$ for any $\lambda\neq 0$.
\end{theorem}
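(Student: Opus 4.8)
The proof of the final theorem is a matter of assembling the pieces already prepared. The plan is to verify Conjecture \ref{conj} for $\LP$, $1\leq p<\infty$, by checking \textbf{Step 1} and \textbf{Step 2} separately, using Theorem \ref{movingthm} and Theorem \ref{compactrestrictionthm} for the range $1\leq p<2$, and then a duality argument for $2<p<\infty$ (the case $p=2$ being the Brown--Pearcy theorem).

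First I would dispose of the ``only if'' direction, which is the Wintner obstruction recalled in the introduction: if $T=\lambda I+S$ with $\lambda\neq 0$ and $S\in\mathcal M$, then $\lambda^{-1}T=I+\lambda^{-1}S$ is the identity modulo a proper (norm-closed) ideal, hence is not a commutator in the quotient Banach algebra $\opLP/\mathcal M$, hence $T$ is not a commutator in $\opLP$. This uses only that $\mathcal M={\mathcal M}_{L_p}$ is the largest ideal, which the excerpt records as a consequence of Theorem \ref{compactrestrictionthm} together with the stated duality argument.

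For the ``if'' direction with $1\leq p<2$: suppose $T-\lambda I\notin\mathcal M$ for every $\lambda\neq 0$. There are two cases. If also $T\notin\mathcal M$ (i.e. $T-\lambda I\notin\mathcal M$ for \emph{all} $\LCN$, including $\lambda=0$), then Theorem \ref{movingthm} produces a subspace $X\subset\LP$ with $X\simeq\LP$, $T_{|X}$ an isomorphism, $X+T(X)$ complemented, and $d(X,T(X))>0$; since $\LP\simeq\sumX$ (the $L_p$ space admits a Pe\l czy\'nski decomposition), Theorem \ref{workthm} applies and $T$ is a commutator. If instead $T\in\mathcal M$, then by hypothesis $T$ is itself in the largest ideal, and we invoke \textbf{Step 1}: every operator in $\mathcal M$ is a commutator. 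The justification of \textbf{Step 1} runs through Theorem \ref{compactrestrictionthm}, which says that any $T\in{\mathcal M}_{L_p}$ restricts to a compact operator on a complemented copy $X\simeq\LP$ inside any prescribed copy of $\LP$; combined with the Pe\l czy\'nski decomposition and the ``small on a complemented subcopy'' technique described after Conjecture \ref{conj} (the method used in \cite{Dosev}, \cite{Apostol_lp} and \cite{DJ}), this yields that $T$ is a commutator. This settles $1\leq p<2$.

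For $2<p<\infty$ I would pass to adjoints. If $T\in\opLP$ satisfies $T-\lambda I\notin\mathcal M_{\LP}$ for all $\lambda\neq 0$, then, since $\mathcal M_{\LP}$ is the largest ideal and the largest ideal is self-dual under the $L_p$--$L_{p'}$ duality (where $1/p+1/p'=1$, so $1<p'<2$), the adjoint $T^*\in\mathcal L(L_{p'})$ satisfies $T^*-\lambda I\notin\mathcal M_{L_{p'}}$ for all $\lambda\neq 0$; by the case already proved, $T^*=AB-BA$ for some $A,B\in\mathcal L(L_{p'})$, and one then checks that $T$ is a commutator on $\LP$ by a standard argument producing preadjoints (here $\LP$ is complemented in $L_{p'}^{*}$ and one transfers the commutator back, or more simply one runs Theorems \ref{movingthm}/\ref{compactrestrictionthm} in their dualized forms as the excerpt indicates). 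The main obstacle in this whole argument is not in the assembly — it is entirely in the two structural theorems, Theorem \ref{movingthm} and Theorem \ref{compactrestrictionthm} (and behind Theorem \ref{movingthm}, the stabilization result Theorem \ref{thm:stabilization}); once those are in hand the deduction of the commutator classification is the routine bookkeeping sketched above, and the only subtlety to be careful about is the case split on whether or not $T$ itself lies in $\mathcal M$.
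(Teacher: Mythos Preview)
Your assembly is correct and matches the paper's own proof: the same case split on whether $T\in\mathcal M$, the same appeals to Theorem \ref{compactrestrictionthm} (plus \cite[Corollary 12]{Dosev}) and to Theorem \ref{movingthm} plus Theorem \ref{workthm}, and the same duality reduction for $2<p<\infty$. One minor simplification: since $L_p$ is reflexive for $1<p<\infty$, the duality step is immediate---if $T^*=AB-BA$ in $\mathcal L(L_{p'})$ then $T=B^*A^*-A^*B^*$ in $\opLP$---so no preadjoint or complementation argument is needed.
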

\begin{proof}
As we already mention, we only need to consider the case $1\leq p<2$ and the case $2<p<\infty$ will follow by a duality argument.\\
If $T$ is a commutator, from the remarks we made in the introduction it follows that $T-\lambda I$ cannot be in $\mathcal{M}$ for any $\lambda\neq 0$. For proving the other direction we have to consider two cases:\\
{\bf{Case I.}} If $T\in \mathcal{M}$ ($\lambda = 0$), we first apply Theorem \ref{compactrestrictionthm} to obtain a complemented subspace $X\subset \LP$ such that $T_{|X}$ is a compact operator and then apply  \cite[Corollary 12]{Dosev} which gives us the desired result.\\
{\bf{Case II.}} If $\,\,T-\lambda I\notin \mathcal{M}$  for any $\LCN$ we are in position to apply Theorem \ref{movingthm}, which combined with Theorem \ref{workthm} imply that $T$ is a commutator.
\end{proof}

The rest of this paper is devoted to the proofs of Theorems \ref{movingthm} and \ref{compactrestrictionthm}. We consider the case $L_1$ separately since some of the ideas and methods used in this case are quite different from those used for the case  $L_p$, $1 < p<\infty$.


\section{Notation and basic results}
Throughout this manuscript, if $\X$ is a Banach space and $X\subseteq\X$ is complemented, by $P_X$ we denote a projection from $\X$ onto $X$.
For any two subspaces (possibly not closed) $X$ and $Y$  of a Banach space $\Z$ let
$$
d(X,Y) = \inf \{\|x-y\| : x\in S_{X},\, y\in Y\}.
$$
A well known consequence of the open mapping theorem is that for any two closed subspaces $X$ and $Y$ of $\Z$, if $X\cap Y = \{0\}$ then $X+Y$ is a closed subspace of $\Z$ if and only if $d(X,Y)>0$.
Note also that $2d(X,Y)\geq d(Y,X) \geq 1/2d(X,Y)$, thus $d(X,Y)$ and $d(Y,X)$ are equivalent up to a constant factor of $2$. The following proposition was proved in \cite{DJ} and will allow us later to consider only isomorphisms instead of arbitrary operators on $\LP$.

\begin{proposition}[{\cite[Proposition 2.1]{DJ}}]\label{distanceprop}
Let $\X$ be a Banach space and $T\in\opX$ be  such that there exists a
subspace $Y\subset\X$ for which $T$ is an isomorphism on $Y$ and $d(Y,TY)>0$. Then for every $\LCN$, $(T-\lambda I)_{|Y}$ is an isomorphism and $d(Y,(T-\lambda I)Y)>0$.
\end{proposition}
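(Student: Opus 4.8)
The plan is a short, self-contained estimate requiring no structural information about $\X$. First I would fix notation: let $c>0$ be a lower isomorphism bound for $T_{|Y}$, so $\|Ty\|\ge c\|y\|$ for all $y\in Y$, and set $\delta:=d(Y,TY)>0$. The case $\lambda=0$ is precisely the hypothesis, so from now on assume $\lambda\neq 0$.

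Step 1 is to show $(T-\lambda I)_{|Y}$ is bounded below. For $y\in Y$ with $\|y\|=1$ there are two competing estimates. The triangle inequality gives $\|(T-\lambda I)y\|=\|Ty-\lambda y\|\ge\|Ty\|-|\lambda|\ge c-|\lambda|$. On the other hand, since $\lambda^{-1}Ty\in TY$ and $y\in S_Y$, the distance hypothesis gives $\|(T-\lambda I)y\|=|\lambda|\,\|\lambda^{-1}Ty-y\|\ge|\lambda|\,\delta$. The larger of $c-|\lambda|$ and $|\lambda|\delta$ is always at least $m:=\frac{c\delta}{1+\delta}>0$ (the two expressions agree at $|\lambda|=\frac{c}{1+\delta}$, where each equals $m$), so $\|(T-\lambda I)y\|\ge m\|y\|$ for every $y\in Y$. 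Since $Y$ is closed, $(T-\lambda I)$ thus maps $Y$ isomorphically onto a closed subspace. Note the bound $m$ is independent of $\lambda$.

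Step 2 is to bound $d(Y,(T-\lambda I)Y)$ from below. Fix $y\in S_Y$ and $y'\in Y$ and rewrite $\|y-(T-\lambda I)y'\|=\|(y+\lambda y')-Ty'\|$, observing that $y+\lambda y'\in Y$ while $Ty'\in TY$. Two estimates again: the distance hypothesis gives $\|(y+\lambda y')-Ty'\|\ge\delta\|y+\lambda y'\|$, and the triangle inequality gives $\|(y+\lambda y')-Ty'\|\ge\|Ty'\|-\|y+\lambda y'\|\ge c\|y'\|-\|y+\lambda y'\|$. Forming the convex combination with weight $\frac{1}{1+\delta}$ on the first estimate cancels the (undesirable, possibly large) term $\|y+\lambda y'\|$ exactly and leaves $\|y-(T-\lambda I)y'\|\ge m\|y'\|$. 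Combining this with the complementary bound $\|y-(T-\lambda I)y'\|\ge 1-\|T-\lambda I\|\,\|y'\|$ (useful when $\|y'\|$ is small) and taking the maximum, I get $\|y-(T-\lambda I)y'\|\ge\frac{m}{m+\|T-\lambda I\|}>0$, uniformly over $y\in S_Y$ and $y'\in Y$; hence $d(Y,(T-\lambda I)Y)\ge\frac{m}{m+\|T-\lambda I\|}>0$.

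There is no serious obstacle; the only point to watch is that the resulting lower bound for $d(Y,(T-\lambda I)Y)$ deteriorates as $|\lambda|\to\infty$ — this is already unavoidable for two ``orthogonal'' copies of $\ell_2$ inside $\ell_2$ — but since the proposition only claims positivity for each fixed $\lambda$, this is harmless.
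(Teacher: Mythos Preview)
Your argument is correct. The paper does not actually prove this proposition; it is quoted verbatim from \cite{DJ} without proof. The only hint the paper gives about the original argument is in the proof of Proposition~\ref{smallperturbationProp}, where it says ``As in \cite[Proposition 2.1]{DJ}, we have to show that there exists a constant $c>0$ such that for all $y\in S_Y$, $d((T+K)y,Y)>c$,'' and then uses the crude triangle-inequality estimate $\|Ty-z\|-\|Ky\|$. This indicates that the proof in \cite{DJ} also proceeds by a short elementary estimate of the same flavor as yours, though it bounds $d((T-\lambda I)Y,Y)$ rather than $d(Y,(T-\lambda I)Y)$ (the two being comparable within a factor of $2$, as the paper notes). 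Your two-regime argument in Step~1 (triangle inequality for small $|\lambda|$, distance hypothesis for large $|\lambda|$) and the convex-combination trick in Step~2 are a clean way to package the estimate and even yield an explicit uniform lower bound $m=c\delta/(1+\delta)$ for the isomorphism constant; this uniformity in $\lambda$ is not asserted in the statement but is a pleasant byproduct.
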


We will also need a result similar to Proposition \ref{distanceprop}, where instead of adding a multiple of the identity we want to add an arbitrary operator. Obviously that cannot be done in general, but if we assume that the operator we add has a sufficiently small norm we can derive the desired conclusion.
\begin{proposition}\label{smallperturbationProp}
Let $T\in\opX$ and let $Y\subset \X$ be such that $T$ is an isomorphism on $Y$, $Y\simeq \X$, $d(Y,TY)>0$, and $Y+TY$ is a complemented subspace  of $\X$ isomorphic to $\X$. Then there exists an $\varepsilon >0$, depending only on $d(Y,TY)$, the norm of the projection onto $Y+TY$, and  $\|T_{|Y}^{-1}\|$  such that if $K\in \opX$ satisfies $\|K_{|Y}\|<\varepsilon$ then $d(Y,(T+K)Y)>0$ and $Y+(T+K)Y$ is a complemented subspace  of $\X$ isomorphic to $\X$.
\end{proposition}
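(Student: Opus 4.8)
The plan is to treat $K$ as a small perturbation of $T$ on $Y$ and propagate two facts: that $T+K$ remains a bounded-below isomorphism on $Y$ with control on the inverse, and that the sum $Y+(T+K)Y$ stays close to $Y+TY$ in an appropriate sense, so that the projection onto $Y+TY$ can be corrected to a projection onto $Y+(T+K)Y$. First I would fix the data: write $c = d(Y,TY) > 0$, let $\Pi$ be a projection from $\X$ onto $Y+TY$ with $\|\Pi\| = \lambda_0$, and set $m = \|T_{|Y}^{-1}\|^{-1}$, so that $\|Ty\|\geq m\|y\|$ for $y\in Y$. All the $\varepsilon$'s below will be chosen in terms of $c$, $\lambda_0$, $m$, and $\|T\|$ (note $\|T\|$ is controlled once $m$ and $\|\Pi\|$ and $c$ are fixed only on $Y$; in fact only $\|T_{|Y}\|$ matters, which is itself bounded in terms of the stated quantities together with the ambient norm, so one should really list $\|T_{|Y}\|$ among the parameters as well — this is a cosmetic point).

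Step 1: $T+K$ is an isomorphism on $Y$. If $\|K_{|Y}\| < m/2$ then for $y\in Y$ we have $\|(T+K)y\|\geq \|Ty\| - \|Ky\| \geq (m/2)\|y\|$, so $(T+K)_{|Y}$ is bounded below, hence an isomorphism onto its (closed) image, with $\|((T+K)_{|Y})^{-1}\|\leq 2/m$.

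Step 2: $d(Y,(T+K)Y) > 0$. The natural route is to show that the map $R\colon Y+TY \to \X$ defined by $R(y + Ty') = y + (T+K)y'$ (well-defined because the sum $Y+TY$ is direct, since $d(Y,TY)>0$ forces $Y\cap TY=\{0\}$) is a small perturbation of the identity on $Y+TY$. Indeed $R - \mathrm{Id}_{Y+TY}$ sends $y+Ty'\mapsto Ky'$, and one bounds $\|y'\|$ in terms of $\|y+Ty'\|$ using $d(Y,TY)=c$: from $\|y + Ty'\|\geq c\|Ty'\|\geq c m\|y'\|$ we get $\|Ky'\|\leq (\|K_{|Y}\|/(cm))\|y+Ty'\|$. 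Hence if $\|K_{|Y}\|$ is small enough that this operator norm is $< 1/2$, then $R$ is an isomorphism onto its image, and $(T+K)Y = R(TY)$ while $Y = R(Y)$. Since $R$ is an isomorphism onto $R(Y+TY)$ (a subspace containing both $Y$ and $(T+K)Y$) with $\|R\|,\|R^{-1}\|$ controlled, and $d(Y,TY)=c$, we get $d(R(Y),R(TY))\geq c/\|R\|\|R^{-1}\|$, i.e. $d(Y,(T+K)Y)$ is bounded below by a positive constant depending only on $c$, $m$, $\|K_{|Y}\|$. In particular $Y+(T+K)Y$ is closed.

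Step 3: $Y+(T+K)Y$ is complemented and isomorphic to $\X$. Extend the perturbation: the map $R$ above is defined on $Y+TY$; since $Y+TY$ is complemented via $\Pi$, consider $\widetilde R = \mathrm{Id}_\X + (R - \mathrm{Id}_{Y+TY})\Pi$, a perturbation of the identity on all of $\X$ by an operator of norm at most $\lambda_0 \cdot (\|K_{|Y}\|/(cm))$. If $\|K_{|Y}\|$ is small enough that this is $< 1$, then $\widetilde R$ is an automorphism of $\X$, and $\widetilde R(Y+TY) = R(Y+TY) \supseteq Y + (T+K)Y$; moreover $\widetilde R$ maps $Y\oplus TY$ onto $R(Y)\oplus R(TY) = Y \oplus (T+K)Y$ (equality of subspaces, using that $R$ restricted to $Y$ is the identity and $R(TY)=(T+K)Y$), so in fact $\widetilde R (Y+TY) = Y+(T+K)Y$ exactly. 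Therefore $Y+(T+K)Y = \widetilde R(Y+TY)$ is the image of a complemented subspace under an automorphism, hence complemented (with projection $\widetilde R\, \Pi\, \widetilde R^{-1}$), and isomorphic to $Y+TY \simeq \X$. Taking $\varepsilon$ to be the minimum of the finitely many thresholds from Steps 1–3 (each an explicit function of $c$, $\lambda_0$, $m$ — and $\|T_{|Y}\|$) completes the proof.

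The only genuinely delicate point is Step 2 / Step 3: one must be careful that the perturbed map $R$ is literally a bijection between the algebraic sums $Y\oplus TY$ and $Y\oplus(T+K)Y$, not merely an approximate one, and that the extension $\widetilde R$ to $\X$ does not accidentally enlarge or shrink the image of $Y+TY$. Keeping track of "equality of subspaces" versus "isomorphism" is where the bookkeeping has to be precise; everything else is a routine Neumann-series estimate.
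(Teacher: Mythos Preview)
Your proof is correct and follows essentially the same route as the paper: the paper defines exactly your map $R$ on $Y+TY$ (calling it $S$), extends it to an automorphism of $\X$ via $SP+I-P$ (your $\widetilde R$), and produces the new projection as $\widetilde R\,\Pi\,\widetilde R^{-1}$. The only cosmetic differences are that the paper verifies $d(Y,(T+K)Y)>0$ by a direct triangle-inequality estimate rather than by transporting $d(Y,TY)$ through $R$, and that your worry about needing $\|T_{|Y}\|$ as an extra parameter is unfounded---your own bound $\|R-\mathrm{Id}\|\le \|K_{|Y}\|/(cm)$ (up to a harmless factor of $2$ coming from $d(Y,TY)$ versus $d(TY,Y)$) already involves only $c$, $m$, and $\lambda_0$.
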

\begin{proof}
First we show that if $\varepsilon$ is sufficiently small then $d(Y,(T+K)Y)>0$, provided $\|K\|<\varepsilon$.
As in \cite[Proposition 2.1]{DJ}, we have to show that there exists a constant $c>0$ such that  for all $y \in S_Y$, $d((T+K)y,Y)>c$. From $d(TY,Y)>0$ it follows that there exists a constant $C$, such that for all $y \in S_Y$, $d(Ty,Y)>C$. If $\|K_{|Y}\|<\frac{C}{2}$ then
$$
\|(T+K)y-z\|\geq \|Ty-z\| - \|Ky\|\geq d(Ty,Y) - \frac{C}{2}\geq \frac{C}{2}
$$
for all $z\in Y$  hence $d((T+K)Y,Y)>0$. \\
Let $P$ be the projection onto $Y+TY$. To show that $Y+(T+K)Y$ is complemented in $\X$ we first define an isomorphism $S : Y+TY\to Y+(T+K)Y$ by $S(y+Tz) = y + (T+K)z$ for every $y,z\in Y$. From the definition of
$S$ we have that $\|S-I\|\leq C(Y,T)\|K_{|Y}\|$ (where $\displaystyle C(Y,T) = \frac{\|P\|\|T_{|Y}^{-1}\|}{d(TY,Y)}$), hence if $\|K_{|Y}\|$ is small enough the operator $R=SP+I-P$ is an isomorphism on $\X$. Now it is not hard to see that $RPR^{-1}$ is a projection onto $Y+(T+K)Y$.
\end{proof}


\section{Operators on $\LP$, $\strictpbounds$}

Recall (see (\ref {maxideal}) in the Introduction) that if $\X$ is a Banach space, $ \IX = \{T\in\opX\,:\, I_{\X}\,\, \textrm{does not factor through}\,\, T \}$, then $T \not\in  \IX $ if and only if there exists a subspace $X$ of $\X$ so that $T_{|X}$ is an isomorphism, $TX$ is complemented in $\X$, and $TX\simeq\X$.

As we have already mentioned in the Introduction, the set $\IX$ is the largest ideal in $\opX$ if and only if it is closed under addition. Using the fact that if $p=1$ then $\IXone$ coincides with the ideal of non-$E$ operators, defined in \cite{Enflo_Starbird}, and if $\strictpbounds$ then $\IXp$ coincides with the ideal of non-$A$ operators, defined in \cite{JMST}, it is clear that $\IX$ is in fact the largest ideal in those spaces. This fast, as we already mentioned, follows from Theorem \ref{compactrestrictionthm} as well. For more detailed discussion of the $E$ and $A$ operators we refer the reader to \cite{Enflo_Starbird} and \cite[Section 9]{JMST}  and let us also mention that  we are not going to use any of the properties of the $E$ or $A$  operators  and so do not repeat their definitions here.

In this section we mainly consider operators $T: \LP \to \LP$, $\pbounds$, that preserve a complemented copy of $\LP$, that is, there exists a complemented subspace  $X\subseteq\LP$, $X\simeq\LP$ such that $T_{|X}$ is an isomorphism. The fact that we can automatically take a complemented subspace isomorphic to $\LP$ instead of just a subspace isomorphic to $\LP$ follows from \cite[Theorem 9.1]{JMST} in the case $p>1$ and \cite[Theorem 1.1]{Rosenthal_L1} in the case $p=1$. From the definition of $\IX$ it is easy to see that $T\notin \IXp$ if and only if $T$ maps a copy of $\LP$ isomorphiclly onto a complemented copy of $L_p$.

Also, recall that an operator $T:\X\to \Y$ is called $Z$-strictly singular provided the restriction of $T$ to any subspace of $\X$, isomorphic to $Z$, is not an isomorphism. From the remarks above, it is clear that the class of operators from $\LP$ to $\LP$ that do not preserve a complemented copy of  $\LP$ coincides with the class of $\LP$-strictly singular operators, hence the class of $\LP$-strictly singular operators is the largest ideal in $\opLP$.
\begin{definition}\label{HaarBasis}
The sequence of functions $\HaarXX$ defined by $h_{0,0}(t)\equiv 1$ and, for
$n = 0, 1,\ldots$ and $i=1,2,\ldots, 2^n$,
$$
h_{n,i}(t) = \left\{ \begin{array}{ll}
1 & \textrm{if}\,\, t\in ((2i-2)2^{-(n+1)}, (2i-1)2^{-(n+1)})\\
-1
& \textrm{if}\,\,t\in ((2i-1)2^{-(n+1)}, 2i2^{-(n+1)})\\
0 & \textrm{otherwise}
\end{array} \right.
$$
is called the Haar system on $[0,1]$.
\end{definition}
The Haar system, in its natural order, is an unconditional monotone basis of $\LP [0,1]$
for every $\strictpbounds$ (cf. \cite[p.3, p.19]{LT}) and we denote by $C_p$ the
unconditional basis constant of the Haar system. As usual, by
$\{r_n\}_{n=0}^{\infty}$ we denote the Rademacher sequence on $[0,1]$, defined
by $r_n = \sum_{i=1}^{2^{n}}h_{n,i}$.
\begin{definition}\label{def:squarefunction}
Let $\{x_i\}_{i=1}^{\infty}$ be an unconditional basis for $\LP$.
For $x = \sum_{i=1}^{\infty} a_ix_i$, the square function of $x$ with respect to $\{x_i\}_{i=1}^{\infty}$ is defined by
$$
S(x)  = \left ( \sum_{i=1}^{\infty}a_i^2x_i^2\right )^{\frac{1}{2}}.
$$
\end{definition}

The following proposition is well known. 
We include its proof here for completeness.
\begin{proposition}\label{respsuppproj}
Let $\seq{a}$ be a block basis of the Haar basis for $\LP$, $\strictpbounds$, such that $A = \overline{\mathrm{span}} \{a_i\,:\, i=1,2,\ldots\}$ is a complemented subspace of $\LP$ via a projection $P$. Then there exists a projection onto $A$ that respects supports with respect to the Haar basis and whose norm depends on $p$ and $\|P\|$ only.
\end{proposition}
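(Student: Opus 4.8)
The statement to prove is Proposition~\ref{respsuppproj}: given a block basis $\{a_i\}$ of the Haar basis whose closed span $A$ is complemented in $L_p$ via a projection $P$, there is a projection onto $A$ that respects supports with respect to the Haar basis, with norm controlled by $p$ and $\|P\|$. Here ``respects supports'' should mean that the range of the projection is still $A$ (or, more usefully, that the projection has a matrix representation that is compatible with the dyadic support structure: applying the natural conditional expectation / support truncation to the projection still leaves a bounded projection onto $A$). The natural approach is to average $P$ against the action of the group of measure-preserving transformations of $[0,1]$ that permute the dyadic intervals at each level (equivalently, the sign changes $\e_{n,i}\mapsto \pm1$ on the Haar functions, combined with the relevant interval swaps), and to use unconditionality of the Haar basis together with the fact that a block basis of the Haar system is unchanged (up to signs/permutation of the blocks) under such symmetries.

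\textbf{Step 1: set up the symmetry group.} For each level $n$ let $G_n$ be the group of permutations of the $2^n$ dyadic intervals of length $2^{-n}$ that are induced by measure-preserving bijections of $[0,1]$ carrying dyadic intervals to dyadic intervals; let $G$ be the (inverse-limit-type) group generated by all these, acting on $L_p$ by the induced isometries $U_g$. Each $U_g$ permutes the Haar functions up to sign, hence carries the block basis $\{a_i\}$ to another block basis obtained by permuting and sign-changing the $a_i$'s (this uses that the blocks are consecutive in the Haar order only on the level structure — one must be slightly careful and may need to restrict $G$ to the subgroup that preserves the particular block decomposition, or argue that only finitely many ``block boundaries'' matter). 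Consequently $U_g A = A$ and $U_g^{-1} P U_g$ is again a bounded projection onto $A$ with the same norm as $P$.

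\textbf{Step 2: average.} Define $\tilde P = \int_G U_g^{-1} P U_g \, d g$ using an invariant mean on (a suitable amenable, e.g. locally finite, version of) $G$; since $L_p$, $1<p<\infty$, is reflexive the average exists as a weak-operator integral and $\|\tilde P\|\le \|P\|$. By construction $\tilde P$ is a projection onto $A$ (it fixes $A$ pointwise because each $U_g^{-1}PU_g$ does, and its range is inside $A$) and it commutes with every $U_g$. The key point is then that an operator commuting with all the $U_g$ must ``respect supports'': if we write $\tilde P$ in the Haar basis, commuting with the sign-change isometries forces the Haar-matrix of $\tilde P$ to be ``diagonal'' with respect to the nested dyadic algebra in the appropriate sense, so that truncating $\tilde P$ to any union of dyadic supports still gives a bounded operator; then one extracts from $\tilde P$ the genuine support-respecting projection onto $A$, controlling the norm loss by $C_p$ (the unconditional constant of the Haar basis) via a standard unconditionality/square-function estimate.

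\textbf{The main obstacle.} The delicate part is Step 1 together with the precise meaning of ``respects supports'': a block basis of the Haar system is \emph{not} invariant under the full symmetry group, because the blocks are defined by consecutive intervals of indices, and arbitrary dyadic permutations destroy the block structure. The right fix is to use only the subgroup of symmetries that act \emph{within} each block's support — i.e., permute/sign-change the Haar functions appearing inside a fixed $a_i$ and reshuffle among different $a_j$ with identical ``shape'', which is enough to force the averaged projection to have a block-diagonal Haar matrix supported on the dyadic supports of the $a_i$'s — and then to observe that such a block-diagonal operator fixing $A$ automatically factors as $a_i\mapsto a_i$ extended by a support-respecting formula. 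Making this averaging argument precise, and checking that the resulting operator is simultaneously (a) a projection, (b) onto $A$, and (c) support-respecting with norm $\le C(p,\|P\|)$, is where all the work lies; everything else is routine unconditionality bookkeeping.
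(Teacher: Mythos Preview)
Your instinct to average is correct, but the machinery you are reaching for (dyadic permutation groups, invariant means, reflexivity) is far heavier than what is needed, and the ``main obstacle'' you flag is a symptom of having chosen the wrong group. The paper's argument is the minimal version of your idea and fits in five lines: let $P_i$ be the natural Haar projection onto $X_i=\overline{\mathrm{span}}\{h_{k,l}:(k,l)\in\sigma_i\}$, where $\sigma_i$ is the Haar support of $a_i$, and set
\[
P_A=\sum_i P_iPP_i.
\]
This visibly respects supports (it commutes with every $P_j$) and is a projection onto $A$; boundedness is the single estimate
\[
\|P_A\|=\Bigl\|\,\mathbb{E}_{\varepsilon}\sum_{i,j}\varepsilon_i\varepsilon_j P_iPP_j\Bigr\|
\le \mathbb{E}_{\varepsilon}\Bigl\|\Bigl(\sum_i \varepsilon_i P_i\Bigr)P\Bigl(\sum_j \varepsilon_j P_j\Bigr)\Bigr\|
\le C_p^2\|P\|,
\]
using only that $\sum_i \varepsilon_i P_i$ is a Haar multiplier with coefficients in $\{-1,0,1\}$, hence bounded by the unconditional constant $C_p$.

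In other words, the only ``group'' one needs is $\{-1,1\}^{\mathbb N}$ acting by \emph{block-wise} sign changes $a_i\mapsto \varepsilon_i a_i$ (equivalently, multiplying every Haar function in $\sigma_i$ by the common sign $\varepsilon_i$). This automatically preserves $A$ and each $X_i$, so none of your concerns about permutations destroying the block structure arise. Your proposed fix---averaging over symmetries that ``permute/sign-change the Haar functions appearing inside a fixed $a_i$''---goes in the wrong direction: flipping individual Haar functions inside a block does \emph{not} fix $a_i$ in general, whereas flipping the whole block does. Once you restrict to block-wise signs, the weak-operator integral, amenability, and the subsequent ``extract the support-respecting projection from $\tilde P$'' step all collapse into the explicit formula $P_A=\sum_i P_iPP_i$ above.
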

\begin{proof}
Define $\sigma_i = \{(k,l)\,:\,h_{k,l}\in \supp (a_i)\}$, where the support is taken with respect to the Haar basis, and denote $X_i = \overline{\mathrm{span}} \{h_{k,l}\, :\, (k,l)\in\sigma_i\}$. It is clear that all spaces $X_i$ are $C_p$  complemented in $\LP$, via the natural projections $P_i$, as a span of subsequence of the Haar basis.
Consider the operator $P_A = \sum_iP_iPP_i$. Provided it is bounded, it is easy to check that $P_A$ is a projection onto $A$ that respect supports. In order to show that $P_A$ is bounded consider the formal sum
$$
P = \sum_{i,j} P_iPP_j.
$$
A simple computation shows that
\begin{equation}
\|P_A\| = \|\mathbb{E}\sum_{i,j} \varepsilon_i\varepsilon_j P_iPP_j\|\leq
\mathbb{E}\|\sum_{i,j} \varepsilon_i\varepsilon_j P_iPP_j\|\leq C_p^2\|P\|,
\end{equation}
where $\varepsilon_i$ is a Rademacher sequence on $[0,1]$, which finishes the proof.
\end{proof}

The following theorem is the main result of this section. We will postpone its proof till the end since the ideas for proving it deviate from the general ideas of this section and the proof as well as the result are of  independent interest.
Recall \cite{Rosenthal_L1} that an operator $T$ on $L_p$, $1\leq p <\infty$, is called a sign embedding provided there is a set $S$ of positive measure and $\delta>0$ so that $\|Tf\| \ge \delta$ whenever $\int f \,d\mu =0$ and $|f|=1_S$ almost everywhere.


\begin{theorem} \label{thm:stabilization} For each $1<p<2$ there is a
constant $K_p$ such that if $T$ is a sign embedding operator from $L_p[0,1]$ into $L_p[0,1]$ (and in particular if it is an isomorphism), then there is a $K_p$ complemented subspace $X$ of $L_p[0,1]$ which is $K_p$-isomorphic to $L_p[0,1]$ and such that some multiple of $T_{|X}$ is a $K_p$-isomorphism and $T(X)$ is $K_p$ complemented in $L_p$. \hfill\break
Moreover, if we consider $L_p[0,1]$ with the norm $|\|x\||_p=\|S(x)\|_p$ (with $S$ being the square function with respect to the Haar system) then, for each $\varepsilon>0$, there is a subspace $X$ of $L_p[0,1]$ which is $(1+\varepsilon)$-isomorphic to $L_p[0,1]$ and such that some multiple of $T_{|X}$ is a $(1+\varepsilon)$-isomorphism (and $X$ and $T(X)$ are $K_p$ complemented in $L_p$).
\end{theorem}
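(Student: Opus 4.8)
The plan is to reduce everything to the "moreover" clause: if we can show that under the square-function norm $|\|\cdot\||_p$ some multiple of $T$ is a $(1+\e)$-isometry on a $(1+\e)$-isometric copy $X$ of $L_p$, then passing back to the original norm only distorts constants by a factor depending on $p$ (since $\|\cdot\|_p$ and $|\|\cdot\||_p$ are equivalent with constants depending on $p$ by Khintchine's inequality applied to the Haar square function), and the complementation statements follow from Proposition \ref{respsuppproj} together with the fact that a good isometric copy of $L_p$ spanned by a block basis of the Haar system is automatically well complemented. So the heart of the matter is the stabilization statement in the square-function norm. The rough strategy there is: (1) start from the hypothesis that $T$ is a sign embedding, so there is a set $S$ and $\delta>0$ with $\|Tf\|\ge\delta$ for mean-zero $f$ with $|f|=\mathbf 1_S$; (2) use a subsequence-splitting / Gaussian-random-measure argument to locate, inside $S$, a sequence of independent mean-zero "Haar-like" functions on which $T$ behaves like a fixed multiple of an isometry up to a vanishing error; (3) build a nested sequence of such systems and pass to a limit to get an honest isometric copy of $L_p$ (using that $L_p$ is generated by an infinite independent dyadic tree of such functions).

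First I would set up the machinery of random embeddings of $L_p$ into $L_p$: realize a copy of $L_p$ as the closed span of a sequence of independent symmetric random variables, or more precisely as a random "Haar-like" tree where at each node one passes to a sub-$\sigma$-algebra and normalizes in the square-function norm. The key probabilistic input is that on such a tree of mean-zero conditionally symmetric functions, the square-function norm is exactly (not just up to constants) the $L_p$ norm of the square function, so an isometry for $|\|\cdot\||_p$ can be built node by node. Then, given $T$, I would argue by a compactness/ultrafilter argument on the scalars: for each finite piece of the tree, $T$ restricted to that finite-dimensional span, after rescaling by the right constant, is close to an isometry on a slightly perturbed copy; the rescaling constant stabilizes as the tree grows (this is the "stabilization" — the multiple of $T$ one must take does not keep drifting). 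Extracting the limiting constant and a diagonal subsequence yields the desired $X$.

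The main obstacle — and, as the authors flag, the most difficult argument in the paper — is Step (2)/(the stabilization): controlling the constant uniformly as one enlarges the independent tree, i.e. showing that the "right multiple" of $T$ does not depend on how deep one goes. I expect this to require a delicate iteration in which, at each stage, one must find the new independent system inside the previous one's atoms while simultaneously (a) keeping $T$ close to a \emph{fixed} scalar multiple of an isometry in the square-function norm, and (b) ensuring the errors are summable so the infinite product of perturbations still yields a genuine $(1+\e)$-isomorphism. The sign-embedding hypothesis (rather than full isomorphism) is exactly what is needed to start the induction — it gives a uniform lower bound $\delta$ on $\|Tf\|$ for the relevant mean-zero indicator-type functions — but turning that one-sided, single-scale estimate into a two-sided, all-scales, stabilized estimate is where the real work lies; I would expect to lean on the structure theory of subspaces of $L_p$ spanned by Haar block bases (Proposition \ref{respsuppproj}) and on moment/Khintchine estimates to push the perturbation errors down at each stage. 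Once the square-function version is in hand, converting back to the ordinary norm and reading off the complementation constants $K_p$ via Proposition \ref{respsuppproj} is routine.
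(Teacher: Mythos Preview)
Your reduction to the ``moreover'' clause is correct, and so is the observation that the complementation of $X$ and $T(X)$ then follows from existing machinery (the paper cites \cite[Lemma 9.6 and Theorem 9.1]{JMST} rather than Proposition \ref{respsuppproj}, but the spirit is the same). However, the core of your proposal --- the ``compactness/ultrafilter argument on the scalars'' and the inductive tree construction with a stabilizing sequence of rescaling constants --- is not a workable mechanism, and it is not what the paper does.

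The paper does not try to pin down the right scalar multiple by watching a sequence of constants converge along a growing tree. Instead, it proceeds as follows. After arranging that $\{Th_{n,i}\}$ is a Haar block basis, one forms the square functions $v_n(E)=S\big(\sum_{h_{n,i}\subseteq E}Th_{n,i}\big)$ and shows (Claim \ref{cl:equi-integrability}) that the convex hull of $\{v_n^2\}$ is $p/2$-equi-integrable --- this is where $p<2$ enters essentially, via an $\ell_2$ vs.\ $\ell_p$ comparison that would fail for $p>2$. A Nikishin-plus-diagonalization argument (Proposition \ref{pr:Lambda}) then produces an $L_{p/2}^+$-valued \emph{measure} $\Lambda$ on the dyadic algebra, obtained as a limit of successive convex combinations of the $v_n^2$, satisfying $\int\Lambda(E)^{p/2}\le C\mu(E)$. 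The ``stabilization'' is then a purely measure-theoretic step (Lemma \ref{lm:stabilization}): one finds a set $A_0$ and a constant $c$ with $c\mu(A)\le\int\Lambda(A)^{p/2}\le c(1+\e)\mu(A)$ for all $A\subseteq A_0$, by a maximality/exhaustion argument. A max-function inequality (Lemma \ref{lm:eq_with_max_function}) and Lyapunov's theorem are then used to build a dyadic tree $\{F_{n,i}\}$ inside $A_0$ on which the square-function estimate (Claim \ref{claim:main}) holds with constants $a$ and $b=(1-\delta)^{-1}a$; the desired subspace $X$ is then spanned by an explicit Gaussian Haar system built from convex combinations indexed by this tree.

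Your outline lacks all of these ingredients: the passage to an $L_{p/2}^+$-valued limit measure, the equi-integrability that makes that passage possible, the density-stabilization Lemma \ref{lm:stabilization}, the max-function estimate, and the use of Lyapunov's theorem to build the tree. Without the limit measure $\Lambda$, there is no object whose ``density'' can be stabilized, and without Lemma \ref{lm:stabilization} there is no mechanism forcing a single scalar to work uniformly across scales; a bare compactness argument on constants extracted at successive finite stages does not supply this, because nothing prevents $T$ from behaving with different scalings on different parts of the tree. The sign-embedding hypothesis is used only to ensure $\Lambda\not\equiv 0$, not to seed an induction in the way you describe.
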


\begin{remark}\label{rem:stabilization}
Note that Theorem \ref{thm:stabilization} is also true for $p=1$. This result follows from \cite[Theorem 1.2]{Rosenthal_L1}, where it is shown that
if $T\in\opLOne$ preserves a copy of $L_1$ then given $\varepsilon>0$, $X$ can be chosen isometric to $L_1$ so that some multiple of $T_{|X}$ is $1+\varepsilon$ isomorphism. Having that remark in mind, sometimes we may use Theorem \ref{thm:stabilization} for the case $p=1$ as well.
\end{remark}

Before we continue our study of the operators on $\LP$ that preserve a copy of $\LP$ we prove Theorem \ref{compactrestrictionthm} in the case of $\LP$, $1<p<2$. For this we need two lemmas for non sign embeddings and $\LP$-strictly singular  operators on $\LP$ that we use both in the next section and later on.


\subsection{$\LP$ - strictly singular operators}

Lemma \ref{compactlemma} was proved in \cite{Rosenthal_L1} for the case $p=1$, and basically the same proof works for general $p$, $1\leq p<\infty$.
\begin{lemma}\label{compactlemma}
Let $T\colon L_p\to L_p$, $\pbounds$ be a non sign embedding operator. Then for all subsets $S\subset\mathbb{R}$ with positive measure there exists a subspace $X\subset \LP(S)$ of $\LP$, $X\equiv \LP$,  such that $T_{|X}$ is compact.
\end{lemma}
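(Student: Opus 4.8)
The plan is to produce inside $\LP(S)$ an isometric copy of $\LP$, spanned by a generalized Haar system built along a dyadic tree of subsets of $S$, chosen so that $T$ is as small as we like on each branching function; the scaling of the Haar coefficients will then force $T$ to be compact on the span. The one input from the hypothesis is the following reformulation of ``$T$ is not a sign embedding'': \emph{for every measurable $A\subseteq[0,1]$ with $\mu(A)>0$ and every $\delta>0$ there is a partition $A=A_1\cup A_2$ with $\mu(A_1)=\mu(A_2)=\tfrac12\mu(A)$ and $\|T(\mathbf{1}_{A_1}-\mathbf{1}_{A_2})\|<\delta$.} Indeed, any $f$ with $\int f\,d\mu=0$ and $|f|=\mathbf{1}_A$ a.e.\ is precisely of this form $\mathbf{1}_{A_1}-\mathbf{1}_{A_2}$ with $\mu(A_1)=\mu(A_2)$, so if the displayed reformulation failed for some $(A,\delta)$ then $A,\delta$ would witness that $T$ is a sign embedding.

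Fix $S_0\subseteq S$ with $\mu(S_0)>0$ and a sequence $\varepsilon_m>0$ to be specified. Applying the reformulation repeatedly, build a tree of measurable sets $\{F_w\}$ indexed by the finite $\{0,1\}$-strings $w$, with $F_\emptyset=S_0$, and for each $w$ a partition $F_w=F_{w0}\cup F_{w1}$ into pieces of equal measure such that, writing $g_w:=\mathbf{1}_{F_{w0}}-\mathbf{1}_{F_{w1}}$, one has $\|Tg_w\|<\varepsilon_{|w|}$. Since $\mu(F_w)=2^{-|w|}\mu(S_0)>0$, the construction proceeds level by level with no obstruction. Put $X:=\overline{\vspan}\bigl(\{\mathbf{1}_{S_0}\}\cup\{g_w\}\bigr)$. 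For each $n$, $\{\mathbf{1}_{S_0}\}\cup\{g_w:|w|<n\}$ spans the finite dimensional $\LP$-space of the algebra $\mathcal G_n$ generated by the level-$n$ cells $\{F_w:|w|=n\}$; the $\sigma$-algebra $\mathcal G$ generated by all the $F_w$ has no atom of positive measure (any atom is a decreasing intersection of cells, hence of measure $\lim_m2^{-m}\mu(S_0)=0$), so $X=\LP(S_0,\mathcal G,\mu)$ is the $\LP$-space of a separable atomless finite measure algebra, i.e.\ $X\equiv\LP$; and $X\subseteq\LP(S_0)\subseteq\LP(S)$. Ordered by $|w|$, the system $\{\mathbf{1}_{S_0}\}\cup\{g_w\}$ is a monotone (Haar-type) basis of $X$; its basis projection $P_n$ onto $\vspan(\{\mathbf{1}_{S_0}\}\cup\{g_w:|w|<n\})$ is the conditional expectation $E[\,\cdot\,|\mathcal G_n]$, hence a contraction, with $(I-P_n)X=\overline{\vspan}\{g_w:|w|\ge n\}=:X_n$; and the biorthogonal functional of $g_w$ is $g_w^\ast=\mu(F_w)^{-1}g_w$, of norm $\|g_w^\ast\|_{p'}=\mu(F_w)^{-1/p}$ in $L_{p'}$ (with $p'$ the conjugate exponent, $L_{p'}=L_\infty$ when $p=1$).

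Now $TP_n$ has finite rank and $\|T|_X-TP_n\|=\|T(I-P_n)|_X\|\le\|(I-P_n)|_X\|\,\|T|_{X_n}\|\le2\|T|_{X_n}\|$, so it is enough to show $\|T|_{X_n}\|\to0$. For $x=\sum_{|w|\ge n}a_wg_w\in X_n$ we have $|a_w|=|\langle x,g_w^\ast\rangle|\le\|x\|\,\mu(F_w)^{-1/p}=\|x\|\,\mu(S_0)^{-1/p}2^{|w|/p}$, and since there are $2^m$ strings of length $m$,
\[
\|Tx\|\le\sum_{|w|\ge n}|a_w|\,\|Tg_w\|\le\|x\|\,\mu(S_0)^{-1/p}\sum_{m\ge n}2^m\,2^{m/p}\varepsilon_m=\|x\|\,\mu(S_0)^{-1/p}\sum_{m\ge n}2^{m(1+1/p)}\varepsilon_m.
\]
Choosing, once and for all, $\varepsilon_m:=2^{-m(2+1/p)}$ turns the last sum into $\sum_{m\ge n}2^{-m}=2^{1-n}$, so $\|T|_{X_n}\|\le2^{1-n}\mu(S_0)^{-1/p}\to0$ and $T|_X=\lim_nTP_n$ is compact. (The computation is uniform in $1\le p<\infty$; for $p=1$ this is essentially Rosenthal's argument from \cite{Rosenthal_L1}.)

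The argument is essentially bookkeeping once the reformulation of ``not a sign embedding'' is on the table. The points to be careful about are: (i) that a mean-zero function of modulus $\mathbf{1}_A$ is exactly an equal-measure bisection of $A$, so that the reformulation applies verbatim at each node; (ii) that $\mathcal G$ is genuinely atomless, so that $X$ is isometric (not merely isomorphic) to $\LP$; and (iii) the coefficient bound $|a_w|\le\|x\|\,\mu(F_w)^{-1/p}$, which is just $\|g_w^\ast\|_{p'}=\mu(F_w)^{-1/p}$. The one quantitative point — and the only place anything is really used — is that the geometric budget $\varepsilon_m=2^{-m(2+1/p)}$ must, and does, beat the factor $2^{m(1+1/p)}$ coming from the $2^m$ cells at level $m$ times the coefficient blow-up $2^{m/p}$.
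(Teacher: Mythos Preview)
Your proof is correct and follows essentially the same approach as the paper: build a generalized Haar system on a dyadic tree of subsets of $S$, using the failure of sign-embedding at each node to make $T$ small on the branching functions, and then read off compactness from the monotone basis structure. The only differences are cosmetic --- you use tree indexing, include $\mathbf{1}_{S_0}$ in the basis (which makes the isometry $X\equiv L_p$ completely transparent), and spell out the tail-norm estimate $\|T|_{X_n}\|\to 0$ explicitly, whereas the paper leaves that last step as ``easy to deduce''.
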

\begin{proof}
We can choose by induction sets $A_i$ in $S$ such that $A_1 = S$, $A_{n} = A_{2n}\cup A_{2n+1}$, $A_{2n}\cap A_{2n+1}=\emptyset$, $\displaystyle \mu(A_{2n}) = \frac{1}{2}\mu(A_n)$, and
$\displaystyle \|Tx_n\|<\frac{\varepsilon}{2^{n+1}}$ where
$\displaystyle x_n = \frac{\mathbf{1}_{A_{2n}}  - \mathbf{1}_{A_{2n+1}}}{\mu(A_n)^{\frac{1}{p}}}$.
In order to do that assume that we have $A_1, A_2, \ldots , A_k$ where $k$ is an odd number and let $n = \frac{k+1}{2}$. Since $T$ is not a sign embedding, there exists $y_n$ such that $\int y_n = 0$,
$|y_n| = \mathbf{1}_{A_n}$, and $\|Ty_n\|\leq \frac{\varepsilon}{2^{n+1}}\mu(A_n)^{\frac{1}{p}}$. Then set $x_n = \frac{y_n}{\mu(A_n)^{\frac{1}{p}}}$,
$A_{2n} = \{x : x_n(x) = 1\}$, and $A_{2n+1} = \{x : x_n(x) = -1\}$. It is not hard to see that $(x_i)_{i=1}^{\infty}$ is isometrically equivalent to the usual sequence of  Haar functions and hence
$X =\mathrm{span} \{x_i\}$ is isometric to $L_p$. From the fact that $(x_i)$ is a monotone basis for $X$ it is easy to deduce that $T_{|X}$ is a compact operator.
\end{proof}
Note that Lemma \ref{compactlemma} immediately implies that for every complemented subspace $Y$ of $\LP$, $Y\simeq\LP$, there exists a complemented subspace $X\subseteq Y$,  $X\simeq \LP$,  such that $T_{|X}$ is compact.
The following lemma, which is also an immediate consequence of Lemma \ref{compactlemma} and Theorem \ref{thm:stabilization}, will be used in the sequel.
\begin{lemma}\label{smallnormLemma}
Let $T\in\opLP$, $1\leq p<2$,  be an $\LP$-strictly singular operator. Then for any $X\subseteq\LP$, $X\simeq\LP$  and  $\varepsilon >0$ there exists  $Y\subseteq X$, $Y\simeq\LP$ such that $\|T_{|Y}\|<\varepsilon$.
\end{lemma}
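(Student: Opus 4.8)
Lemma \ref{smallnormLemma} asserts that an $\LP$-strictly singular operator $T$ can be made arbitrarily small in norm after restricting to a further copy of $\LP$ inside any given copy. The plan is to combine the two ingredients the statement explicitly advertises: Theorem \ref{thm:stabilization} (together with Remark \ref{rem:stabilization} for $p=1$), which lets us pass to a \emph{nice} isometric copy of $\LP$ on which multiples of operators behave well, and Lemma \ref{compactlemma}, which produces, for a \emph{non sign embedding}, an isometric copy of $\LP$ on which the operator is compact. The bridge between ``$\LP$-strictly singular'' and ``non sign embedding'' is immediate: a sign embedding is in particular an isomorphism on a copy of $\LP$ (the copy spanned by the relevant mean-zero $\pm 1_S$-type Haar functions), so an $\LP$-strictly singular operator cannot be a sign embedding; hence $T$ is a non sign embedding and Lemma \ref{compactlemma} applies.

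First I would fix $X\subseteq\LP$ with $X\simeq\LP$ and $\varepsilon>0$. Since $T$ is $\LP$-strictly singular, $T$ is not a sign embedding, so by (the remark following) Lemma \ref{compactlemma} there is a complemented subspace $Z\subseteq X$ with $Z\simeq\LP$ on which $T$ is compact. Actually, to get $Z$ \emph{inside the given} $X$ rather than inside an arbitrary $\LP(S)$, I would first use the fact (recorded just before Definition \ref{HaarBasis}) that a copy of $\LP$ that is not complemented still contains a complemented copy, so I may assume $X$ is complemented, and then invoke the ``Note'' after Lemma \ref{compactlemma} verbatim: for every complemented $Y\simeq\LP$ there is a complemented $X'\subseteq Y$ with $X'\simeq\LP$ on which $T$ is compact. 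Rename $X'$ as $Z$.

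Now $T_{|Z}$ is a compact operator from $Z\simeq\LP$ into $\LP$. On a space isomorphic to $\LP$, a compact operator has the property that it is small on some further copy of $\LP$: indeed $\LP\simeq(\sum\LP)_p$, so $Z$ contains an infinite ``independent'' sequence of subcopies $Z_1,Z_2,\dots$ each isomorphic to $\LP$ and spanning (isomorphically) their $\ell_p$-sum, and $\|T_{|Z_n}\|\to 0$ because otherwise one extracts from the $Z_n$'s, using compactness of $T_{|Z}$, a normalized block sequence on which $T$ is bounded below — but that block sequence spans a copy of $\LP$ inside $Z$ (a standard consequence of the Pe\l czy\'nski decomposition / the structure of $\LP$), contradicting compactness of $T_{|Z}$. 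Pick $n$ with $\|T_{|Z_n}\|<\varepsilon$ and set $Y=Z_n$.

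The main obstacle is the last paragraph: making precise the passage ``compact on a copy of $\LP$ $\Rightarrow$ norm-small on a sub-copy of $\LP$.'' The cleanest route is to note that this is exactly where Theorem \ref{thm:stabilization} is designed to be used: since $T_{|Z}$ is not a sign embedding either (it is compact, and an infinite-dimensional domain forbids compact sign embeddings), one cannot invoke Theorem \ref{thm:stabilization} for $T_{|Z}$ directly, so instead the honest argument is a compactness/gliding-hump extraction: fix a normalized sequence $(f_k)$ in $Z$ equivalent to the Haar basis of $\LP$ and with $(f_k)$ ``spread out'' (a block basis of a copy of the Haar system), pass to a subsequence along which $Tf_k\to g$ in norm by compactness, then a further gliding hump so that $\|Tf_{k}-g\|$ is summable and $g$ is ``absorbed,'' giving a block sequence $(f_k')$ of the $f_k$'s, equivalent to the Haar system hence spanning $Y\simeq\LP$, with $\|T_{|Y}\|$ as small as desired; complementation of $Y$ is arranged by taking $(f_k')$ to be (a small perturbation of) a block basis of the Haar system, which spans a complemented subspace isomorphic to $\LP$. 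I would present this extraction as the core of the proof and cite Lemma \ref{compactlemma} and the structure theory of $\LP$ for the remaining routine points.
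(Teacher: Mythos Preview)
Your high-level plan --- rule out sign embedding via Theorem \ref{thm:stabilization}, then invoke Lemma \ref{compactlemma} inside $X$ --- matches the paper. But you overlook the one-line finish and replace it with a detour that, as written, does not close. The \emph{construction} in Lemma \ref{compactlemma} does more than yield compactness of $T_{|Z}$: it produces a sequence $(x_i)$ isometrically equivalent to the Haar system with $\|Tx_i\|<\varepsilon/2^{i+1}$. Since $(x_i)$ is a normalized monotone basis, each coefficient functional has norm at most $2$, whence $\|T_{|\overline{\vspan}(x_i)}\|\le 2\sum_i \varepsilon/2^{i+1}<\varepsilon$. That is the entire argument; no second extraction is needed. (To land inside the given $X$, apply this to $TU$ for an isomorphism $U:L_p\to X$ and absorb $\|U^{-1}\|$ into $\varepsilon$.)

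Your detour ``compact $\Rightarrow$ small on a subcopy of $\LP$'' has genuine gaps. In the gliding-hump version you pass to a subsequence with $Tf_k\to g$, but if $g\neq 0$ there is nothing to ``absorb'': on $\overline{\vspan}(f_k)$ the operator $T$ is approximately $f_k\mapsto g$, of norm $\approx\|g\|$. Forcing $g=0$ requires $(f_k)$ weakly null, which you do not arrange. The $(\sum\LP)_p$ version fares no better: your stated contradiction (``the block sequence spans a copy of $\LP$'') is false --- a block sequence across the summands spans $\ell_p$, not $\LP$ --- and for $p=1$ the natural repair (weak nullity plus compactness) is unavailable, since the $\ell_1$ unit vectors are not weakly null; indeed a rank-one operator on $(\sum\LP)_1$ already shows that $\|T_{|Z_n}\|\to 0$ can fail for compact $T$. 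Finally, the parenthetical claim that a sign embedding is ``in particular an isomorphism on the copy spanned by the mean-zero $\pm 1_S$ functions'' is not immediate: that a sign embedding preserves a copy of $\LP$ is precisely the content of Theorem \ref{thm:stabilization} (and of \cite{Rosenthal_L1} for $p=1$), and should be cited as such rather than asserted as obvious.
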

\begin{proof}
Again, this result immediately follows from Lemma \ref{compactlemma} and Theorem \ref{thm:stabilization}.
A $\LP$-strictly singular operator cannot be a sign embedding (Theorem \ref{thm:stabilization}) and then we use the construction in Lemma \ref{compactlemma}. From the fact that $(x_i)$ is a monotone basis for $X$ it follows that $\|T_{|X}\|<\varepsilon$.
\end{proof}
\begin{remark}
Clearly the proof we have for Lemma \ref{smallnormLemma} depends heavily on Theorem \ref{thm:stabilization} which is the deepest result of this paper. We do not know if an analogue of Lemma \ref{smallnormLemma} holds for $2<p<\infty$.
\end{remark}

\begin{proof}[Proof of Theorem \ref{compactrestrictionthm} in the case of $\LP$, $1\leq p<2$.]
This is a direct consequence of Theorem \ref{thm:stabilization} and Lemma \ref{compactlemma}.
First we find a complemented subspace $X'\subset Y$, $X'\simeq \LP$.
Now we observe that an $\LP$-strictly singular operator cannot be a sign embedding operator  (Theorem \ref{thm:stabilization}) and then we use Lemma \ref{compactlemma} for $X'$ to find a complemented subspace $X\subset X'$, $X\simeq\LP$, such that $T_{|X}$ is a compact operator.
\end{proof}


\subsection{Operators that preserve a copy of $\LP$}
\begin{definition}
For $T:\LP\to\LP$ and $X\subseteq\LP$ define the following two quantities:
\begin{align}
f(T,X) &= \inf_{x\in S_X} \|Tx\|\\
g(T,X) &= \sup_{\begin{array}{c} Y\subseteq X \\ Y\simeq\LP \end{array}}  f(T,Y).
\end{align}
\end{definition}
Clearly $f(T,X)$ does not decrease and $g(T,X)$ does not increase if we pass to subspaces. For an arbitrary subspace $Z\subset \LP$, $Z\simeq\LP$ note the following two (equivalent) basic facts:
\begin{itemize}
\item $T_{|Z'}$ is an isomorphism for some $Z'\subset Z$, $Z'\simeq\LP$ if and only if $g(T,Z)>0$
\item $T_{|Z}$ is $\LP$-strictly singular if and only if $g(T,Z)=0$
\end{itemize}
\begin{proposition}\label{PerturbationProp}
Let $S\colon\LP\to\LP$, $1\leq p<2$, be an $\LP$-strictly singular operator and let $Z$ be a subspace of $\LP$ which is also isomorphic to $\LP$. Then for every operator $T\in\opLP$ we have $g(T+S,Z) = g(T,Z)$.
\end{proposition}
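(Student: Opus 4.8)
The plan is to prove the two inequalities $g(T+S,Z)\le g(T,Z)$ and $g(T,Z)\le g(T+S,Z)$ separately; by the symmetry $T\leftrightarrow T+S$ (note $S$ strictly singular implies $-S$ strictly singular, and $T=(T+S)+(-S)$), it suffices to establish one of them, say $g(T+S,Z)\ge g(T,Z)$. So fix $\e>0$ and choose, by definition of $g(T,Z)$, a subspace $Z_0\subseteq Z$ with $Z_0\simeq\LP$ and $f(T,Z_0)>g(T,Z)-\e$; I must produce a further subspace of $Z_0$, still isomorphic to $\LP$, on which $T+S$ is bounded below by essentially the same constant.

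First I would pass to a complemented copy: by \cite[Theorem 9.1]{JMST} (for $p>1$; \cite[Theorem 1.1]{Rosenthal_L1} for $p=1$) there is $Z_1\subseteq Z_0$ with $Z_1\simeq\LP$ and $Z_1$ complemented in $\LP$. Since $f$ does not decrease under passing to subspaces, $f(T,Z_1)>g(T,Z)-\e$. Now $S$ is $\LP$-strictly singular, so by Lemma \ref{smallnormLemma} (this is exactly where $1\le p<2$ and hence Theorem \ref{thm:stabilization} enters) there is a subspace $Y\subseteq Z_1$ with $Y\simeq\LP$ and $\|S_{|Y}\|<\e$. Then for every $y\in S_Y$,
$$
\|(T+S)y\|\ \ge\ \|Ty\|-\|Sy\|\ \ge\ f(T,Z_1)-\e\ >\ g(T,Z)-2\e,
$$
so $f(T+S,Y)\ge g(T,Z)-2\e$, and since $Y\subseteq Z$ is a copy of $\LP$ this gives $g(T+S,Z)\ge g(T,Z)-2\e$. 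Letting $\e\to0$ yields $g(T+S,Z)\ge g(T,Z)$, and the reverse inequality follows by applying this to the operator $T+S$ and the strictly singular perturbation $-S$.

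The only real content is the use of Lemma \ref{smallnormLemma} to make the perturbation uniformly small on a full copy of $\LP$ inside $Z_1$; everything else is bookkeeping with the monotonicity of $f$ under subspaces and the triangle inequality. The main obstacle, therefore, is entirely hidden inside Lemma \ref{smallnormLemma}, i.e.\ ultimately inside Theorem \ref{thm:stabilization}; there is no additional difficulty at the level of this proposition, and in particular the argument as written does not extend to $2<p<\infty$ precisely because Lemma \ref{smallnormLemma} is not available there.
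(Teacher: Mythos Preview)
Your argument is correct and essentially identical to the paper's: choose $Z_0\subseteq Z$ with $f(T,Z_0)>g(T,Z)-\e$, invoke Lemma \ref{smallnormLemma} to find $Y\subseteq Z_0$ with $\|S_{|Y}\|<\e$, apply the triangle inequality, and then use the symmetry $T=(T+S)+(-S)$ for the reverse inequality. The only cosmetic difference is that you insert the intermediate step of passing to a complemented $Z_1\subseteq Z_0$, which is unnecessary since Lemma \ref{smallnormLemma} applies to any copy of $\LP$; the paper also treats the case $g(T,Z)=0$ separately, whereas your formulation absorbs it automatically (the inequality $g(T+S,Z)\ge g(T,Z)-2\e$ is vacuous there, and the symmetry step still yields equality).
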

\begin{proof}
If $T_{|Z}$ is $\LP$-strictly singular then $(T+S)_{|Z}$ is also $\LP$-strictly
 singular  hence
$g(T,Z)= g(T+S,Z)=0$. For the rest of the proof we consider the case where there exists $Z'\subset Z$, $Z'\simeq\LP$ such that $T_{|Z'}$ is an isomorphism hence $g(T,Z)>0$. \\
Let $0<\varepsilon < g(T,Z)/4$  and let $Y\subseteq Z$, $Y\simeq\LP$ be such that  $g(T,Z)-\varepsilon < f(T,Y)$. Using Lemma \ref{smallnormLemma} we find $Y_1\subseteq Y$, $Y_1\simeq\LP$ such that $\|S_{|Y_1}\|<\varepsilon$. Now
$$
g(T+S,Z)\geq f(T+S,Y_1)>f(T,Y_1)-\varepsilon \geq f(T,Y)-\varepsilon >g(T,Z)-2\varepsilon
$$
hence  $g(T+S,Z)\geq g(T,Z)-2\varepsilon$. Switching the roles of $T$ and $T+S$ (apply the previous argument for
$T+S$ and $-S$) gives us $g(T,Z)\geq g(T+S,Z)-2\varepsilon$ and since $\varepsilon$ was arbitrary small we conclude that $g(T+S,Z) = g(T,Z)$.
\end{proof}

\begin{lemma} \label{disjointifyLemma}
Let $X$ and $Y$ be two subspaces of $\LP$, $1\leq p < 2$, such that $X\simeq Y\simeq \LP$. Then there exist subspaces $X_1\subseteq X$, $Y_1\subseteq Y$ such that $X_1\simeq Y_1\simeq \LP$, $d(X_1, Y_1)>0$, and $X_1+Y_1$ is complemented subspace of $\LP$. Moreover, $X_1$ and $Y_1$ can be chosen in such a way that there exists projection onto $X_1+Y_1$ with norm depending only on $p$.
\end{lemma}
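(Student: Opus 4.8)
The plan is, by passing to subspaces twice, to reduce to the case in which $X$ and $Y$ are spanned by \emph{generalized Haar systems}, and then to disjointify the two underlying trees of partitions.

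First I would reduce as follows. Since $X\simeq\LP$, by \cite[Theorem 9.1]{JMST} (for $1<p<2$) and \cite[Theorem 1.1]{Rosenthal_L1} (for $p=1$) we may assume $X$ is complemented in $\LP$; then, applying Theorem \ref{thm:stabilization} and the description of the almost--isometric copies of $\LP$ it produces in the square--function norm, we may pass once more to a subspace and assume $X=\overline{\vspan}\{x_{n,i}\}$, where $\{x_{n,i}\}$ is a normalized block basis of the Haar system which is $C(p)$--equivalent to the Haar basis and which carries a tree structure: each $x_{n,i}$ is mean zero, $\supp x_{n+1,2i-1}\sqcup\supp x_{n+1,2i}\subseteq\supp x_{n,i}=:F_{n,i}$, and $\supp x_{0,0}=[0,1]$. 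Do the same for $Y$, with system $\{y_{n,i}\}$ and atoms $G_{n,i}$. The structural facts I would then use are: (a) the closed span of the subtree $\{x_{n,i}:(n,i)\succeq(n_0,i_0)\}$ is $\{f\in\LP(F_{n_0,i_0}):\int f=0\}$, a hyperplane of $\LP(F_{n_0,i_0})\equiv\LP$ and hence a copy of $\LP$; (b) consequently the $\sigma$--algebras generated by the full trees are the whole Borel $\sigma$--algebra, so along every branch $\mu(F_{n,i})\to 0$ and $\mu(G_{n,i})\to 0$; (c) every such rooted--subtree span is $2$--complemented in $\LP$, being codimension one in $\LP(F_{n_0,i_0})$, which is $1$--complemented in $\LP$ via multiplication by $\mathbf 1_{F_{n_0,i_0}}$ (for $1<p<2$ one may alternatively invoke Proposition \ref{respsuppproj}).

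The heart of the argument, and the step I expect to be the main obstacle, is to locate a node $(n_0,i_0)$ of the first tree and $(m_0,j_0)$ of the second with $\mu(F_{n_0,i_0}\cap G_{m_0,j_0})=0$. Using (b) one first drives the atoms of both trees to small measure, and then an exhaustion/counting argument over the levels of the two trees of partitions should produce such a disjoint pair. The delicate point is the possibility that the two trees of partitions, though each generating the Borel field, have every atom of the first meeting every atom of the second in positive measure; this cannot be excluded by measure alone, and in that case one must re--choose one of the generalized Haar systems. Concretely one would produce a node $(n_0,i_0)$ of the first tree and a set $G$ with $\mu(G\cap F_{n_0,i_0})=0$ such that multiplication by $\mathbf 1_G$ is $\LP$--strictly singular on the subtree of $Y$ rooted at the relevant node, then apply Lemma \ref{smallnormLemma} to extract a further copy $Y_1\simeq\LP$ inside $Y$ which is $\varepsilon$--concentrated off $F_{n_0,i_0}$, keeping complementation via Proposition \ref{smallperturbationProp}; the final estimate below then only degrades by $O(\varepsilon)$, which is harmless.

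To conclude, set $X_1$ to be the span of the subtree of $\{x_{n,i}\}$ rooted at $(n_0,i_0)$, put $F:=F_{n_0,i_0}$, let $Y_1$ be the span of the subtree of $\{y_{n,i}\}$ rooted at $(m_0,j_0)$, put $G:=G_{m_0,j_0}$, with $\mu(F\cap G)=0$. By (a), $X_1\simeq Y_1\simeq\LP$. Since $X_1\subseteq\LP(F)$, $Y_1\subseteq\LP(G)$ and $F\cap G$ is null, for $x\in S_{X_1}$ and $y\in Y_1$ we get $\|x-y\|_p^p=\int_F|x|^p\,d\mu+\int_G|y|^p\,d\mu\ge 1$, so $d(X_1,Y_1)\ge 1>0$. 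Finally, if $Q_X$ and $Q_Y$ are projections of norm at most $2$ onto $X_1$ and $Y_1$ as in (c), and $M_F$, $M_G$ denote multiplication by $\mathbf 1_F$, $\mathbf 1_G$, then $Q:=Q_XM_F+Q_YM_G$ is a projection of $\LP$ onto $X_1+Y_1$: its range lies in $X_1+Y_1$, it fixes $X_1$ and $Y_1$ since $M_Gx=0$ for $x\in X_1$ and $M_Fy=0$ for $y\in Y_1$, and $Q^2=Q$ for the same reason. Hence $\|Q\|\le\|Q_X\|+\|Q_Y\|\le 4$, which depends only on $p$, completing the proof.
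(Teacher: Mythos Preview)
Your strategy---reduce both $X$ and $Y$ to spans of generalized Haar systems with tree-structured supports $\{F_{n,i}\}$, $\{G_{m,j}\}$, then find nodes with $\mu(F_{n_0,i_0}\cap G_{m_0,j_0})=0$---is different from the paper's and would be very clean \emph{if} the disjoint-atom step went through. But that step is a genuine gap, and your own proposal does not close it. Two trees of finite unions of dyadic intervals, each with atoms shrinking to measure zero, can perfectly well have every atom of one meet every atom of the other in positive measure (at level $n$ the $2^n$ disjoint $F_{n,i}$ need only meet a fixed $G_{m_0,j_0}$ in measure $\sim 2^{-n}$ each); so the ``exhaustion/counting argument'' you allude to does not exist.

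Your fallback is also not a proof. First, the direction is backwards: if multiplication by $\mathbf 1_G$ (with $G\cap F=\emptyset$) is $L_p$-strictly singular on a subtree of $Y$ and you apply Lemma~\ref{smallnormLemma}, you obtain $Y_1$ with $\|M_G|_{Y_1}\|<\varepsilon$, i.e.\ $Y_1$ concentrated on $G^c\supseteq F$, the opposite of what you need. What you actually want is $M_F|_Y$ $L_p$-strictly singular for some node $F$ of the $X$-tree. Second, and more importantly, you give no argument for why such an $F$ exists, and in general it does not: if $Y=L_p[0,1]$ then $M_F$ is a norm-one projection onto $L_p(F)\simeq L_p$, hence $M_F|_Y$ is never $L_p$-strictly singular, for any $F$ of positive measure. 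So your workaround covers only one branch of an inevitable dichotomy and is silent on the other. This is precisely where the paper's argument diverges from yours: rather than trying to manufacture disjoint supports, it runs the dichotomy on the projection $Q$ onto $Y$ restricted to $X$. If $Q_{|X}$ is $L_p$-strictly singular, Lemma~\ref{smallnormLemma} gives $X_1\subseteq X$ with $\|Q_{|X_1}\|$ small, forcing $d(X_1,Y)>0$. If not, Theorem~\ref{thm:stabilization} makes a multiple of $Q_{|X'}$ a $K_p$-isomorphism on some $X'\simeq L_p$; one then splits $X'$ as $L_p(A)\oplus L_p(B)$ and takes $X_1=L_p(A)$, $Y_1=Q(L_p(B))\subseteq Y$, using the isomorphism to transfer the trivial separation $d(L_p(A),L_p(B))=1$ to $d(X_1,Y_1)>0$. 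A second pass with $(I-Q)$, $(I-P)$ then produces the explicit projection onto $X_1+Y_1$ with norm bounded by a function of $p$ alone. The point is that the non-strictly-singular case is handled not by disjoint supports in the ambient $L_p$ but by pushing the problem through $Q$ to a copy of $L_p$ where disjoint supports are free.

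A smaller issue: your structural claim (a), that the subtree span equals the full mean-zero hyperplane of $L_p(F_{n_0,i_0})$, is not what Theorem~\ref{thm:stabilization} delivers (its basis vectors are convex-square-root combinations of Haar blocks, not $\pm 1$-valued), though the weaker facts you actually need---subtree span $\simeq L_p$, contained in $L_p(F_{n_0,i_0})$, complemented via Proposition~\ref{respsuppproj}---do hold.
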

\begin{proof}
Without loss of generality, by passing to a subspace if necessary, we may assume that $X\simeq Y\simeq L_p$ are two complemented subspaces of $\LP$.
Our first step is to find two subspaces $X_1\subseteq X$ and $Y_1\subseteq Y$ which are isomorphic to $\LP$ and $d(X_1,Y_1)>0$.

Let $P\colon \LP\to X$ and $Q\colon \LP\to Y$ be two onto projections. We consider two cases for the operator $Q_{|X}\colon X\to Y$:

\textbf{Case 1.}  $Q_{|X}$ is $\LP$-strictly singular. Fix $\delta >0$. Using Lemma \ref{smallnormLemma}  we find $X_1\subseteq X$, $X_1\simeq L_1$ such that $\|Q_{|X_1}\|<\delta$. We are going to show that $d(X_1,Y)>0$.\\
Let $x\in S_{X_1}$ and $y\in Y$ be arbitrary. If $\displaystyle \|y\|\notin [\frac{1}{2},2]$ then clearly $\displaystyle \|x-y\|>\frac{1}{2}$. If not, then
$$
\|x-y\|\geq \frac{\|Qx-Qy\|}{\|Q\|} = \frac{\|Qx-y\|}{\|Q\|}\geq \frac{\|y\|-\|Qx\|}{\|Q\|}\geq \frac{1}{\|Q\|}(\frac{1}{2}-\delta).
$$
Since $x$ and $y$ were arbitrary we conclude that $d(X_1, Y)>0$ (here we take $Y_1 = Y$).

\textbf{Case 2.} $Q_{|X}$ is not $\LP$-strictly singular. Fix $\delta >0$. Then using Theorem \ref{thm:stabilization} we find $X'\subseteq X$ such that $X'$ is isomorphic to $\LP$ and $\lambda Q_{|X'}$ is a $K_p$ isomorphism for some $\lambda>0$. Without loss of generality we may assume that $X' = \LP (\nu)$ for some non-atomic measure $\nu$.
Now we find two disjoint, $\nu$-measurable sets $A$ and $B$ with positive measure and denote $X_1 = \LP(A)\subseteq \LP(\nu)$, $X_2 = \LP(B)\subseteq \LP(\nu)$, and  $Y_1 = QX_2$. Clearly $d(X_1, X_2) = 1$ and we are going to show that $d(X_1, QX_2)>0$.

Let $x\in S_{X_1}$ and $y=\lambda Qz\in QX_2$. Then
$$
\|x-y\|\geq \frac{\|Qx-Qz\|}{\|Q\|} =  \frac{\|\lambda Qx-\lambda Qz\|}{\lambda\|Q\|}\geq \frac{\|x-z\|}{K_p\lambda \|Q\|}\geq \frac{1}{K_p\lambda \|Q\|}
$$

Having $X_1$ and $Y_1$ from our first step, without loss of generality (by passing to a subspace if necessary) we may assume that $X_1$ and $Y_1$ are $K_p$ complemented in $\LP$ and $K_p$ isomorphic to $\LP$ and, for simplicity of notation, we will use $X$ and $Y$ instead of $X_1$ and $Y_1$.

Let $P\colon \LP\to X$ and $Q\colon \LP\to Y$ be two onto projections of norm
$K_p$. It is easy to see that $(I-Q)_{|X}$ is not a $\LP$-strictly singular operator. If we assume that this is not the case, fix $\delta >0$ and using Lemma \ref{smallnormLemma} we find $X'\subset X$, $X'\simeq \LP$, such that $\|(I-Q)_{|X'}\|<\delta$. But then for $x'\in S_{X'}$ and $Qx'\in Y$ we have $\|x'-Qx'\|<\delta$ which is a contradiction with $d(X,Y)>0$ since $\delta$ was arbitrary. Similarly, we show that $(I-P)_{|Y}$ is not a $\LP$-strictly singular operator.

Fix $\varepsilon >0$. Let $X_1\subset X$, $X_1\simeq \LP$, be such that $(I-Q)_{|X_1}$ is an isomorphism and some multiple of $(I-Q)_{|X_1}$ is a $K_p$ isomorphism. By Theorem \ref{thm:stabilization} there exists a $K_p$ complemented subspace $X_2\subset (I-Q)(X_1)$ which is also $K_p$ isomorphic to $\LP$.
Denote $X'={((I-Q)_{|X_2})}^{-1}(X_2)$. Now  $X'\simeq \LP$ and it is easy to see that $X'$ is complemented in $\LP$ (via the projection ${((I-Q)_{|X_2})}^{-1}P_{X_2}(I-Q)$ of norm at most $K_p^2$, where $P_{X_2}$ is a projection of norm $K_p$ onto $X_2$).

Similarly, we find $Y'\subseteq Y$ such that $Y'$ is $K_p$ isomorphic to $\LP$, $K_p^2$ complemented in $\LP$, and $(I-P)_{|Y'}$ is an isomorphism.

Let $R_1$ and $R_2$ be projections onto $(I-Q)X'$ and  $(I-P)Y'$, respectively,  of norm at most $K_p^2$. Denote by $V_1: (I-Q)X'\to X'$ the inverse map of $(I-Q)_{|X'}:X'\to (I-Q)X'$ and, similarly, denote by $V_2: (I-P)Y'\to Y'$ the inverse map of $(I-P)_{|Y'}:Y'\to (I-P)Y'$. Then a basic algebraic computation shows that $PV_1R_1(I-Q) + QV_2R_2(I-P)$ is a projection onto $X'+Y'$ of norm at most $2K_p^4$ which finishes the proof.
\end{proof}

\begin{lemma}\label{UNCLemma}
Let $T\in\opLP$, $1\leq p<2$, and assume that for every $X\subseteq\LP$, $X\simeq\LP$, there exists  a subspace  $X_1\subseteq X$, $X_1\simeq\LP$, and a constant $\lambda = \lambda(X_1)$ such that $T_{|X_1} = \lambda I_{X_1} + S$ where $S_{|X_1}$ is an $\LP$-strictly singular operator. Then there exists  a constant $\lambda$ and an $\LP$-strictly singular operator $S$, depending only on $T$, such that $T = \lambda I + S$.
\end{lemma}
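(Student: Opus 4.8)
The plan is to prove the statement in two stages: first show that the scalar $\lambda(X_1)$ occurring in the hypothesis does not depend on the subspace $X_1$, and then show that $T$ minus this common scalar multiple of the identity is $\LP$-strictly singular. For the first stage, suppose toward a contradiction that there are subspaces $X_1,X_2\subseteq\LP$, each isomorphic to $\LP$, with $T_{|X_1}=\lambda_1 I+S_1$ and $T_{|X_2}=\mu I+S_2$, where $S_1,S_2$ are $\LP$-strictly singular and $\lambda_1\neq\mu$. Apply Lemma \ref{disjointifyLemma} to $X_1$ and $X_2$ to obtain subspaces $A\subseteq X_1$ and $B\subseteq X_2$ with $A\simeq B\simeq\LP$ and $d(A,B)>0$. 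Fix an isomorphism $\phi\colon A\to B$ and form the ``diagonal'' subspace $Z_0=\{a+\phi a:a\in A\}$. Since $d(A,B)>0$ gives $\|a+\phi a\|\geq d(A,B)\|a\|$, the map $a\mapsto a+\phi a$ is an isomorphism of $A$ onto $Z_0$, so $Z_0\simeq\LP$. The point of feeding the hypothesis this \emph{mixed} copy $Z_0$ — rather than, say, $A\oplus B$, to which the hypothesis could respond with a subspace lying entirely inside $A$, or entirely inside $B$, yielding no contradiction — is that on $Z_0$ the operator $T$ is forced to behave like $\lambda_1$ on the $A$-coordinate and like $\mu$ on the $B$-coordinate simultaneously, and these are incompatible when $\lambda_1\neq\mu$.

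Now apply the hypothesis to $Z_0$: there is $Z_1\subseteq Z_0$ with $Z_1\simeq\LP$ and a scalar $\nu$ with $T_{|Z_1}=\nu I+S_3$, $S_3$ $\LP$-strictly singular. Writing $Z_1=\{x+\phi x:x\in A_1\}$, where $A_1\subseteq A$ is the preimage of $Z_1$ under $a\mapsto a+\phi a$ (so $A_1\subseteq X_1$, $\phi(A_1)\subseteq X_2$, and $A_1\simeq\LP$), we get
\[
(T-\nu I)(x+\phi x)=\underbrace{(\lambda_1-\nu)\,x+(\mu-\nu)\,\phi x}_{V x}\;+\;\underbrace{S_1 x+S_2\phi x}_{R x}\qquad(x\in A_1).
\]
Here $R\colon A_1\to\LP$ is $\LP$-strictly singular ($S_1$ restricted plus $S_2$ composed with $\phi$), and the operator $x\mapsto(T-\nu I)(x+\phi x)$ is $\LP$-strictly singular as well, being the composition of the isomorphism $x\mapsto x+\phi x$ from $A_1$ onto $Z_1$ with $(T-\nu I)_{|Z_1}=S_3$. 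On the other hand $V$ is bounded below on $A_1$: since $(\lambda_1-\nu)x\in A$, $(\mu-\nu)\phi x\in B$ and $d(A,B)>0$, one has both $\|Vx\|\geq d(A,B)\,|\lambda_1-\nu|\,\|x\|$ and $\|Vx\|\geq d(B,A)\,|\mu-\nu|\,c_\phi\|x\|$, where $c_\phi=\inf\{\|\phi y\|:y\in S_A\}>0$; as $\lambda_1\neq\mu$, at least one of $|\lambda_1-\nu|,|\mu-\nu|$ is positive, so $\|Vx\|\geq c\|x\|$ for some $c>0$. Applying Lemma \ref{smallnormLemma} (to each of the two $\LP$-strictly singular summands of $R$ in turn) we find $A_1'\subseteq A_1$ with $A_1'\simeq\LP$ and $\|R_{|A_1'}\|<c/2$; then $\|(V+R)x\|\geq(c/2)\|x\|$ on $A_1'$, so $x\mapsto(T-\nu I)(x+\phi x)$ is an isomorphism on the copy $A_1'$ of $\LP$ — impossible, since it is $\LP$-strictly singular. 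Hence no such pair $X_1,X_2$ exists, and there is a single scalar $\lambda$ such that every representation $T_{|X}=\mu I+(\LP\text{-strictly singular})$ on a subspace $X\simeq\LP$ forces $\mu=\lambda$.

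For the second stage, put $S=T-\lambda I$; then $\lambda$ and $S$ depend only on $T$, and it remains to check that $S$ is $\LP$-strictly singular. If it were not, there would be $X\simeq\LP$ on which $S_{|X}$ is an isomorphism; applying the hypothesis to $X$ produces $X_1\subseteq X$, $X_1\simeq\LP$, with $T_{|X_1}=\mu I+(\LP\text{-strictly singular})$, and by the first stage $\mu=\lambda$, so $S_{|X_1}=(T-\lambda I)_{|X_1}$ is $\LP$-strictly singular; but $S_{|X_1}$ is a restriction of the isomorphism $S_{|X}$ to the copy $X_1$ of $\LP$, hence is itself an isomorphism — a contradiction. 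Therefore $S$ is $\LP$-strictly singular and $T=\lambda I+S$, as required. The only delicate part of the argument is the first stage, and in it the device of applying the hypothesis to a diagonal copy $Z_0$ in order to pin down the scalar; once Lemma \ref{disjointifyLemma} supplies the positive distance $d(A,B)>0$ and Lemma \ref{smallnormLemma} (and hence Theorem \ref{thm:stabilization}) supplies the norm-smallness of the strictly singular part, everything else is a routine perturbation estimate.
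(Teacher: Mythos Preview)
Your proof is correct and follows essentially the same route as the paper: use Lemma~\ref{disjointifyLemma} to separate two copies $A,B$, form the diagonal subspace $Z_0=\{a+\phi a\}$, apply the hypothesis to $Z_0$, and exploit $d(A,B)>0$ to see that the ``linear'' part $V$ is bounded below, forcing the scalars to agree; then finish by the same contradiction argument for $T-\lambda I$. The only minor difference is that you invoke Lemma~\ref{smallnormLemma} to make the strictly singular remainder $R$ small, whereas the paper simply observes that $V$ itself is a difference of $\LP$-strictly singular operators and hence $\LP$-strictly singular, which already contradicts its being bounded below --- your route is a small detour but entirely valid.
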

\begin{proof}
Let $X$ and $Y$ be arbitrary subspaces of $\LP$ which are also isomorphic to $\LP$ and let $X_1\subseteq X$ and $Y_1\subseteq Y$ be the subspaces from the statement of the lemma. We will show that $\lambda(X_1) = \lambda(Y_1)$. Without loss of generality, using Lemma \ref{disjointifyLemma}, we can assume that $X_1\cap Y_1 = \{0\}$ and $X_1+Y_1$ is a closed and complemented subspace of $\LP$. Let
$$
T_{|X_1}  = \lambda_1 I_{X_1} + S_1\qquad , \qquad T_{|Y_1}  = \lambda_2 I_{Y_1} + S_2.
$$
Let $\tau\colon X_1\to Y_1$ be an isomorphism and define $Z = \{x+\tau (x)\,|\, x\in X_1\}$.
\begin{equation}\label{TrestZ}
T_{|Z} = T(x+\tau x) = \lambda_1x+\lambda_2\tau x + S_1x + S_2\tau x.
\end{equation}
The operator $S\colon Z\to \LP$ defined by $S(x+\tau x) = S_1x + S_2\tau x$ is $\LP$-strictly singular as a sum of two such operators. From the assumption of the lemma, there exist $Z_1\subseteq Z$ and $\lambda_3 \in\mathbb{C}$ such that
 $Z_1\simeq \LP$ and
\begin{equation}\label{TrestZ1}
T_{|Z_1} = \lambda_3 I_{Z_1} + S_3
\end{equation}
where $S_3$ is $\LP$-strictly singular. From (\ref{TrestZ}) and (\ref{TrestZ1}) we obtain that
the operator $T_1\colon Z\to \LP$ defined by
$T_1(x+\tau x) = \lambda_1x+\lambda_2\tau x - \lambda_3(x+\tau x)$ is also $\LP$-strictly singular on $Z_1$, i.e $T_1|_{Z_1}$ is $\LP$ strictly singular.
The last conclusion is possible if and only if $\lambda_1 = \lambda_2 = \lambda_3$. In fact, if we assume that $\lambda_1 \neq \lambda_3$, then, the operator $T_1$ will be an isomorphism on $Z$ because for every $x\in S_{X_1}$ we will have
$$
\|T_1(x+\tau x)\| = |\lambda_1-\lambda_3|\left \|x + \frac{\lambda_2-\lambda_3}{\lambda_1-\lambda_3}\tau x \right \| \geq |\lambda_1-\lambda_3|d(X_1, Y_1)\|x\|\geq \frac{|\lambda_1-\lambda_3|}{1+\|\tau\|}d(X_1, Y_1)\|x+\tau x\|.
$$
Let $\lambda = \lambda(X_1)$ for every subspace $X_1$ as in the statement of the lemma.
Now it easily follows that $\lambda I-T$ is $\LP$-strictly singular. Indeed, if we assume otherwise, then there exists a subspace $Z\subseteq\LP$, $Z\simeq\LP$ such that $(\lambda I-T)_{|Z}$ is an isomorphism. But according to the assumptions of the lemma, there exists $Z_1\subset Z$, $Z\simeq\LP$ such that $(\lambda I-T)_{|Z_1}$ is $\LP$-strictly singular which contradicts the fact that $(\lambda I-T)_{|Z}$ is an isomorphism. This finishes the proof.
\end{proof}

An immediate corollary of Lemma \ref{UNCLemma} is that for an operator $T\in\opLP$, $\pbounds$, not of the form $\lambda I + S$, where $S$ is an $\LP$-strictly singular operator, there exists a complemented subspace $X\subset\LP$, $X\simeq\LP$ such that  $(T-\lambda I)_{|X}$ preserves a copy of $\LP$ for every $\LCN$, and this is in fact what we are going to use in the sequel.

\begin{lemma}\label{MainLemma}
Let $T\in\opLP$, $1\leq p<2$, and assume that for every $X\subseteq \LP$, $X\simeq\LP$, and every $\varepsilon >0$ there exist  $X_1\subset X$, $X_1\simeq\LP$, and $\lambda = \lambda(X_1)$ such that $g(\lambda I - T, X_1)<\varepsilon$.
Then for every $\varepsilon>0$ there exists  $\lambda_\varepsilon$  such that $g(\lambda_\varepsilon I - T, \LP)<D_p\varepsilon$ where  $D_p$ is a constant depending only on $p$.
\end{lemma}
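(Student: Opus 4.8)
The plan is to pin down one scalar from the hypothesis and then show that $T$ is uniformly close to it in the $g$-sense. Fix $\varepsilon>0$ and apply the hypothesis with $X=\LP$ to obtain $X_1\subseteq\LP$, $X_1\simeq\LP$, and $\lambda_\varepsilon:=\lambda(X_1)$ with $g(\lambda_\varepsilon I-T,X_1)<\varepsilon$; this $\lambda_\varepsilon$ will be the required scalar. Since $g(\lambda_\varepsilon I-T,\LP)=\sup\{f(\lambda_\varepsilon I-T,Y):Y\subseteq\LP,\ Y\simeq\LP\}$, it is enough to show that every such $Y$ contains a unit vector $y$ with $\|(\lambda_\varepsilon I-T)y\|<D_p\varepsilon$. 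Applying the hypothesis to $Y$ produces $Y_1\subseteq Y$, $Y_1\simeq\LP$, and $\mu:=\lambda(Y_1)$ with $g(\mu I-T,Y_1)<\varepsilon$; since $f(\mu I-T,Y_1)\le g(\mu I-T,Y_1)<\varepsilon$ there is a unit vector $y_1\in Y_1\subseteq Y$ with $\|(\mu I-T)y_1\|<\varepsilon$. So everything reduces to the quantitative rigidity estimate $|\lambda_\varepsilon-\mu|\le C_p\varepsilon$ with $C_p$ depending only on $p$ --- the $\varepsilon$-analogue of the rigidity behind Lemma \ref{UNCLemma} --- after which $\|(\lambda_\varepsilon I-T)y_1\|<(1+C_p)\varepsilon=:D_p\varepsilon$. (Here and below $C_p,K_p,c_p$ denote positive constants depending only on $p$, which may change from line to line.)

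The engine for the rigidity estimate is the following strengthening of the hypothesis, which I would prove first: \emph{there is a constant $K_p$ such that whenever $V\subseteq\LP$, $V\simeq\LP$ and $g(\nu I-T,V)<\varepsilon$, there is a subspace $V'\subseteq V$, $V'\simeq\LP$, with $\|(\nu I-T)|_{V'}\|\le K_p\varepsilon$.} If $(\nu I-T)|_V$ is $\LP$-strictly singular this is immediate from Lemma \ref{smallnormLemma}, which in fact makes the norm arbitrarily small. Otherwise $(\nu I-T)$ is an isomorphism on some $G\subseteq V$ with $G\simeq\LP$; using the ``moreover'' part of Theorem \ref{thm:stabilization} together with the equivalence (with constants depending only on $p$) of the square-function norm $|\|\cdot\||_p$ with $\|\cdot\|_p$, I would first replace $G$ by a subspace $C_p$-isomorphic to $\LP$, and then apply Theorem \ref{thm:stabilization} to the resulting sign embedding to get $V'\subseteq G$, $C_p$-isomorphic to $\LP$, on which some scalar multiple $c(\nu I-T)$ is a $K_p$-isomorphism. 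Then on $V'$ one has $\|(\nu I-T)x\|\ge(cC_pK_p)^{-1}\|x\|$ and $\|(\nu I-T)x\|\le(C_pK_p/c)\|x\|$; but $V'\subseteq V$ forces $f(\nu I-T,V')<\varepsilon$, whence $(cC_pK_p)^{-1}<\varepsilon$, so $c^{-1}<C_pK_p\varepsilon$ and therefore $\|(\nu I-T)|_{V'}\|\le C_pK_p/c<(C_pK_p)^2\varepsilon$, proving the claim.

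Granting this, the rigidity estimate runs as follows. Using the strengthening, pass to $X_1'\subseteq X_1$ and $Y_1'\subseteq Y_1$ with $\|(\lambda_\varepsilon I-T)|_{X_1'}\|\le K_p\varepsilon$ and $\|(\mu I-T)|_{Y_1'}\|\le K_p\varepsilon$. By Lemma \ref{disjointifyLemma} (followed, if necessary, by a further thinning to keep all constants dependent on $p$ alone) obtain $A\subseteq X_1'$ and $B\subseteq Y_1'$, both $C_p$-isomorphic to $\LP$, with $d(A,B)\ge c_p>0$. Fix an isomorphism $\tau\colon A\to B$ with $\|\tau\|,\|\tau^{-1}\|\le C_p$ and put $Z=\{a+\tau a:a\in A\}$; since $d(A,B)>0$ the map $a\mapsto a+\tau a$ is an isomorphism of $A$ onto $Z$, so $Z\simeq\LP$. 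Applying the hypothesis to $Z$ and then the strengthening yields $Z'=\{a+\tau a:a\in W\}\subseteq Z$ with $W\subseteq A$, $W\simeq\LP$, and a scalar $\nu$ with $\|(\nu I-T)|_{Z'}\|\le K_p\varepsilon$. For any unit vector $a_0\in W$, each of $(\lambda_\varepsilon I-T)a_0$ (as $a_0\in A$), $(\mu I-T)\tau a_0$ (as $\tau a_0\in B$) and $(\nu I-T)(a_0+\tau a_0)$ (as $a_0+\tau a_0\in Z'$) has norm at most $C_p\varepsilon$, so from the identity
\[ (\lambda_\varepsilon I-T)a_0+(\mu I-T)\tau a_0-(\nu I-T)(a_0+\tau a_0)=(\lambda_\varepsilon-\nu)a_0+(\mu-\nu)\tau a_0 \]
we get $\|(\lambda_\varepsilon-\nu)a_0+(\mu-\nu)\tau a_0\|\le C_p\varepsilon$. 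Factoring out $(\lambda_\varepsilon-\nu)a_0$ and using $a_0\in A$, $\tau a_0\in B$, $d(A,B)\ge c_p$ gives $|\lambda_\varepsilon-\nu|\le C_p\varepsilon$; factoring out $(\mu-\nu)\tau a_0$ and using in addition $d(B,A)\ge c_p/2$ and $\|\tau a_0\|\ge C_p^{-1}$ gives $|\mu-\nu|\le C_p\varepsilon$. Hence $|\lambda_\varepsilon-\mu|\le C_p\varepsilon$, which completes the proof.

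The step I expect to be the genuine obstacle is the strengthening of the hypothesis --- the passage from ``$g(\nu I-T,V)$ small'' to ``$\nu I-T$ small in operator norm on an isomorphic copy of $\LP$''. A direct attempt loses control of the constants through the Banach--Mazur distances to $\LP$ of the various isomorphic copies of $\LP$ that occur, and to keep every constant dependent on $p$ alone one needs the near-isometric (``moreover'') part of Theorem \ref{thm:stabilization}, which lets one work throughout with copies of $\LP$ sitting almost isometrically inside $\LP$. The other two ingredients --- the reduction of the first paragraph and the graph computation of the third --- are routine perturbation bookkeeping.
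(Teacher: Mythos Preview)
Your proof is correct and follows essentially the same route as the paper's. The ``strengthening'' you isolate --- passing from $g(\nu I-T,V)<\varepsilon$ to a subspace on which $\nu I-T$ has norm $\le K_p\varepsilon$ via Theorem~\ref{thm:stabilization} --- is exactly what the paper does inline (it assumes, ``using Theorem~\ref{thm:stabilization} and passing to a subspace if necessary'', that $(\lambda(Y_i')I-T)|_{Y_i'}$ is a $K_p$-isomorphism, and then deduces the norm bound $\|(\lambda(Y_i')I-T)|_{X_i}\|<d_p\varepsilon$ from $g<\varepsilon$); the graph construction $Z=\{x+\tau x\}$ and the resulting estimate $|\lambda_\varepsilon-\mu|\le C_p\varepsilon$ are likewise identical in substance. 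The only cosmetic differences are that the paper handles the degenerate case $g=0$ by perturbing $\lambda$ (continuity of $g$ in $\lambda$) rather than by invoking Lemma~\ref{smallnormLemma}, and that the paper extracts the final inequality $|\lambda(Y_1')-\lambda(Y_2')|\le c_p\varepsilon$ from the bounded projection onto $X_1+X_2$ rather than directly from $d(A,B)\ge c_p$; these are equivalent, and your parenthetical ``followed, if necessary, by a further thinning'' correctly flags that Lemma~\ref{disjointifyLemma} as stated gives only $d>0$ together with a $p$-bounded projection, from which the uniform lower bound on $d$ follows.
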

\begin{proof}
From the assumption in the statement of the lemma, without loss of generality we may assume that for every $X\subseteq \LP$, $X\simeq\LP$, and every $\varepsilon >0$ there exist  $X_1\subset X$, $X_1\simeq\LP$, and $\lambda = \lambda(X_1)$ such that $g(\lambda I - T, X_1)<\varepsilon$ and $(\lambda I - T)_{|X_1}$ is an
isomorphism. If $g(\lambda I - T, X_1)>0$ this can be achieved by passing to a subspace $Y_1$ of $X_1$ for which $f(\lambda I - T, Y_1)>0$.\\
If $g(\lambda I - T, X_1)=0$ for each $X_1\subset X$, $X_1\simeq\LP$, using the
fact that $g$ is a continuous function of $\lambda$, we find $\lambda_0$ such that $0<g(\lambda_0 I - T, X)<\varepsilon$ and then find $X_1\subset X$, $X_1\simeq\LP$ such that $f(\lambda I_0 - T, X_1)>0$.\\
Fix an $\varepsilon >0$ and let $Y_1$ and $Y_2$ be any two subspaces of $\LP$ such that $Y_1\simeq Y_2\simeq\LP$.
From our assumptions, there exist complemented subspaces $Y'_1, Y'_2$ such that $Y'_i\subseteq Y_i$, $Y'_i\simeq\LP$, and $g(\lambda(Y'_i) I - T, Y'_i)<\varepsilon$ for $i=1,2$ and without loss of generality, using Theorem \ref{thm:stabilization} and passing to a subspace if necessary, we may assume that $Y'_1$ and $Y'_2$ are $K_p$ complemented in $\LP$ and $((\lambda(Y'_i) I - T)_{|Y'_i}$ is a $K_p$-isomorphism for $i=1,2$. Then we apply Lemma \ref{disjointifyLemma} to get subspaces $X_1, X_2$ such that
\begin{itemize}
\item $X_i\subseteq Y'_i$, $X_i\simeq\LP$ for $i=1,2$
\item $X_1\cap X_2 = \{0\}$
\item $X_1$ and $X_2$ are $d_p$ complemented and $d_p$ isomorphic to $\LP$ via $\lambda(Y'_1) I - T$ and $\lambda(Y'_2) I - T$, respectively, for some constant $d_p$ depending only on $p$ (this follows from Lemma \ref{disjointifyLemma}  and our choice of $Y'_1$ and $Y'_2$)
\item $X_1+X_2$ is closed  and complemented subspace of $\LP$ and
there exists a projection onto $X_1+X_2$ with norm depending  only on $p$

\end{itemize}
 Since $X_i\subseteq Y'_i$, $i=1,2$, we have $g(\lambda(Y'_i) I - T, X_i)<\varepsilon$ for $i=1,2$ which in view of our choice of $X_1$ and $X_2$ implies
\begin{equation}\label{normbound}
\max (\|(\lambda(Y'_1)I-T)_{|X_1}\|, \|(\lambda(Y'_2)I-T)_{|X_2}\|) <d_p\varepsilon.
\end{equation}
Our goal is to show that $|\lambda(Y'_1)-\lambda(Y'_2)|<c_p\varepsilon$ for some constant $c_p$ independent of $Y'_1$ and $Y'_2$.

Let $\tau\colon X_1\to X_2$ be an isomorphism such that $\|\tau\|\leq d_p^2$ and $\|\tau^{-1}\|=1$. Define
$$
Z = \{ x+\tau x\,\,|\,x\in X_1\}.
$$
By assumption, there exists $Z'\subseteq Z$, $Z'\simeq\LP$, and  $\lambda(Z')$ such that  $0<g(\lambda(Z') I - T, Z')<\varepsilon$ and $\lambda(Z') I - T$ is an isomorphism on $Z'$ (like the argument in the beginning of the proof). Using Theorem \ref{thm:stabilization} we find $Z''\subseteq Z'$ such that $Z''$ is $K_p$ isomorphic to $\LP$ via $\lambda(Z') I - T$ (clearly $g(\lambda(Z') I - T, Z'')<\varepsilon$). Let $U = \lambda(Z') I - T$ and
define an operator $S\colon Z' \to \LP$ by $S(x+\tau x) = \lambda(Y'_1)x + \lambda(Y'_2)\tau x -Tx - T\tau x$.
A simple application of the triangle inequality combined with (\ref{normbound})  implies
\begin{equation}\label{eq:est1}
\|S(x+\tau x)\|\leq d_p^3\varepsilon (\|x\| + \|\tau x\|).
\end{equation}
From our choice of $Z''$ we also have
\begin{equation}\label{eq:est2}
\|\lambda (Z')(x+\tau x) - T(x+\tau x)\|\leq K_p\varepsilon(\|x\| + \|\tau x\|)
\end{equation}
and combining (\ref{eq:est1}) and (\ref{eq:est2}) gives us
\begin{equation}\label{eq:est3}
\|(U-S)(x+\tau x)\| \leq (K_p+d_p^3)\varepsilon (\|x\| + \|\tau x\|)\leq (K_p+d_p^3)(1+d_p^2)\varepsilon \|x\|.
\end{equation}
On the other hand
\begin{equation}\label{eq:est4}
\begin{split}
\|(U-S)(x+\tau x)\| &= \|(\lambda (Z')-\lambda (Y'_1))x + (\lambda (Z')-\lambda (Y'_2))\tau x\|\geq A_p (|\lambda (Z')-\lambda (Y'_1)|\|x\|+ |\lambda (Z')-\lambda (Y'_2)|\|\tau x\|)\\
&\geq A_p (|\lambda (Z')-\lambda (Y'_1)|+ |\lambda (Z')-\lambda (Y'_2)|)\|x\|) \geq A_p|\lambda (Y'_2)-\lambda (Y'_1)|\|x\|
\end{split}
\end{equation}
 ($A_p$ depends on $d_p, K_p$ and the norm of the projection onto $X_1+X_2$ which also depends on $p$ only). Combining (\ref{eq:est1}) and (\ref{eq:est2}) we get $|\lambda(Y'_1) - \lambda(Y'_2)|<\frac{(K_p+d_p^3)(1+d_p^2)}{A_p} \varepsilon$.
Now we define $\lambda_\varepsilon = \lambda (Y'_1)$ and it is not hard to check
that $g(\lambda_\varepsilon I - T, \LP)\leq (1+\frac{(K_p+d_p^3)(1+d_p^2)}{A_p}) \varepsilon$.
\end{proof}

\begin{remark} Let $\pbounds$.
Suppose that there exists a sequence of numbers $\seq{\lambda}$ such that
$g(\lambda_nI-T,\LP)\xrightarrow[n\to\infty]{} 0$. Then there exists $\lambda$ such that $g(\lambda I-T,\LP) = 0$. This is easy to see by noticing that $g(\lambda I-T,\LP)$ is bounded away from $0$ for large $\lambda$, so without loss of generality we can assume that the $\lambda_n\xrightarrow[n\to\infty]{} \lambda$. Then, if $g(\lambda I-T,\LP)=4\delta >0$, there exists  $Y\subseteq\LP$, $Y\simeq\LP$, such that $f(\lambda I-T,Y)>2\delta$. Now if $|\lambda - \mu|<\delta$, then $f(\mu I-T,Y)>\delta$  and hence $g(\mu I-T,\LP)>\delta$ which contradicts our original assumption about the sequence $\seq{\lambda}$.
\end{remark}

From the last remark it trivially follows that if $T\in\opLP$, $\pbounds$, is such that $\lambda I -T$ preserves a copy of $\LP$ for every  $\lambda$, then $\displaystyle \inf_{\lambda} g(\lambda I-T,\LP)>0$.

\begin{lemma}\label{auxlemma}
Let $T\in\opLP$, $1\leq p<2$, and assume that $\lambda I-T$ preserves a copy of $\LP$ for every $\LCN$. Then there exists $\varepsilon >0$ and a subspace $X\subseteq\LP$, $X\simeq\LP$, such that for every $X^{'}\subseteq X$, $X^{'}\simeq\LP$ , and every $\LCN$ we have $g(\lambda I-T, X^{'})>\varepsilon$.
\end{lemma}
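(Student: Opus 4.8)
The plan is to prove this by contradiction, using Lemma~\ref{MainLemma} together with the remark immediately preceding the statement. Suppose the asserted $\varepsilon$ and $X$ do not exist. Unwinding the negation, this says: for every $\varepsilon>0$ and every subspace $X\subseteq\LP$ with $X\simeq\LP$, there is a subspace $X_1\subseteq X$ with $X_1\simeq\LP$ and a scalar $\lambda=\lambda(X_1)$ for which $g(\lambda I-T,X_1)\le\varepsilon$. Running this with $\varepsilon/2$ in place of $\varepsilon$ makes the inequality strict, and what remains is verbatim the hypothesis of Lemma~\ref{MainLemma}.

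Thus I would invoke Lemma~\ref{MainLemma} to conclude that for every $\varepsilon>0$ there is a scalar $\lambda_\varepsilon$ with $g(\lambda_\varepsilon I-T,\LP)<D_p\varepsilon$, where $D_p$ depends only on $p$. Taking $\varepsilon=1/n$ and setting $\lambda_n=\lambda_{1/n}$ produces a sequence of scalars with $g(\lambda_n I-T,\LP)\to 0$. By the remark preceding this lemma (which uses only the continuity of $\lambda\mapsto g(\lambda I-T,\LP)$ and the fact that $g$ is bounded below for large $|\lambda|$), the existence of such a sequence forces the existence of a single $\lambda\in\CN$ with $g(\lambda I-T,\LP)=0$.

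Finally, by the basic dichotomy recorded right after the definitions of $f$ and $g$, the equality $g(\lambda I-T,\LP)=0$ means that $(\lambda I-T)_{|\LP}$ is $\LP$-strictly singular, so $\lambda I-T$ does not preserve a copy of $\LP$. This contradicts the standing assumption of the lemma that $\lambda I-T$ preserves a copy of $\LP$ for every $\LCN$, and hence the desired $\varepsilon$ and $X$ must exist.

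I do not expect a genuine obstacle here: the substantive content has already been packaged into Lemma~\ref{MainLemma} (which in turn rests on Theorem~\ref{thm:stabilization} and Lemma~\ref{disjointifyLemma}) and into the continuity remark. The one place to be slightly careful is the bookkeeping of quantifiers when reading off the hypothesis of Lemma~\ref{MainLemma} from the negated conclusion --- specifically, that the negation supplies a subspace with arbitrarily small $g$ \emph{inside any prescribed} $X$, which is exactly the localized form Lemma~\ref{MainLemma} requires.
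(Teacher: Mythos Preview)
Your argument is correct and is exactly the route the paper takes: the paper's proof consists of the single sentence that the result is ``straightforward from Lemma~\ref{MainLemma} and the last remark,'' and you have simply unpacked that sentence. The quantifier bookkeeping you flag is handled correctly.
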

\begin{proof}
The proof is a straightforward from Lemma \ref{MainLemma} and the last remark.
\end{proof}

The next result of this section is a reduction lemma that will help us later.
\begin{lemma}\label{reductionlemma}
Let $T,P \in\opLP$, $\pbounds$, be such that $P$ is a projection satisfying $P\LP\simeq\LP$ and  $(I-P)TP$ is an isomorphism on a subspace $X\subseteq\LP,\, X\simeq\LP$. Then there exists $Z\subset X$ such that $Z\simeq TZ\simeq\LP,\, d(TZ,Z)>0$, and $Z+TZ$ is a subspace isomorphic to $\LP$ and complemented in $\LP$.
\end{lemma}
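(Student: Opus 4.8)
The plan is to reduce immediately to the case $X\subseteq P\LP$ and then extract $Z$ from $X$ so that the $(I-P)\LP$-component of $T_{|Z}$ is bounded below; this single fact yields all three conclusions after routine bookkeeping, and, pleasantly, the argument needs neither Theorem \ref{thm:stabilization} nor Lemma \ref{smallnormLemma}, so it works for all $\pbounds$ as stated. For the reduction, note that $P_{|X}$ is bounded below: there is $c>0$ with $\|(I-P)TPx\|\ge c\|x\|$ for $x\in X$, while $\|(I-P)TPx\|\le\|(I-P)T\|\,\|Px\|$, so $\|Px\|\ge (c/\|(I-P)T\|)\|x\|$; hence $PX$ is a closed subspace of $P\LP$ isomorphic to $\LP$, and since $(I-P)TP(Px)=(I-P)TPx$ the hypothesis persists with $X$ replaced by $PX$. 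So assume $X\subseteq P\LP$, write $V=P\LP$, $W=(I-P)\LP$ (so $\LP=V\oplus W$ with $P$ the projection onto $V$), and set $S=(I-P)TP$. For $x\in X\subseteq V$ we have $Px=x$, so the $W$-component of $Tx$ is exactly $Sx=(I-P)Tx$, with $\|Sx\|\ge c\|x\|$.

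Since $S_{|X}$ is an isomorphism, $SX\simeq\LP$ and $SX\subseteq W$, so by the results quoted in Section 3 (\cite[Theorem 9.1]{JMST} for $p>1$, \cite[Theorem 1.1]{Rosenthal_L1} for $p=1$) $SX$ contains a copy of $\LP$ complemented in $\LP$; pulling it back by $S_{|X}^{-1}$ we get $Z\subseteq X$ with $Z\simeq\LP$ and $SZ$ complemented in $\LP$ (then $Z$ itself is complemented, e.g. via $(S_{|Z})^{-1}QS$ for any projection $Q$ of $\LP$ onto $SZ$). For $z,z'\in Z$ one has $(I-P)(z'-Tz)=-Sz$, hence $\|z'-Tz\|\ge\|Sz\|/\|I-P\|\ge (c/\|I-P\|)\|z\|$. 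Taking $z'=0$ shows $T_{|Z}$ is bounded below, so $TZ\simeq Z\simeq\LP$; and for $z'\in S_Z$ this estimate together with $\|z'-Tz\|\ge 1-\|T\|\,\|z\|$ gives $d(Z,TZ)\ge\min\{1/2,\,c/(2\|T\|\,\|I-P\|)\}>0$.

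For the complementation of $Z+TZ$, I would first choose the projection $Q$ of $\LP$ onto $SZ$ to vanish on $V$, which is possible because $SZ\subseteq W$ is complemented in $W$: take $Q=Q_0(I-P)$ with $Q_0\colon W\to SZ$ a projection. Put $A=PT(S_{|Z})^{-1}\colon SZ\to V$, so $Tz=Sz+A(Sz)$ and $TZ=(I+A)(SZ)$, and let $U=I+AQ$. Since $A$ maps into $V=\ker Q$ we have $QA=0$, hence $(AQ)^2=0$, so $U$ is invertible with $U^{-1}=I-AQ$; moreover $U$ fixes $V$ pointwise (so $UZ=Z$) and carries $SZ$ onto $(I+A)(SZ)=TZ$. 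Thus $Z+TZ=U(Z\oplus SZ)$, the sum being direct since $Z\subseteq V$ and $SZ\subseteq W$. Finally $Z\oplus SZ$ is complemented in $\LP$ via $\rho=P_ZP+Q(I-P)$, where $P_Z$ is a projection onto $Z$ with range in $V$ (one checks $\rho$ is idempotent with range $Z\oplus SZ$), and $Z\oplus SZ\simeq\LP\oplus\LP\simeq\LP$; hence $Z+TZ$ is complemented via $U\rho U^{-1}$ and isomorphic to $\LP$.

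I do not expect a genuine analytic difficulty here: the lemma just packages the structure theory of complemented copies of $\LP$ in $\LP$ (used once, to make $SZ$ — equivalently the $W$-component of $TZ$ — complemented, which is what lets one straighten $TZ$ back to $SZ$) together with linear-algebra bookkeeping. The only points needing care are the reduction to $X\subseteq P\LP$ and arranging the projections so that everything fits, namely choosing $Q$ to kill $V$, checking $(AQ)^2=0$, and assembling the projection onto $Z\oplus SZ$.
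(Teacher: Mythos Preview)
Your proof is correct and takes a genuinely different route from the paper's. Both begin with the same reduction to $X\subseteq P\LP$ and pass to a complemented copy of $\LP$, but then diverge. The paper replaces $T$ by the invertible operator $T_1=I+\tfrac{1}{2\|T\|}T$, uses invertibility of $T_1$ to get $T_1X$ complemented, verifies $d(X,T_1X)>0$ by the same $(I-P)$-component estimate you use, and then transfers everything back to $T$ via Proposition~\ref{distanceprop} together with the algebraic identity $X+TX=X+T_1X$. You instead work directly with $T$: after shrinking to $Z$ so that $SZ=(I-P)TZ$ is complemented, you write $Tz=Sz+A(Sz)$ and realize $Z+TZ$ as the image of the transparently complemented $Z\oplus SZ$ under the explicit shear $U=I+AQ$, where $(AQ)^2=0$ forces $U$ invertible. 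Your argument is more self-contained on the complementation of $Z+TZ$ (you actually write down the projection) and bypasses Proposition~\ref{distanceprop}; the paper's perturbation trick is slicker in outline but leaves the complementation of $X+T_1X$ less explicitly justified.
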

\begin{proof}
Without loss of generality we may assume that $X\subseteq P\LP$ since $(I-P)T$ is an isomorphism on $PX$ and $PX\simeq (I-P)TPX\simeq X\simeq \LP$. Also without loss of generality, by passing to a subspace if necessary, we may assume that $X$ is complemented in $\LP$. \\
First we show that the conclusion of the lemma holds for
$T_1 = \frac{1}{2\|T\|}T + I$.  Clearly $T_1$ satisfies the assumptions of the lemma with the same subspace $X$ as in the statement. Since $T_1$ is an onto isomorphism, $T_1X$ is a complemented subspace of $\LP$ from which it is clear that $X+T_1X = T_1X$ is complemented.\\
To prove $d(X, T_1X)>0$ let $x\in S_X$ and $y\in X$. Then
$$
\|x-T_1y\|\geq \frac{\|(I-P)(x-T_1y)\|}{\|I-P\|} = \frac{\|(I-P)T_1y\|}{\|I-P\|} = \frac{\|(I-P)T_1Py\|}{\|I-P\|}\geq \frac{c}{\|I-P\|}\|y\|
$$
where $c$ is such that $\|(I-P)T_1Px\|\geq c\|x\|$ for all $x\in X$. If $\|T_1y\|\leq 1/2$ then $\|x-T_1y\|\geq 1/2$. Otherwise we have $\|y\| > \frac{1}{2\|T_1\|}$ and hence $\|x-T_1y\|>\frac{c}{2\|I-P\|\|T_1\|}$. From these estimates we can conclude that
\begin{equation}\label{eq:distX}
d(X,T_1X)\geq \max \left (\frac{1}{2}, \frac{c}{2\|I-P\|\|T_1\|}\right )>0.
\end{equation}
Now using Proposition \ref{distanceprop} for $T_1$ and $X$ we obtain that
$\frac{1}{2\|T\|}T$ is an isomorphism on $X$ and $d(\frac{1}{2\|T\|}TX,X)>0$, or equivalently, $T$ is an isomorphism on $X$ and $d(TX,X)>0$.
Finally, $X+TX = X+\frac{1}{2\|T\|}TX = X+(\frac{1}{2\|T\|}TX+I)X = X+T_1X$ hence $X+TX$ is complemented.
\end{proof}


 \begin{proof}[Proof of Theorem \ref{movingthm} for $L_p$, $1<p<2$ ]
In view of Lemma \ref{UNCLemma}, we can apply Lemma \ref{auxlemma} for $T$ and
let $X$ and $\varepsilon$ be the one from Lemma \ref{auxlemma}. Without loss of generality we may assume that $X$ is complemented (otherwise we may pass to a complemented subspace). Let $V$ be an isomorphism from $\LP$ into $\LP$ such that $V\LP = X$. \\
Fix $\delta>0$ to be chosen later. We will build sequences $\seq{y}$ in $X$, and   $\seq{a}$ and $\seq{b}$ in $\LP$ such that:
\begin{enumerate}
\item $\seq{a}$ and $\seq{b}$ are block bases of $\HaarXX$ such that if we denote $\sigma_i = \supp\{a_i\}\cup\supp\{b_i\}$, where the support is with respect to the Haar basis, then $\seq{\sigma}$ is a disjoint sequence of subsets of  $\HaarXX$
\item $\seq{y}$ is equivalent to the Haar basis for $\LP$
\item $\|y_i-a_i\|<\frac{\delta}{2^{i}}\|y_i\|$ and $\|Ty_i-b_i\|<\frac{\delta}{2^{i}}\|Ty_i\|$ for all  $1\leq i<\infty$.
\end{enumerate}
The construction of these sequences is similar the construction of Lemma \ref{disjointifyLemma} and we sketch it below for completeness. As before, by $P_{(k,s)}$ we denote the projection onto the linear span of $\{h_{n,i}\}_{n=k,\, i=1}^{s\,\,\,\,\,\,\,\, 2^n}$.

Let $y_1 = Sh_{1,1}$. There exists $n_1$ such that $\|y_1 - P_{(1,n_1)}y_1\|<\frac{\delta}{2}\|y_1\|$
and $\|Ty_1 - P_{(1,n_1)}Ty_1\|<\frac{\delta}{2}\|Ty_1\|$. Let $a_1 = P_{(1,n_1)}y_1$ and  $b_1 = P_{(1,n_1)}Ty_1$.
Denote $A_1^{+} = \{x: V^{-1}y_1(x) = 1\}$ and $A_1^{-} = \{x: V^{-1}y_1(x) = -1\}$. Let $\seq{z}$ be the Rademacher sequence on $A_1^{+}$ and $\seq{z'}$ be the Rademacher sequence on $A_1^{-}$. Using the fact  that the Rademacher sequence is weakly null, we find $l$ such that
\begin{equation*}
\begin{split}
\|Sz_l-P_{(n_1,\infty)}Sz_l\|&<\frac{\delta}{16}\|Sz_l\|\\
\|TSz_l-P_{(n_1,\infty)}TSz_l\|&<\frac{\delta}{16}\|TSz_l\|\\
\|Sz'_l-P_{(n_1,\infty)}Sz'_l\|&<\frac{\delta}{16}\|Sz'_l\|\\
\|TSz'_l-P_{(n_1,\infty)}TSz'_l\|&<\frac{\delta}{16}\|TSz'_l\|.
\end{split}
\end{equation*}
Define $y_2 = Sz_l$, $y_3 = Sz'_l$ and find $n_2$ such that $\|y_k-P_{(n_1,n_2)}y_k\|<\frac{\delta}{8}\|y_k\|$  and
$\|Ty_k-P_{(n_1,n_2)}Ty_k\|<\frac{\delta}{8}\|Ty_k\|$ for $k=2,3$. As before, let $a_k = P_{(n_1,n_2)}y_k$ and $b_k = P_{(n_1,n_2)}Ty_k$ for $k = 2,3$.\\
Continuing this way, we build the other elements of the sequences $\seq{y}$, $\seq{a}$ and $\seq{b}$.
From the construction it is clear that $\{(\supp y_i\cup\supp Ty_i)\}_{i=1}^{\infty}$ are  essentially disjoint. If we denote $Y = \overline{\mathrm{span}} \{y_i\,:\, i = 1,2,\ldots \}$ we have that $Y$ is a complemented subspace of $\LP$ which is also isomorphic to $\LP$. To see this it is enough to notice that $V^{-1}Y$ is isometric to $\LP$ (since it is spanned by a sequence which is isometrically equivalent to the Haar basis) and hence complemented in $\LP$. One projection onto $Y$ is given by $P_Y = VP_{V^{-1}Y}V^{-1}P_X$. From now on, without loss of generality we assume that $X=Y$ (since we can pass to a subspace in the beginning if necessary).\\
As  in the argument in Lemma \ref{disjointifyLemma}, using the principle of small perturbations, it is easy to see
that the subspace $A = \overline{\mathrm{span}} \{a_i\,:\, i = 1,2,\ldots \}$ is complemented. A projection onto $A$ is given by $P'_A = GP_YG^{-1}$ where $G\in\opLP$ is defined by
$$
G = I - \sum_iy_i^{*}(x)(y_i - a_i).
$$
Using Proposition \ref{respsuppproj} we find a projection $P_A$ onto $A$ that respect supports.
Let $S : A\to \LP$ be the operator defined by
$$
S = I - \sum_ia_i^{*}(x)(a_i - y_i)
$$
and note that  $SA = Y$($\equiv X$) and $\|I-S\|<4\delta$.
Let also $S' : A\to \LP$ be the operator defined by $S'a_i = Ta_i-b_i$ and let $T' = T - S'P_A$.
It follows easily that $\|S'\|<C\delta\|T\|$, where $C$ depends only on $p$, since
$$
\|S'a_i\| = \|Ta_i-b_i\| = \|T(a_i - y_i) + Ty_i-b_i\| < \frac{2\delta \|T\|\|y_i\|}{2^{i}}.
$$
First we show that $(I-P_A)T'P_A$ preserves a copy of $\LP$.\\
If not, we have that  $(I-P_A)T'P_A$ is $\LP$-strictly singular and hence $P_AT'P_A$ preserves a copy of $\LP$ (otherwise $T'_{|A}$ will be $\LP$-strictly singular which is false). We also have the inequality $g(P_AT'P_A, A)>\varepsilon/2$. In order to show it we need to go back to the definitions of $f(\cdot, \cdot)$ and $g(\cdot, \cdot)$. Fix $A'\subseteq A$ and $\LCN$. We shall show that $\displaystyle g(\lambda I+T', A')>\frac{\varepsilon}{2}$. We may assume that $|\lambda|<\|T'\|+1$(otherwise $f(\lambda I-T',A')>1$). Let $x\in S_{A'}$ be arbitrary.
\begin{equation*}
\begin{split}
\|(\lambda I - T')x\|& \geq \|(\lambda I - T')Sx\|  - \|(\lambda I - T')(x-Sx)\| \geq
\|(\lambda I - T)Sx\| -  \|S'P_ASx\| - \|(\lambda I - T')(x-Sx)\|\\
& \geq (1-4\delta)f((\lambda I - T),A') - C\delta\|T\|\|P_A\|(1+4\delta) - 4(2\|T'\|+1)\delta.
\end{split}
\end{equation*}
Taking  infimum over the left side we obtain $f((\lambda I - T'),A')>f((\lambda I - T),A')/2$ for sufficiently small $\delta$ and hence $g(\lambda I - T',A')>\varepsilon/2$ for every $\LCN$.
Using the fact that $(I-P_A)TP_A$ is an $\LP$-strictly singular operator and Proposition \ref{PerturbationProp} we obtain
$$
g(P_AT'P_A, A) + g((I-P_A)T'P_A,A) = g(T'P_A,A)>\frac{\varepsilon}{2}.
$$
Let $P_AT'a_i = \lambda_ia_i$ (we can do that since $P_A$ respect supports). Till the end of this proof it will be convenient to switch the enumeration of the $\seq{a}$ to $\dseq{a}$, which is actually how we constructed them.
For each $n$, using the pigeon-hole principle, we can find a set $\sigma(n)$ with cardinality at least
$\displaystyle \frac{\varepsilon 2^n}{4\|P_AT\|} $  and a number $\mu_n$ such that  $\displaystyle |\mu_n  - \lambda_i|<\frac{\varepsilon}{4}$ for every  $i\in\sigma(n)$. Clearly, there exists an infinite subset $N_1\subseteq\N$  and a number $\mu_\e$ such that
$$
\sum_{n\in N_1}|\mu_n - \mu_\e| < \frac{\varepsilon}{100}.
$$
Let $Z = \overline{\mathrm{span}} \{a_{n,i}\, :\, n\in N_1, i\in\sigma(n)\}$.
Using a result of Gamlen and Gaudet (see \cite{Gamlen_Gaudet}), we have that $Z\simeq\LP$ and clearly $Z$ is complemented in $\LP$.
Now note that
\begin{equation}\label{eq:c1}
\|(\mu_\e I-P_AT')_{|Z}\| < \frac{\varepsilon}{2}\,\,\mathrm{hence}\,\,g(\mu_\e I-P_AT',Z)< \frac{\varepsilon}{2}.
\end{equation}
On the other hand,
\begin{equation}\label{eq:c2}
\frac{\varepsilon}{2} < g(\mu_\e I-T',A) = g((\mu_\e I-P_AT') - (I-P_A)T',A) = g(\mu_\e I-P_AT',A)
\end{equation}
since $(I-P_A)T'_{|A}$ is $\LP$-strictly singular. The equations (\ref{eq:c1}) and (\ref{eq:c2}) lead to contradiction which shows that $(I-P_A)T'P_A$ preserves a copy of $\LP$, say $Z'$. Now $\|(I-P_A)T'P_A - (I-P_A)TP_A\| = \|(I-P_A)S'P_A\|<C\delta \|T\|(\|P_A\|+1)^2$ hence, for sufficiently small $\delta$, we have that $(I-P_A)TP_A$ is
an isomorphism on $Z'$ and clearly $d((I-P_A)TP_AZ',Z') > 0$. In view of Lemma \ref{reductionlemma} this finishes the proof.
\end{proof}


\section{Operators on $L_1$}

Recall that we have already proved Theorem \ref{compactrestrictionthm} in the case of $L_1$. The proof in this case does not involve anything new and can be done only using the ideas found in  \cite {Rosenthal_L1}, which we have already mentioned.

Now we switch   attention to the operators not of the form $\lambda I + K$ where $\LCN$ and $K$ is in ${\mathcal M}_{L_1}$.  Our investigation will rely on the representation Kalton gave for a general operator on $L_1$ in \cite{KaltonEnd}, but again Rosenthal's paper  \cite{Rosenthal_L1} is a better reference for us. Before we state Kalton's representation we need a few definitions.

\begin{definition}
An operator $T: L_1\to L_1$ is called an atom if $T$ maps disjoint functions to disjoint functions. That is,
if $\mu(\supp f\cap\supp g)=0$ then $\mu(\supp Tf\cap\supp Tg)=0$.
\end{definition}

Unlike the notation for $\strictpbounds$, here $\supp$ refers to the support with respect to the interval $[0,1]$. A simple characterization of the atoms is given by the following known structural result.
\begin{proposition}[{\cite[Proposition 1.3]{Rosenthal_L1}}]
 An operator $T: L_1\to L_1$ is an atom if and only if there exist measurable functions
$a : (0,1)\to \RN$ and $\sigma : (0,1)\to\ (0,1)$ with $Tf(x) = a(x)f(\sigma x)$ a.e. for all $f\in L_1$.
\end{proposition}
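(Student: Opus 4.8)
The plan is to prove the two implications separately; the direction ``representation $\Rightarrow$ atom'' is immediate, and the real work is the converse. First, suppose $Tf(x)=a(x)f(\sigma x)$ a.e.\ for all $f$. If $f$ and $g$ have essentially disjoint supports, then for a.e.\ $x$ the numbers $f(\sigma x)$ and $g(\sigma x)$ cannot both be nonzero, so $(Tf)(x)(Tg)(x)=0$ a.e.; hence $\supp Tf$ and $\supp Tg$ are essentially disjoint, and $T$ is an atom.

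For the converse, assume $T$ is an atom and, for each Lebesgue measurable $A\subseteq(0,1)$, set $E_A=\supp(T\mathbf{1}_A)$, regarded as an element of the measure algebra of $[0,1]$. I would first record three properties of the assignment $A\mapsto E_A$. (a) It is well defined on the measure algebra: if $\mu(A)=0$ then $\mathbf{1}_A=0$ in $L_1$, so $T\mathbf{1}_A=0$ and $E_A=\emptyset$. (b) It is a Boolean homomorphism: for disjoint $A,B$ we have $\mathbf{1}_{A\cup B}=\mathbf{1}_A+\mathbf{1}_B$ and, $T$ being an atom, $E_A\cap E_B=\emptyset$, so $E_{A\cup B}=E_A\sqcup E_B$; decomposing $A=(A\cap B)\sqcup(A\setminus B)$ and $B=(A\cap B)\sqcup(B\setminus A)$, a one-line computation using disjointness gives $E_{A\cap B}=E_A\cap E_B$ and $E_{A^c}=E_{(0,1)}\setminus E_A$. (c) It is countably additive: if $A_n\uparrow A$, then $\mathbf{1}_{A_n}\to\mathbf{1}_A$ and $\mathbf{1}_{A\setminus A_n}\to 0$ in $L_1$, so along a subsequence both hold a.e.; since $T\mathbf{1}_A=T\mathbf{1}_{A_n}+T\mathbf{1}_{A\setminus A_n}$ has summands with disjoint supports, $|T\mathbf{1}_A|=|T\mathbf{1}_{A_n}|+|T\mathbf{1}_{A\setminus A_n}|$ a.e., and so a.e.\ point of $E_A$ eventually lies in $E_{A_n}$, giving $E_A=\bigcup_n E_{A_n}$ mod null.

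Next I would realize $A\mapsto E_A$ by a point map. Let $\{I_{n,k}\}_{k=1}^{2^n}$ be the level-$n$ dyadic intervals; for fixed $n$ the sets $E_{I_{n,k}}$ are pairwise disjoint with union $E_{(0,1)}$, and $E_{I_{n+1,k}}\subseteq E_{I_{n,k'}}$ whenever $I_{n+1,k}\subseteq I_{n,k'}$. After deleting a null set, every $x\in E_{(0,1)}$ lies in a unique nested chain $I_{n,k_n(x)}$ whose intersection is a single point $\sigma(x)$; extending $\sigma$ arbitrarily off $E_{(0,1)}$ gives a measurable $\sigma:(0,1)\to(0,1)$ with $E_D=\sigma^{-1}(D)$ mod null for dyadic $D$, hence, by (a)--(c) and a monotone-class argument, for all measurable sets. (Alternatively one may quote the classical fact that a $\sigma$-homomorphism of the measure algebra of a Lebesgue space is induced by a point transformation.) Now put $a=T\mathbf{1}_{(0,1)}\in L_1$. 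Splitting $T\mathbf{1}_{(0,1)}=T\mathbf{1}_A+T\mathbf{1}_{A^c}$, whose supports $\sigma^{-1}(A)$ and $(\sigma^{-1}A)^c$ are complementary, gives $T\mathbf{1}_A=a\cdot(\mathbf{1}_A\circ\sigma)$ a.e., and linearity extends this to simple functions. Since by (a) the map $\sigma$ pulls null sets back to null sets, $f\circ\sigma$ is a well-defined element of $L_\infty$ for $f\in L_\infty$; for general $f\in L_1$ one chooses simple $f_n\to f$ in $L_1$, passes to a subsequence with $f_n\to f$ a.e., whence $f_n\circ\sigma\to f\circ\sigma$ a.e.\ and $a\cdot(f_n\circ\sigma)\to a\cdot(f\circ\sigma)$ a.e., while $Tf_n\to Tf$ in $L_1$; comparing the limits yields $Tf=a\cdot(f\circ\sigma)$.

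The hard part will be the passage from the abstract homomorphism $A\mapsto E_A$ of measure algebras to an honest, pointwise-defined measurable $\sigma$, together with the null-set bookkeeping that makes $\mathbf{1}_A\circ\sigma$ and $f\circ\sigma$ legitimate and lets the density argument close; once that is in place the remaining steps are routine.
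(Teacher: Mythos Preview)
The paper does not give its own proof of this proposition; it is quoted verbatim as \cite[Proposition~1.3]{Rosenthal_L1} and used as a black box in Section~4, so there is nothing in the paper to compare your argument against.

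That said, your argument is essentially the standard one and is correct. The forward implication is immediate. For the converse you build the set map $A\mapsto E_A=\supp(T\mathbf{1}_A)$, check it is a $\sigma$-complete Boolean homomorphism of measure algebras, lift it to a point map $\sigma$ via the dyadic tree (or via the classical lifting theorem for Lebesgue spaces), set $a=T\mathbf{1}_{(0,1)}$, and close by density. Two small points worth tightening: (i) the homomorphism lands in the measure algebra of $E_{(0,1)}$, not of $(0,1)$; this is harmless because $a$ vanishes off $E_{(0,1)}$, so the arbitrary extension of $\sigma$ there does not affect the formula; (ii) in the final limiting step you silently pass to a further subsequence so that $Tf_n\to Tf$ a.e.\ as well, which is needed to compare the two a.e.\ limits. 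With those cosmetic fixes the proof is complete.
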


In \cite{Rosenthal_L1} the following definition is given:
\begin{definition}
Let $T : L_1\to L_1$ be a given operator.\\
(a) Say that $T$ has {\it atomic part} if  there exists a non-zero atom $A : L_1\to L_1$ with $0\leq A\leq |T|$. \\
(b) Say that $T$ is {\it purely continuous} if $T$ has no atomic part.\\
(c) Say that $T$ is {\it purely atomic} if $T$ is a strong $\ell_1$-sum of atoms.
\end{definition}
The condition (c) in the preceding definition simply means that there is a sequence of atoms $\{T_j\}_{j=1}^{\infty}$ from $L_1$ to $L_1$ and $K<\infty$ so that for all $f\in L_1$, $\sum\|T_jf\|\leq K\|f\|$ and $Tf = \sum T_jf$.

Here is  Kalton's representation theorem for operators on $L_1$ the way it is stated in \cite{Rosenthal_L1}.
\begin{theorem}
Let $T : L_1\to L_1$ be a given operator. There are unique operators $T_a, T_c\in\opLOne$ so that $T_a$ is purely atomic, $T_c$ is purely continuous, and $T = T_a + T_c$. Moreover, there exists a sequence of atoms
$\{T_j\}_{j=1}^{\infty}$ so that $T_a$ is a strong $\ell_1$-sum of $\{T_j\}_{j=1}^{\infty}$ and the following four conditions hold
\begin{enumerate}
\item $\sum_{i=1}^{\infty} \|T_if\|\leq \|T_a||\|f\|$
\item $(T_if)(x) = a_i(x)f(\sigma_ix)$ a.e. where $a_i : (0,1)\to \RN$ are measurable functions,  and $\sigma_i : (0,1)\to (0,1)$
\item For all $i\neq j$, $\sigma_i(x)\neq \sigma_j(x)$ a.e.
\item $|a_j(x)|\geq |a_{j+1}(x)|$ a.e.
\end{enumerate}
\end{theorem}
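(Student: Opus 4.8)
The plan is to reconstruct Kalton's structure theorem for endomorphisms of $L_1$ (\cite{KaltonEnd}; the formulation above is taken from \cite{Rosenthal_L1}) in three stages. The foundational input is that every $T\in\opLOne$ is \emph{regular}: the modulus $|T|$, given on nonnegative $f$ by $|T|f=\sup\{|Tg|:|g|\le f\}$ with the supremum taken in the lattice $L_1$, exists and defines a bounded positive operator with $\|\,|T|\,\|=\|T\|$. This is what makes the relation $0\le A\le|T|$ in the definition of the atomic part meaningful, and it makes the space of operators on $L_1$ a Dedekind complete Banach lattice. Granting regularity, I would first record two elementary facts: \textbf{(a)} the modulus of an atom is again an atom (if $Af(x)=a(x)f(\sigma x)$ then $|A|f(x)=|a(x)|f(\sigma x)$ for $f\ge0$); and \textbf{(b)} an operator $R$ that is simultaneously purely atomic and purely continuous is $0$ — writing $R$ as a strong $\ell_1$-sum of atoms and first merging atoms that share a common $\sigma$ on a set of positive measure (two atoms with the same $\sigma$ add to an atom) so that the representing family becomes pairwise ``disjoint'' in the sense of (3), one gets $|R|=\sum_j|R_j|$, so if $R\ne0$ some $|R_j|$ is a nonzero atom below $|R|$, contradicting pure continuity. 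Fact (b) delivers the \emph{uniqueness} half: two decompositions $T=T_a+T_c=T_a'+T_c'$ of the stated form differ by $T_a-T_a'=T_c'-T_c$, which is purely atomic (a difference of strong $\ell_1$-sums of atoms is again one, since negatives of atoms are atoms and the $\ell_1$-bounds add) and purely continuous (the purely continuous operators form a band, hence are stable under subtraction), and so equals zero.

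For \emph{existence} together with the representation $T_a=\sum_jT_j$, I would run the extraction argument of \cite{Rosenthal_L1}. The key property of a family $\{A_k\}$ of pairwise disjoint atoms is that $|\sum_kA_k|=\sum_k|A_k|$; since the nonnegative functions $|A_k|\,|f|$ are $\ell_1$-additive in $L_1$-norm, whenever moreover $\sum_k|A_k|\le|T|$ the family is countable and $\sum_k\|A_kf\|\le\|\,|T|\,\|\cdot\|f\|<\infty$. Using Kalton's pointwise description — an atom is precisely a weighted composition operator, and after discarding a purely continuous remainder an operator has, on each fibre, a well-defined atomic part consisting of at most countably many point masses — I would apply Zorn's lemma to obtain a \emph{maximal} family $\mathcal F$ of pairwise disjoint nonzero ``atom pieces'' of $T$, enumerate it as $\{T_j\}_{j\ge 1}$, and set $T_a=\sum_jT_j$, $T_c=T-T_a$. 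Then $|T_a|=\sum_j|T_j|$, which yields condition (1) in the sharp form $\sum_j\|T_jf\|\le\|\,|T_a|\,\|\cdot\|f\|=\|T_a\|\cdot\|f\|$, and condition (2) is just the characterization of atoms; maximality of $\mathcal F$ forces $T_c$ to be purely continuous, since an atom below $|T_c|$ would automatically be disjoint from every $T_j$ and could be adjoined to $\mathcal F$, a contradiction.

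It remains to normalize so that (4) holds, condition (3) having been built into the extraction. This is a measurable rearrangement: for a.e. $x$ the countable multiset $\{|a_i(x)|\}_i$ can be reordered to be nonincreasing, and one defines the $j$-th atom of the new family by the selection ``$|a_j(x)|$ is the $j$-th largest value among the $|a_i(x)|$, and $\sigma_j(x)=\sigma_k(x)$ for the least index $k$ attaining it''; a standard measurable-selection argument shows the new $a_j$ and $\sigma_j$ are measurable, that conditions (1)--(3) survive the rearrangement, and that (4) now holds. The main obstacle is the first stage: the regularity of operators on $L_1$ and Kalton's fibrewise representation are the genuinely hard ingredients, and without them even the phrase ``the atom pieces of $T$'' is not workable; the measurable bookkeeping of the last stage is routine but must be carried out with care. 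For the complete argument we refer the reader to \cite{KaltonEnd} and \cite{Rosenthal_L1}.
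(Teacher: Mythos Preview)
The paper does not prove this statement at all: it is quoted verbatim as Kalton's representation theorem (in the formulation of \cite{Rosenthal_L1}) and used as a black box in Section~4, with no argument given. So there is no ``paper's own proof'' to compare your proposal against; your sketch, which in the end also defers to \cite{KaltonEnd} and \cite{Rosenthal_L1} for the complete argument, is already more than the paper offers.

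That said, a couple of points in your outline would need tightening before it could stand on its own. First, in the existence step you assert that an atom $A\le|T_c|$ would ``automatically be disjoint from every $T_j$'' and hence contradict maximality of $\mathcal F$; but disjointness here is the condition $\sigma_A(x)\neq\sigma_j(x)$ a.e., and it is not immediate why an atom dominated by $|T-T_a|$ must satisfy this relative to the already-extracted $T_j$ --- one has to argue via the lattice structure that the part of $A$ sharing a $\sigma$ with some $T_j$ is absorbed into $|T_a|$, which is exactly the delicate part of Kalton's analysis. Second, for uniqueness you invoke that the purely continuous operators ``form a band''; this is true, but it is a consequence of the structure theorem rather than an input to it, so using it to prove uniqueness is circular unless you supply an independent argument (e.g., show directly that if $S$ is purely continuous and $A$ is an atom with $0\le A\le|S+R|$ for some purely atomic $R$, then already $A\le|R|$). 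These are precisely the places where the genuine work in \cite{KaltonEnd} lies, as you yourself note.
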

Note that if $E$ is a set of positive measure such that $a_j(x)\neq 0$ a.e on $E$, then $\mu(\sigma_j(E))>0$. Indeed, let $F\subset E$ be such that $|a_j(x)|>\alpha>0$ for every $x\in F$.
Now $T_j1_{\sigma_j(F)} = 1 _Fa_j$ implies that $\|T_j1_{\sigma_j(F)}\|>0$ hence $\mu(\sigma_j(F))>0$.
The power of Kalton's representation theorem is that it reduces many problems about operators on $L_1$ to measure theoretic considerations. This is illustrated in the proof of the following proposition.

\begin{proposition}\label{atomprop}
Let $T\in\opLOne$ be a non-zero atom such that $T\neq\lambda I$ for any $\LCN$. Then there exists a subspace $Y\subset L_1$ such that $Y\equiv L_1$, $d(Y,TY)>0$, and $Y+TY$ is complemented in $L_1$.
\end{proposition}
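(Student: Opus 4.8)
The plan is to use Kalton's representation theorem to write $Tf(x)=a(x)f(\sigma x)$ for measurable $a\colon(0,1)\to\RN$, $\sigma\colon(0,1)\to(0,1)$, and then reduce everything to a small number of measure‑theoretic configurations. Three elementary facts drive the argument. \emph{(i)} For any measurable $E$ with $\mu(E)>0$, $L_1(E)$ (with normalized measure) is isometric to $L_1$ and is norm‑one complemented via restriction. \emph{(ii)} Put $\nu(B)=\int_{\sigma^{-1}(B)}|a|\,d\mu=\|T\mathbf 1_B\|_1$; then $\nu\le\|T\|\mu$, so $\nu=w\,d\mu$ with $0\le w\le\|T\|$, and the change of variables gives $\|Tf\|_1=\int|f|\,w\,d\mu$ for \emph{every} $f$. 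Hence $T$ annihilates $L_1(\{w=0\})$, $T|_{L_1(E)}$ is an isomorphism iff $w\ge\delta$ on $E$ for some $\delta>0$, and $\mu(\{w>0\})>0$ because $\int w\,d\mu=\|a\|_1>0$ (as $T\neq0$). \emph{(iii)} For $f\in L_1(E)$ the support of $Tf$ lies in $B_E:=\sigma^{-1}(E)\cap\{a\neq0\}$, and on $B_E$ the map $f\mapsto a\cdot(f\circ\sigma)$ is the composition of the isometry $L_1(E,w\,d\mu)\to L_1(B_E,|a|\,d\mu)$ onto the $\sigma^{-1}(\mathcal B_E)$‑measurable functions (complemented by conditional expectation) with the surjective isometry $h\mapsto ah$ from $L_1(B_E,|a|\,d\mu)$ onto $L_1(B_E,d\mu)$; consequently $T(L_1(E))$ is $1$‑complemented in $L_1$ by an operator $Q$ that factors through $L_1(B_E)$. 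Granting these, I must produce $Y\equiv L_1$ with $T|_Y$ an isomorphism, $d(Y,TY)>0$ and $Y+TY$ complemented.

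\textbf{Disjoint‑support case.} First I would look for $E$ of positive measure with $w\ge\delta$ on $E$ and $B_E\cap E$ null: then $Y:=L_1(E)$ works, since $d(Y,TY)=1$ and $P_E+Q$ is idempotent with range $Y+TY$ (using that $B_E\cap E$ is null). Such an $E$ exists whenever $\mu(\{w>0\}\cap\{a=0\})>0$ (take any such $E$ with $w\ge\delta$: then $B_E\subseteq\{a\neq0\}$ is automatically disjoint from $E$), and also whenever $\mu(\{w>0\}\cap\{\sigma\neq\mathrm{id}\})>0$: by Lusin's theorem and inner regularity pick a compact $K\subseteq\{w>0\}\cap\{\sigma\neq\mathrm{id}\}$ of positive measure on which $\sigma$ is continuous, so $\eta:=\min_{x\in K}|\sigma(x)-x|>0$, and take $E=K\cap J\cap\{w\ge\delta\}$ for a short interval $J$ (length $<\eta$) of positive measure in $K\cap\{w\ge\delta\}$; then $\sigma(E)\cap E=\emptyset$, hence $B_E\cap E=\emptyset$.

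\textbf{Multiplication‑like case.} The remaining possibility is $\sigma=\mathrm{id}$ and $a\neq0$ $\mu$‑a.e.\ on $\{w>0\}$. If $a$ is not a.e.\ constant on $\{w>0\}$, I would choose $\lambda_1\neq\lambda_2$ nonzero and disjoint positive‑measure $E_1,E_2\subseteq\{w>0\}$ of equal measure with $|a-\lambda_i|<\e$ on $E_i$, a measure‑preserving bijection $\phi\colon E_1\to E_2$, and let $Y:=\{f\in L_1(E_1\cup E_2):f\circ\phi=f\text{ on }E_1\}\equiv L_1$ be the ``folded'' copy of $L_1$ (after intersecting $E_1\cup E_2$ with $\{w\ge\delta\}$ to keep $T|_Y$ an isomorphism). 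On $Y$, $Tf$ equals $af$ on $E_1\cup E_2$ plus a remainder $r_f$ supported off $E_1\cup E_2$; since $(u,v)\mapsto|u-\lambda_1v|+|u-\lambda_2v|$ is a norm equivalent to $|u|+|v|$ and $Y$ is $\phi$‑symmetric, one gets $d(Y,TY)\ge c(\lambda_1,\lambda_2)/2>0$ for $\e$ small. For complementation, $g+af=q$ with $g,f\in Y$ forces $f|_{E_1}=(q\circ\phi-q)/(a\circ\phi-a)$ (legitimate as $|a\circ\phi-a|\ge|\lambda_1-\lambda_2|-2\e>0$ on $E_1$), so $f$, hence $r_f$, is a bounded function of $q$; thus $Y+TY$ is exactly the graph of a bounded operator from $L_1(E_1\cup E_2)$ into its complement, hence complemented. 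If instead $a\equiv c\neq0$ a.e.\ on $\{w>0\}$, then $T=cP+R$ with $P$ the band projection onto $L_1(\{w>0\})$, $R\colon L_1(\{w>0\})\to L_1(\{w=0\})$, $R(I-P)=0=PR$; since $T\neq\lambda I$ we must have $\mu(\{w=0\})>0$. If $R$ preserves a copy of $L_1$ I would take such a copy (replacing it by its $P$‑image and applying \cite[Theorem~1.1]{Rosenthal_L1}) inside $L_1(\{w>0\})$ and complemented; on it $T=cI+R$ with $cY$ and $RY$ disjointly supported, and the ``graph over $cY$'' argument again gives $d(Y,TY)>0$ and $Y+TY$ complemented. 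If $R$ is $L_1$‑strictly singular, I would start from a straddling fold $X$ over $G_1\subseteq\{w>0\}$, $G_2\subseteq\{w=0\}$ (so $PX\subseteq L_1(G_1)$ and $d(X,PX)\ge\tfrac12$), use Lemma~\ref{smallnormLemma} (valid for $p=1$, and producing an \emph{isometric} copy of $L_1$) to pass to $Y'\subseteq X$ with $\|R|_{Y'}\|$ tiny, and finish by the perturbation estimates in the spirit of Propositions~\ref{distanceprop} and \ref{smallperturbationProp}. Throughout, the copies of $L_1$ produced are of the form $L_1(E)$, a fold over such a set, or an isometrically‑Haar span, so $Y\equiv L_1$ is preserved; the complemented sub‑copies supplied by \cite{Rosenthal_L1} can be taken of the same form.

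The routine part is the disjoint‑support reduction together with facts (i)--(iii), which are straightforward measure theory. The main obstacle is the multiplication‑like case: there the copy of $L_1$ has to be built by hand (the ``fold''), one must check it is moved to positive distance from itself, and one must recognize $Y+TY$ as the graph of a bounded operator; the sub‑case $a\equiv c$ needs in addition the observation that $T\neq\lambda I$ forces the band $\{w>0\}$ to be proper, after which a strictly‑singular perturbation argument closes it.
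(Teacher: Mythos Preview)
Your argument is correct and follows the same broad strategy as the paper's---a measure-theoretic case analysis on $(a,\sigma)$, with a disjoint-support reduction in one branch and a ``fold'' $\{f+Sf\}$ in the multiplication-like branch---but the cases are carved differently, and the difference matters. The paper splits first on whether $\sigma=\mathrm{id}$ a.e.\ and then, when $\sigma\neq\mathrm{id}$, on whether $B=\{\sigma\neq\mathrm{id}\}\cap\{a\neq0\}$ has positive measure. The point of this split is that whenever the fold is invoked, $T$ is a \emph{global} multiplication $Tf=af$ (either because $\sigma=\mathrm{id}$ everywhere, or because $\mu(B)=0$ forces $a=0$ off $\{\sigma=\mathrm{id}\}$); there is then no remainder term, and the ``$a$ constant'' situation is dispatched in one line by taking $\lambda_1=c$, $\lambda_2=0$ with $\Delta_2\subset\{\sigma\neq\mathrm{id}\}$ and reusing the same fold computation together with Proposition~\ref{smallperturbationProp}. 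Your split, organized around $\{w>0\}$, does not force $T$ to be a global multiplication in the fold branch: $T$ agrees with multiplication only on $\{w>0\}$, which is what generates your residual operator $R$ and the two further sub-sub-cases ($R$ preserves $L_1$ versus $R$ is $L_1$-strictly singular). Both can be closed---the first is exactly the configuration of Lemma~\ref{reductionlemma} with $(I-P)TP=R$, and in the second the fold $X$ is complemented, the Haar-type $Y'\subset X$ from Lemma~\ref{smallnormLemma} is conditional-expectation complemented in $X$, and $Y'+cPY'=PY'\oplus S(PY')$ is then visibly complemented, so Proposition~\ref{smallperturbationProp} applies---but this is substantially more work than the paper's route. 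What your organization buys in return is worth noting: by always working inside $\{w\ge\delta\}$ you secure that $T_{|Y}$ is an isomorphism, a conclusion the paper needs downstream but neither states in the proposition nor verifies in its $\mu(B)>0$ sub-case (there $\beta\subset\{a\neq0\}$ alone does not bound $T_{|L_1(\beta)}$ below; one should also intersect with a level set of $w$, as you do).
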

\begin{proof}
By the definition of   atom we have that $(Tf)(x) = a(x)f(\sigma x)$ for some $a$ and $\sigma$. We consider two possibilities depending on $\sigma$.

\textbf{1.} If $\sigma = id$ a.e on $(0,1)$ then $a(x)\not\nequiv \textrm{const}$ a.e  (otherwise $T = \lambda I$ for some $\lambda$). Then we find two different numbers $\lambda_1,\lambda_2$ and a positive number $\delta$ such that
\begin{itemize}
\item $|\lambda_1 - \lambda_2|>3\delta$
\item There are closed sets $\Delta_i = \{x : |a(x) - \lambda_i|<\delta\}$ so that $\mu(\Delta_i)>0$ for $i=1,2$
\end{itemize}
To see this we can consider a good enough approximation of $a(x)$ with a step function and without loss of generality we may assume that $\lambda_1\neq -1$. Note also that we can choose $\delta$ as small as we want (independent of $\lambda_1$ and $\lambda_2$) which choice we leave for later.
Clearly, $\Delta_1\cap\Delta_2 = \emptyset$ and, since they are closed, by shrinking $\delta$ we can assume that they are at a distance of at least $\delta$ apart. From our choice of
$\Delta_i$, $i=1,2$, we also have
$$
\|(Tf - \lambda_i f)\indf\| < \delta\|f\indf\|\,\,, i=1,2.
$$
Let $S : L_1(\Delta_1)\to L_1(\Delta_2)$ be an isometry and define
$$
Z = \{f_1 + Sf_1 : f_1\in L_1(\Delta_1)\}.
$$
Since $\|f_1+Sf_1\| = 2\|f_1\|$ we immediately have that $Z\equiv L_1$. To show that $d(Z,TZ)>0$ assume that
$\|T(g+Sg)\| = 1$ for some $g\in L_1(\Delta_1)$.
Then for arbitrary $f\in L_1(\Delta_1)$ we have
\begin{equation*}
\begin{split}
\|f+Sf - Tg - TSg\| &= \|f+Sf - ag - aSg\| = \|f-ag\| + \|Sf - aSg\|  \\
& = \|f-\lambda_1g + (\lambda_1-a)g\| + \|Sf - \lambda_2Sg + (\lambda_2-a)Sg\|\\
&\geq  \|f-\lambda_1g\| - \|(\lambda_1-a)g\| + \|Sf - \lambda_2Sg\| - \|(\lambda_2-a)Sg\|\\
&\geq \|f-\lambda_1g\| + \|f - \lambda_2g\| - \delta \|g\|  - \delta \|Sg\|\geq
|\lambda_1-\lambda_2|\|g\| - 2\delta\|g\|\geq \frac{|\lambda_1-\lambda_2|}{3} \|g\|.
\end{split}
\end{equation*}
Now we observe that $\|T(g+Sg)\| = 1$ implies $\|g\|\geq \frac{1}{2\|T\|}$ hence
$$
d(TZ,Z) = \inf_{\begin{array}{c} \|T(g+Sg)\| = 1   \\f,g\in L_1(\Delta_1)  \end{array} }
\|f+Sf - Tg - TSg\|\geq \frac{|\lambda_1-\lambda_2|}{6\|T\|}.
$$
Define $T_1f(x) = \lambda_1f(x)\mathbf{1}_{\Delta_1}(x) + \lambda_2f(x)\mathbf{1}_{\Delta_2}(x)$ and let $K = T-T_1$.
Denote by $P_1$ the natural, norm one, projection  from $L_1$ onto $L_1(\Delta_1)$ and let  $P = P_1+\frac{\lambda_2+1}{\lambda_1+1}SP_1$. It is easy to see that $P$ is an idempotent operator since $P_1SP_1\equiv 0$. To see that $P$ is a projection onto $Z+T_1Z$ note that
$$
Pf = P_1f+\frac{\lambda_2+1}{\lambda_1+1}SP_1f = \frac{1}{\lambda_1+1}((\lambda_1+1)P_1f +  (\lambda_2+1)SP_1f ) =
\frac{1}{\lambda_1+1}(P_1f+SP_1f + \lambda_1P_1f + \lambda_2SP_1f)\in Z+T_1Z
$$
Now we observe that $Z+TZ = Z+(T_1+ K)Z$ and use the fact that $\|K_{|Z}\|<\delta$ to conclude that for sufficiently small $\delta$, Proposition  \ref{smallperturbationProp} guarantees that the subspace $Z+TZ$ is complemented.

\textbf{2.} If $\sigma \neq id$ a.e on $(0,1)$ let $A = \{x\,\, |\,\, \sigma(x) = x\}$ and denote $A' = (0,1)\backslash A$ and $B = A'\cap \{x\,\,|\,\,a(x) \neq 0\}$. We have two cases depending on $\mu(B)$.

Case 1. $\mu(B)>0$\\
In this case we show that there exists $\Delta\subset B$ such that
$\mu(\Delta\cap\sigma(\Delta)) = 0$.
Denote $\alpha_k = \{x : |x-\sigma(x)|>\frac{1}{k}\}\cap B$. Obviously
$\cup_{k=1}^{\infty} \alpha_k = \{x : |x-\sigma(x)|>0\}\cap B = B$ and the latter set has positive measure by assumption,
hence there exists $k_0$ for which $\mu(\alpha_{k_0})>0$. Now
$$
\alpha_{k_0}  = \bigcup_{n=0}^{2k_0-1} \left (\alpha_{k_0}\cap\left[\frac{n}{2k_0},\frac{n+1}{2k_0}\right] \right ),
$$
so there exists $n_0$ such that if we denote
$\beta = \alpha_{k_0}\cap\left[\frac{n_0}{2k_0},\frac{n_0+1}{2k_0}\right]$ then  $\mu(\beta)>0$.
From the way we defined $\beta$ it is evident that $\beta\cap\sigma(\beta) = \emptyset$ because
$\textrm{diam}(\beta)<\frac{1}{2k_0}$ and $|x-\sigma(x)|>\frac{1}{k_0}$ for every $x\in\beta$.
It is also clear that $d(L_1(\beta),TL_1(\beta)) = 1$ since $L_1(\beta)$ and $TL_1(\beta)$ have disjoint supports. The fact that
$L_1(\beta)+TL_1(\beta)$ is complemented in $L_1$ follows from the facts that  $L_1(\beta)$ is norm-one complemented, $T$ is an isomorphism (hence $TL_1(\beta)$ is complemented), and $L_1(\beta)$ and $TL_1(\beta)$ have disjoint supports.

Case 2. $\mu(B) = 0$.\\
In this case we have $\mu(A)>0$ (otherwise $T$ will be a zero atom). There are two sub-cases:
\begin{itemize}
\item  If $a(x)\neq \mathrm{const}$ a.e on $A$.\\ Then we proceed as in the case $\sigma = id$ a.e on $(0,1)$ but we consider $A$ instead of $(0,1)$.
\item If $a(x) = \mathrm{const} = \lambda$ a.e on $A$.\\
Then again we proceed as in the case $\sigma = id$ a.e on $(0,1)$ considering $\lambda_1 = \lambda$ and $\lambda_2 = 0$. We can do this since $\mu(A')>0$.
\end{itemize}
\end{proof}
\begin{remark}
Note that Proposition \ref{atomprop} is also valid when considering operators
$T: L_1(\nu_1)\to L_1(\nu_2)$ where $\nu_1$ and $\nu_2$ are two non-atomic measures on some sub $\sigma$-algebra on $(0,1)$ and this is in fact how we are going to use it.
\end{remark}

Now we proceed to the proof of Theorem \ref{movingthm} in the case $p=1$.


\begin{proof}[Proof of Theorem \ref{movingthm} in the case $L_1$]
First, using Lemma \ref{auxlemma}, we find $\varepsilon>0$ and a subspace $X\subseteq L_1$ such that for every $X'\subseteq X$, $X'\simeq L_1$, and every $\LCN$ we have  $g(\lambda I - T,X')>\varepsilon$. Then consider a similarity $S$ on $L_1$ such that $SX = L_1(\Delta)$ where $\Delta$ can be any nonempty open interval. For $T' = STS^{-1}$ we have $g(\lambda I - T',X')>\varepsilon '$
for every $X'\subseteq SX = L_1(\Delta)$ where $\varepsilon ' =\frac{\varepsilon}{\|S\|\|S^{-1}\|} $. It is clear that it is enough to prove the theorem for $T'$ so without loss of generality we may assume that $T' = T$ and $\varepsilon = \varepsilon '$. \\
Let $T = T_a+T_c$ be the Kalton representation for $T$ and fix $\delta$.
First, we use Rosenthal's remark before Lemma 2.1 in \cite{Rosenthal_L1} to find an atom $V$ and set $\Delta_1\subseteq\Delta$ such that
$\displaystyle \|{T_a}_{|L_1(\Delta_1)}-V\|<\frac{\varepsilon}{10}$.
Since completely continuous operators are not sign embeddings, we  apply \cite[Lemma 3.1]{Rosenthal_L1} to find a norm one complemented subspace $X'\subseteq L_1(\Delta_1)$ such that $X'\equiv L_1$ and $\displaystyle \|{T_c}_{|X'}\|<\frac{\varepsilon}{10}$. From our choice of $X'$ it follows that
$$
g(\lambda I - V,X'')>\frac{\varepsilon}{2} \,\, \textrm{for every}\,\, \LCN \,\,\textrm{and every}\,\, X''\subseteq X', X''\simeq L_1 .
$$
From the last inequality it is clear that $V: X'\to L_1$ is a non-zero atom and $V_{|X'}\neq \lambda I$.  Now Proposition \ref{atomprop} gives us the desired result.
\end{proof}

\newpage
\nocite{*}

\section{Appendix}
Before we start with the proof of Theorem \ref{thm:stabilization} we recall some of the notation we previously used and note some of the properties of $S(x)$, the square function defined with respect to the Haar basis (for a general definition of $S(x)$ see Definition \ref{def:squarefunction}).\\
Recall that unless otherwise noted, $\LP$ denotes $L_p([0,1],\mu)$ where $\mu$ is the Lebesgue measure.
The unconditional basis constant of the usual Haar basis  $\HaarApendix$ in $L_p$, $1<p<\infty$, is denoted by $C_p$. Recall also that $\{r_n\}_{n=0}^{\infty}$
is the Rademacher sequence on $[0,1]$ (defined by  $r_n = \sum_{i=1}^{2^{n}}h_{n,i}$).

Denote by $\AlgE_n$ the finite algebra generated by the dyadic intervals $[(i-1)2^{-n},i2^{-n}]$, $i=1,2,\ldots 2^n$, and by $\AlgE$, the union of all these algebras. It is clear that the algebra $\AlgE_n$ is generated by the supports of $\{(h_{n,i})\}_{i=1}^{2^n}$.

If $f$ and $g$ are functions in $\LP$ which have disjoint supports with respect
to the Haar basis, then it is obvious that $S^2(f+g) = S^2(f)+S^2(g)$. This will be used numerous times. Let $\{x_k\}_{k=1}^{\infty}$ be a sequence of functions in $\LP$, $\strictpbounds$, which are disjointly supported  with respect to the Haar basis. Using the unconditionality of the Haar basis and Khintchine's inequality we obtain
\begin{equation}\label{eq:sqfuncineq1}
\left \|\sumk x_k \right\|_p \geq C_p^{-1}\left(\int_0^1\left\|\sumk r_k(u)x_k\right\|_p^p\,du\right)^{1/p}=
C_p^{-1}\left\|\left(\int_0^1 \left|\sumk r_k(u)x_k\right|^p\,du\right)^{1/p}\right\|_p
\geq
C_p^{-1}A_p\left\|\left(\sumk |x_k|^2\right )^{\frac{1}{2}}\right\|_p ,
\end{equation}
where $A_p$ is the constant from Khintchine's inequality.
If $x_k = \sum_{i}\alpha_{k,i} h_{n_{k,i}}$, $k=1,2,\ldots $ are disjointly supported vectors with respect to the Haar basis, using (\ref{eq:sqfuncineq1}) for
$\{\alpha_{k,i}h_{n_{k,i}}\}_{k,i}$   we obtain
\begin{equation}\label{eq:sqfuncineq}
\left \|\sumk x_k \right\|_p \geq C_p^{-1}A_p\left(\int_0^1 \left(\sumk S^2(x_k)\right )^{\frac{p}{2}}\right)^\frac{1}{p}.
\end{equation}
In a similar manner
\begin{equation}\label{eq:sqfuncineq2}
\left \|\sumk x_k \right\|_p \leq C_pB_p\left(\int_0^1 \left(\sumk S^2(x_k)\right )^{\frac{p}{2}}\right)^\frac{1}{p}.
\end{equation}
From the last two inequalities it follows that the norm $|\|\cdot\||_p=\|S(\cdot)\|_p$ is equivalent to the usual norm in $\LP$, $\strictpbounds$.
Now we proceed to the main theorem of this section.

\bf{Theorem \ref{thm:stabilization}.}\, \it{For each $1<p<2$ there is a
constant $K_p$ such that if $T$ is a sign embedding operator from $L_p[0,1]$ into $L_p[0,1]$ (and in particular if it is an isomorphism), then there is a $K_p$ complemented subspace $X$ of $L_p[0,1]$ which is $K_p$-isomorphic to $L_p[0,1]$ and such that some multiple of $T_{|X}$ is a $K_p$-isomorphism and $T(X)$ is $K_p$ complemented in $L_p$. \hfill\break
Moreover, if we consider $L_p[0,1]$ with the norm $|\|x\||_p=\|S(x)\|_p$ (with $S$ being the square function with respect to the Haar system) then, for each $\varepsilon>0$, there is a subspace $X$ of $L_p[0,1]$ which is $(1+\varepsilon)$-isomorphic to $L_p[0,1]$ and such that some multiple of $T_{|X}$ is a $(1+\varepsilon)$-isomorphism (and $X$ and $T(X)$ are $K_p$ complemented in $L_p$).}

\begin{proof}[Proof of Theorem \ref{thm:stabilization}]:
Let $T$ be as in the statement of the theorem. Without loss of generality (see e.g. Lemma 9.10 in \cite{JMST}
and note that only the boundedness of $T$ is used) $\{Th_{n,i}\}$ is a block basis of $\HaarApendix$. For $E\in \mathcal{E}_n$ put
\[
v_n(E)=S\left(\sum_{h_{n,i}\subseteq E}Th_{n,i}\right),
\]
where $h_{n,i}\subseteq E$ is a shorthand notation  for $\rm{supp}(h_{n,i})\subseteq E$. Put  also $v_n=v_n([0,1]) = S(\sum_{i=1}^{2^n}Th_{n,i}) = S(Tr_n).$

\begin{claim}\label{cl:equi-integrability}
The convex hull of $\{v_n^2\}$ is $p/2$-equi-integrable; i.e., the
set \[V=\left\{\left(\sum\alpha_n^2v_n^2\right)^{p/2}\ : \sum\alpha_n^2\le 1\right\}\]
is equi-integrable.
\end{claim}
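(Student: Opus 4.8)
The plan is to realise the family $V$ as (essentially) the image of the unit ball of $\ell_2$ under a bounded operator into an $\ell_2$-valued $L_p$ space, and then to apply Maurey's factorization theorem to that operator; this produces a single fixed weight $g\in L_\rho[0,1]$ that controls every member of $V$ simultaneously. The only input needed from the hypotheses is that $T$ is bounded. First I would record the identity that drives everything: since $\{Th_{n,i}\}$ is a block basis of the Haar system, the functions $Tr_n=\sum_i Th_{n,i}$, $n\ge 0$, are disjointly supported with respect to the Haar basis, so for all scalars $(\alpha_n)$,
$$
S^2\Big(\sum_n \alpha_n Tr_n\Big)=\sum_n\alpha_n^2\,S^2(Tr_n)=\sum_n\alpha_n^2 v_n^2.
$$
Define $B\colon \ell_2\to L_p([0,1];\ell_2)$ by $B\alpha=(\alpha_n v_n)_n$, so that $|B\alpha(x)|_{\ell_2}=\big(\sum_n\alpha_n^2v_n(x)^2\big)^{1/2}=S\big(T\sum_n\alpha_n r_n\big)(x)$ and every element of $V$ has the form $|B\alpha|_{\ell_2}^{\,p}$ with $\|\alpha\|_{\ell_2}\le 1$. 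Using the square-function inequality (\ref{eq:sqfuncineq}), boundedness of $T$, and Khintchine's inequality (recall $S(r_n)\equiv 1$, so $\|\sum_n\alpha_n r_n\|_p\le\|\alpha\|_{\ell_2}$), one gets $\|B\alpha\|_{L_p(\ell_2)}=\|S(T\sum_n\alpha_n r_n)\|_p\le C\|T\|\,\|\alpha\|_{\ell_2}$, so $B$ is bounded.

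Next I would apply Maurey's factorization theorem for operators taking values in $L_p$, $1\le p<2$, to the operator $B$ (one may first pass to an isomorphic copy of $L_p([0,1];\ell_2)$ as a complemented subspace of $L_p[0,1]$ if the scalar-valued statement is preferred). This yields a constant $C$ and a nonnegative $g\in L_\rho[0,1]$, where $\tfrac1\rho=\tfrac1p-\tfrac12$ (so $\rho=\tfrac{2p}{2-p}<\infty$), with
$$
\int_0^1 \frac{\sum_n\alpha_n^2 v_n(x)^2}{g(x)^2}\,dx\le C^2\|\alpha\|_{\ell_2}^2\qquad\text{for every }\alpha\in\ell_2.
$$
Then for any measurable $E\subseteq[0,1]$ and any $\alpha$ with $\|\alpha\|_{\ell_2}\le1$, Hölder's inequality with exponents $\tfrac2p$ and $\tfrac2{2-p}$ gives
$$
\int_E\Big(\sum_n\alpha_n^2 v_n^2\Big)^{p/2}dx=\int_E\Big(\frac{\sum_n\alpha_n^2 v_n^2}{g^2}\Big)^{p/2}g^p\,dx\le\Big(\int_E\frac{\sum_n\alpha_n^2 v_n^2}{g^2}\Big)^{p/2}\Big(\int_E g^\rho\Big)^{\frac{2-p}2}\le C^{p}\Big(\int_E g^\rho\Big)^{\frac{2-p}2}.
$$
Since $g\in L_\rho$, the function $g^\rho$ lies in $L_1[0,1]$, so $\int_E g^\rho\to 0$ as $\mu(E)\to 0$; the estimate is uniform over $V$, and taking $E=[0,1]$ shows $\sup_{w\in V}\|w\|_1<\infty$. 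Hence $V$ is equi-integrable, which is the assertion of the claim.

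I expect the only real obstacle to be the correct invocation of Maurey's factorization in the vector-valued setting, so that the weight $g$ depends only on $x\in[0,1]$; the rest is bookkeeping with the square-function estimates from the beginning of the Appendix and with Khintchine's inequality. This is precisely where the restriction $p<2$ enters: Maurey factorization of operators into $L_p$ fails for $p>2$, and accordingly the claim, like Theorem \ref{thm:stabilization} itself, is false for $p>2$.
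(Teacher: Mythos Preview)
Your argument is correct and takes a genuinely different route from the paper's. The paper argues by contradiction: if equi-integrability fails, one extracts disjoint sets $A_m\subset[0,1]$ and successive convex combinations $w_m^2=\sum_{n\in\sigma_m}\alpha_n^2v_n^2$ with $\int_{A_m}w_m^p\ge\varepsilon_0^p$, and then the chain $\|\sum a_m\sum_{n\in\sigma_m}\alpha_n r_n\|_2\ge\|\cdot\|_p\ge\|T\|^{-1}\|T(\cdot)\|_p$ together with (\ref{eq:sqfuncineq}) and the disjointness of the $A_m$ forces $(\sum a_m^2)^{1/2}\ge c(\sum|a_m|^p)^{1/p}$ for all scalars $(a_m)$, contradicting $p<2$. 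This is short and entirely elementary, yielding exactly the claim and no more.

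Your approach instead packages the hypothesis as boundedness of $B:\ell_2\to L_p([0,1];\ell_2)$ and invokes Maurey--Nikishin to produce a single weight $g\in L_{2p/(2-p)}[0,1]$ with $\sup_n\|v_n/g\|_2<\infty$; equi-integrability then drops out of H\"older. This is heavier machinery but gives more: the weight $g$ is essentially what the paper only obtains later (on large sets $D_n$, via Nikishin's theorem) in Proposition \ref{pr:Lambda}, so your proof anticipates that step. The one point you correctly flag is the vector-valued invocation: the suggested reduction to scalar Maurey by embedding $L_p([0,1];\ell_2)$ into $L_p([0,1]\times\Omega)$ a priori returns a weight on the \emph{product} space, not on $[0,1]$. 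The clean fix is to bypass this and apply the sublinear Nikishin--Maurey theorem directly to the positively homogeneous, subadditive map $N\alpha=(\sum_n\alpha_n^2v_n^2)^{1/2}$ from the type-$2$ space $\ell_2$ into $L_p[0,1]$; that version yields a scalar weight on $[0,1]$ and is precisely where $p<2$ enters, matching your closing remark.
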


\begin{proof} The proof is a refinement of the argument on page 265 of
\cite{JMST}. Since the convex hull of any finite set in
$L_{p/2}$ is $p/2$-equi-integrable, it follows that if the convex
hull of $\{v_n^2\}$ is not $p/2$-equi-integrable then there are
$\varepsilon_0>0$, successive subsets $\sigma_m\subset \N$, and disjoint subsets $\{A_m\}_{m=1}^\infty$ of $[0,1]$ such that for
$w_m^2=\sum_{n\in\sigma_m}\alpha_n^2v_n^2$, where
$\sum_{n\in\sigma_m}\alpha_n^2=1$ for all $m$, we have
\begin{equation}\label{eq:tmp1}
\left(\int_{A_m}w_m^p\right)^{1/p}\ge\varepsilon_0.
\end{equation}
Using (\ref{eq:sqfuncineq}) and (\ref{eq:tmp1}) (the estimate in (\ref{eq:sqfuncineq}) we can use since $\{Tr_n\}$ are disjointly supported with respect to the Haar basis), for all $\{a_m\}_{m=1}^\infty\in\ell_2$ we have
\[
\begin{array}{rl}
(\sum_{m=1}^\infty a_m^2)^{1/2}& =\|\sum_{m=1}^\infty
a_m\sum_{n\in \sigma_m}\alpha_nr_n\|_2
\ge\|\sum_{m=1}^\infty a_m\sum_{n\in \sigma_m}\alpha_nr_n\|_p \geq
\|T\|^{-1}\|\sum_{m=1}^\infty a_m\sum_{n\in \sigma_m}\alpha_nTr_n\|_p \\
&\ge \|T\|^{-1}C_p^{-1}A_p(\int_0^1(\sum_{m=1}^\infty a_m^2\sum_{n\in
\sigma_m}\alpha_n^2S^2(Tr_n))^{p/2})^{1/p} \\
&= \|T\|^{-1}C_p^{-1}A_p(\int_0^1(\sum_{m=1}^\infty a_m^2\sum_{n\in
\sigma_m}\alpha_n^2v_n^2)^{p/2})^{1/p} =
\|T\|^{-1}C_p^{-1}A_p(\int_0^1(\sum_{m=1}^\infty a_m^2w_m^2)^{p/2})^{1/p}\\
&\ge \|T\|^{-1}C_p^{-1}A_p(\sum_{m=1}^\infty
|a_m|^p\int_{A_m}w_m^p)^{1/p} \ge
\|T\|^{-1}C_p^{-1}A_p\varepsilon_0(\sum_{m=1}^\infty |a_m|^p)^{1/p}
\end{array}
\]
which leads us to contradiction since $p<2$.
\end{proof}

\begin{proposition}\label{pr:Lambda}
There is an additive $L_{p/2}^+$ valued measure, $\Lambda$, on $\mathcal{E}$ and there are successive convex combinations $u_m(\cdot)$ of $\{v_n^2(\cdot)\}$ such that for all $E\in\mathcal{E}$ we have $u_m(E)\to \Lambda(E)$ almost surely and in $L_{p/2}$. Moreover, for any sequence $\varepsilon_n\to 0$, there are measurable sets $D_n\subset [0,1]$ with $\mu(D_n)>1-\varepsilon_n$ and such that
 for all $E\in\mathcal{E}$ and all $n$, $u_m(E){\bf 1}_{D_n}\to \Lambda(E){\bf 1}_{D_n}$ as $m\to\infty$ also in $L_1$.
\end{proposition}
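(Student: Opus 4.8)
The plan is to produce $\Lambda$ and the convex combinations $u_m$ by a diagonal extraction over the countable algebra $\mathcal{E}$, the necessary compactness being supplied by Claim~\ref{cl:equi-integrability}. Two preliminary remarks. First, since $\{Th_{n,i}\}$ is a block basis of the Haar system, its members are disjointly supported with respect to the Haar basis, so $S^2$ is additive over them; hence for $E\in\mathcal{E}_n$ one has $0\le v_n^2(E)\le v_n^2$, the set function $E\mapsto v_n^2(E)$ is finitely additive on $\mathcal{E}_n$, and consequently every convex combination $u=\sum_n\beta_n v_n^2$ ($\beta_n\ge0,\ \sum_n\beta_n=1$) is finitely additive on the algebra on which it is defined. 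Second, applying Claim~\ref{cl:equi-integrability} with $\alpha_n=\beta_n^{1/2}$ shows that $(\bar u)^{p/2}$, where $\bar u:=u([0,1])$, lies in the set $V$, which is equi-integrable and hence (on a probability space) $L_1$-bounded, say by a constant $C$. By the Dunford--Pettis theorem the family $\{(\bar u)^{p/2}:u\text{ a convex combination of the }v_n^2\}$ is relatively weakly compact in $L_1$, and so is $\{(u(E))^{p/2}:u\text{ a convex combination}\}$ for each fixed $E\in\mathcal{E}$, since $0\le u(E)\le\bar u$.

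Next I would carry out the extraction. Enumerate the countable collection of pairs $(E,N)$, $E\in\mathcal{E}$, $N\in\mathbb{N}$. For each such pair the sequence $(v_n^2(E)\wedge N)_n$ is bounded in $L_\infty$, hence relatively weakly compact in $L_1$; a standard diagonalization together with Mazur's theorem (applied, at each stage, to finitely many of these sequences at once so as to keep the combinations successive) produces a single subsequence of $(v_n^2)$ and successive convex combinations $u_m=\sum_n\beta_{m,n}v_n^2$ such that, for every $E\in\mathcal{E}$ and every $N$,
\[
u_m^{(N)}(E):=\sum_n\beta_{m,n}\bigl(v_n^2(E)\wedge N\bigr)\ \longrightarrow\ \Lambda_N(E)\qquad\text{a.e.\ as }m\to\infty,
\]
the limit being the weak-$L_1$ limit of an appropriate subsequence of $(v_n^2(E)\wedge N)_n$; thus $\Lambda_N(E)\in L_\infty^+$ and $\Lambda_N(E)$ increases in $N$. (As the $u_m$ are successive, $u_m(E)$ is defined for all large $m$.) Put $\Lambda(E):=\sup_N\Lambda_N(E)$. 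Since $u_m^{(N)}(E)\le u_m(E)\le\bar u_m$, Fatou's lemma gives $\int(\Lambda_N(E))^{p/2}\le\liminf_m\int(\bar u_m)^{p/2}\le C$ uniformly in $N$, so by monotone convergence $\Lambda(E)\in L_{p/2}^+$; in particular $\Lambda(E)<\infty$ a.e. Finite additivity of $\Lambda$ on $\mathcal{E}$ is inherited from that of the $u_m$ upon passing to the a.e.\ limit.

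It remains to improve $u_m^{(N)}(E)\to\Lambda_N(E)$ to $u_m(E)\to\Lambda(E)$, first almost surely and then in $L_{p/2}$. From $u_m(E)\ge u_m^{(N)}(E)$ one gets $\liminf_m u_m(E)\ge\Lambda(E)$ immediately. For the matching upper bound one must control the tails $R_m^N(E):=\sum_n\beta_{m,n}(v_n^2(E)-N)^+\le\sum_n\beta_{m,n}(v_n^2-N)^+=:R_m^N$, and this is the step I expect to be the main obstacle: although $(R_m^N)^{p/2}$ is dominated by $(\bar u_m)^{p/2}\in V$, hence equi-integrable, the sequence $\{v_n^2\}$ itself need not be equi-integrable in $L_1$ (this is precisely why one is forced into $L_{p/2}$), so Vitali's theorem is not available directly for the tails. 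The way around this is a further refinement of the subsequence, of Kadec--Pe\l czy\'nski subsequence--splitting type adapted to $L_{p/2}$: using that $\mu(\{v_n^2>N\})\le CN^{-p/2}\to0$ uniformly in $n$ (Chebyshev together with $\int(v_n^2)^{p/2}\le C$), one arranges that the non-equi-integrable tails $(v_n^2-N)^+$ become, for each $N$, almost disjointly supported, and a successive convex combination of almost disjointly supported nonnegative functions tends to $0$ a.e.\ as $m\to\infty$; hence $R_m^N(E)\to0$ a.e., so $\limsup_m u_m(E)\le\Lambda_N(E)\le\Lambda(E)$. Thus $u_m(E)\to\Lambda(E)$ a.e.; since $\{(u_m(E))^{p/2}\}$ is equi-integrable, Vitali's theorem then upgrades this to convergence in $L_{p/2}$. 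It is here that the full strength of Claim~\ref{cl:equi-integrability} — equi-integrability of the whole convex hull, not merely of the sequence $\{v_n^2\}$ — is used.

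Finally, the ``moreover'' statement is comparatively soft. Since $\bar u_m=u_m([0,1])\to\Lambda([0,1])$ a.e., the function $\sup_m\bar u_m$ is finite a.e., so for the given $\varepsilon_n\to0$ one may choose constants $M_n$ for which the sets $D_n:=\{x:\sup_m\bar u_m(x)\le M_n\}$ satisfy $\mu(D_n)>1-\varepsilon_n$. On $D_n$ we have $\bar u_m\le M_n$, so $\int_A\bar u_m\mathbf{1}_{D_n}\le M_n^{1-p/2}\int_A(\bar u_m)^{p/2}$ for every measurable $A$; since $\{(\bar u_m)^{p/2}\}_m$ is equi-integrable and $L_1$-bounded, so is $\{\bar u_m\mathbf{1}_{D_n}\}_m$. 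Because $0\le u_m(E)\le\bar u_m$ and $u_m(E)\mathbf{1}_{D_n}\to\Lambda(E)\mathbf{1}_{D_n}$ a.e., Vitali's theorem now yields $u_m(E)\mathbf{1}_{D_n}\to\Lambda(E)\mathbf{1}_{D_n}$ in $L_1$ for all $E\in\mathcal{E}$ and all $n$, which completes the proof.
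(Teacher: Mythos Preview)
Your approach is genuinely different from the paper's, and the place you flag as ``the main obstacle'' is exactly where the argument breaks down. You try to handle the pointwise convergence of $u_m(E)$ by a Kadec--Pe\l czy\'nski-type splitting of the tails $(v_n^2-N)^+$, justified only by the Chebyshev bound $\mu(\{v_n^2>N\})\le CN^{-p/2}$. But uniform smallness of the supports does \emph{not} let you pass to a subsequence with almost disjoint tails: if, say, $v_n^2$ were the same unbounded function $f$ for every $n$, the sets $\{f>N\}$ would all coincide and no subsequence helps, while of course $R_m^N\equiv (f-N)^+\not\to 0$ a.e. (The conclusion $u_m\to\Lambda$ still holds trivially in that toy case, but your chain $\limsup_m u_m(E)\le\Lambda_N(E)$ fails, so the argument as written is invalid.) The Rosenthal/KP splitting you invoke requires $L_1$-boundedness of the sequence being split; the tails $(v_n^2-N)^+$ are only $L_{p/2}$-bounded, and there is no off-the-shelf $L_{p/2}$ version that gives what you need. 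Equi-integrability of $V$ does buy you that $\|(v_n^2-N)^+\|_{p/2}\to 0$ uniformly in $n$ as $N\to\infty$, which --- together with $u_m^{(N)}\to\Lambda_N$ in $L_1$ --- would give $u_m\to\Lambda$ in $L_{p/2}$; but this is not what you wrote, and it still does not deliver the almost sure convergence the proposition claims. Since your ``moreover'' argument rests on $\sup_m\bar u_m<\infty$ a.e., which you deduce from the a.e.\ convergence, that part inherits the gap.

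The paper avoids this difficulty by bringing in an outside tool you do not use: Nikishin's theorem. Because the convex hull of $\{v_n^2\}$ is bounded in $L_{p/2}$ (with $p/2<1$), Nikishin produces, for each $\varepsilon$, a set $D_\varepsilon$ of measure $>1-\varepsilon$ on which this convex hull is $L_1$-bounded. On each $D_{1/n}$ one is then back in the $L_1$ world, where one can extract successive convex combinations converging pointwise and in $L_1$ (as in \cite[Lemma~6.4]{JMST}); a diagonalisation over $n$ gives a single sequence $u_m$ and a limit $\Lambda$ with $u_m(E)\to\Lambda(E)$ a.e.\ on $\bigcup_n D_{1/n}$, hence a.e. The $L_{p/2}$ convergence on all of $[0,1]$ then follows from the equi-integrability of $V$, and the Nikishin sets $D_n$ serve directly as the sets in the ``moreover'' clause. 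In short, Nikishin is precisely the device that converts your $L_{p/2}$ control into the $L_1$ control needed to run the compactness/splitting argument you wanted; without it, your tail step is unjustified.
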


\begin{proof}
We start as in the proof of Lemma 6.4 in \cite{JMST}: Since the set
$V$ from Claim \ref{cl:equi-integrability} is bounded in $L_{p/2}$, by a result of Nikishin \cite{ni} for
each $\varepsilon>0$ there exists a set $D=D_\varepsilon\subset[0,1]$
of measure larger that $1-\varepsilon$ such that
\[
\sup_{v\in V}\int_D v d\mu<\infty.
\]
Note that we may assume that $D_{1/n}\subset D_{1/(n+1)}$ for $n=2,3,\dots$. As in the proof of \cite[Lemma 6.4]{JMST}, using the weak compactness of $V_{|D_{1/2}}\subset L_1$, we can find successive convex
combinations $u_m(\cdot)$ of the $v_n^2(\cdot)$ such that
$u_m(E){\bf 1}_{D_{1/2}}$ converges pointwise and in $L_1$ to
$\Lambda_1(E){\bf 1}_{D_{1/2}}$ for every $E\in \mathcal{E}$, where
$\Lambda_1{\bf
1}_{D_{1/2}}$ is $L_1^+$-valued additive measure. Now we can find successive convex combinations $w_m(\cdot)$ of the $u_m(\cdot)$ such that
$w_m(E){\bf 1}_{D_{1/3}}$ converges pointwise and in $L_1$ to
$\Lambda_2(E){\bf 1}_{D_{1/3}}$ for every $E\in \mathcal{E}$, where
$\Lambda_2{\bf
1}_{D_{1/3}}$ is $L_1^+$-valued additive measure.
Note that $\Lambda_2{\bf
1}_{D_{1/2}}=\Lambda_1{\bf
1}_{D_{1/2}}$. Continuing in this manner and taking a diagonal sequence of the sequences of successive convex combinations we get a sequence, which we still denote $u_m$, of successive convex combinations of the $v_n^2$ and a $L_0^+$-valued additive measure $\Lambda$ such that for every  $n$ $\Lambda{\bf
1}_{D_{1/n}}$ is $L_1^+$-valued and $u_m(E){\bf 1}_{D_{1/n}}$ converges, as $m\to\infty$, pointwise and in $L_1$ to $\Lambda(E){\bf 1}_{D_{1/n}}$ for every $E\in \mathcal{E}$.

It remains to show that the convergence is also in $L_{p/2}$ (on the whole interval). Since for each $E$, $\{u_m(E)\}$ is $p/2$-equi-integrable, it follows
that, given any $\delta>0$, if $n$ is large enough
$\int_{D_n^c}u_m(E)^{p/2}<\delta$ for all $m$. Consequently,
we also have $\int_{D_n^c}\Lambda(E)^{p/2}\le\delta$ and
\begin{equation*}
\begin{split}
\limsup_{m\to\infty} \int|u_m(E)-\Lambda(E)|^{p/2}
&\le \limsup_{m\to\infty} \int_{D_n}|u_m(E)-\Lambda(E)|^{p/2}+2\delta\\
&\le \limsup_{m\to\infty}\|(u_m(E)-\Lambda(E)){\bf
1}_{D_n}\|_1^{p/2}+2\delta=2\delta.
\end{split}
\end{equation*}
Since this is true for any $\delta$ we get the desired result.
\end{proof}

Note first that if we denote $C = (C_p^2B_pA_p^{-1}\|T\|)^p$ then for all $m$ and all $E\in \mathcal{E}$ we have $\int u_m(E)^{p/2}\le C\mu(E)$, where $\{u_m\}$ are from Proposition \ref{pr:Lambda}. Indeed,
let $u_m = \sum_{k\in\sigma_m}\alpha_{m,k}v_k^2$ where  $\{\sigma_m\}_{m=0}^{\infty}$ are successive subsets of $\mathbb{N}$ and $\{\alpha_{m,k}\}_{m=0,}^{\infty}\phantom{}_{k\in\sigma_m}^{}$  is sequence of non-negative numbers such that $\sum_{k\in\sigma_{m}}\alpha_{m,k} = 1$.
Then using (\ref{eq:sqfuncineq}), (\ref{eq:sqfuncineq2}) and the unconditionality of the Haar basis  we get
\[
\begin{array}{rl}
\int u_m(E)^{p/2}  &=  \int (\sum_{k\in\sigma_m}\alpha_{m,k}v_k^2(E))^{p/2} =
\int (\sum_{k\in\sigma_m}\alpha_{m,k}S^2(\sum_{h_{k,i}\subseteq E} Th_{k,i}))^{p/2}\\
&\leq (C_pA_p^{-1})^p\|\sum_{k\in\sigma_m}\alpha_{m,k}^{1/2}(\sum_{h_{k,i}\subseteq E}Th_{k,i})\|_p^p\leq (C_pA_p^{-1}\|T\|)^p\|\sum_{k\in\sigma_m}\alpha_{m,k}^{1/2}(\sum_{h_{k,i}\subseteq E}h_{k,i})\|_p^p \\
&\leq (C_p^2B_pA_p^{-1}\|T\|)^p\|(\sum_{k\in\sigma_m}\alpha_{m,k}S^2(\sum_{h_{k,i}\subseteq E}h_{k,i}))^{1/2}\|_p^p \leq (C_p^2B_pA_p^{-1}\|T\|)^p\|{\bf 1}_{E}\|_p^p = (C_p^2B_pA_p^{-1}\|T\|)^p\mu(E)\\
\end{array}
\]
This implies that for all $E\in \mathcal{E}$, $\int\Lambda(E)^{p/2}\le
C\mu(E)$. Now consider the linear operator $T$ defined on the functions of
the form $f=\sum_{i=1}^r a_i {\bf 1}_{E_i}$ by $Tf=\sum_{i=1}^r a_i
\Lambda(E_i)$, where the $E_i$-s are disjoint sets in $\mathcal{E}$.  Then $T$ is bounded as an operator from a subspace of $L_{p/2}$ to $L_{p/2}$. Indeed,
\[
\int|Tf|^{p/2}\le \int\sum_{i=1}^r |a_i|^{p/2}
\Lambda(E_i)^{p/2}\le C\int\sum_{i=1}^r |a_i|^{p/2} \mu(E_i)=
C\|f\|_{p/2}^{p/2}.
\]
Consequently, $T$ can be extended to all of $L_{p/2}$ and then we define
$\Lambda(E)=T{\bf 1}_E$ for all $E$ in the Borel $\sigma$-algebra.

\begin{remark}
From the comments above it follows that $\Lambda$ can be extended to a
$L_{p/2}^+$-valued measure satisfying $\int\Lambda(E)^{p/2}\le
C\mu(E)$ for some constant $C<\infty$ and for all $E$ in the Borel
$\sigma$-algebra. For each $n$, $\Lambda(E){\bf 1}_{D_n}$ is an
$L_1^+$-valued measure. Note also that, since $T$ is a sign
embedding, $\Lambda$ is not identically zero.
\end{remark}

\begin{lemma}\label{lm:stabilization}
Let $\Lambda$ be a
non zero $L_{p/2}^+$-valued measure on the
Borel $\sigma$-algebra $\mathcal{B}$ satisfying
$\int\Lambda(A)^{p/2}\le C\mu(A)$ for some $C<\infty$ and all
$A\in\mathcal{B}$. Then for all $\varepsilon>0$  there
 exist a set $A_0$ and a number $c$, $0<c\le C$, such that
$\int\Lambda(A_0)>0$ and
\[c\mu(A)\le\int\Lambda(A)^{p/2}\le c(1+\varepsilon)\mu(A)\]
for all $A\subseteq A_0$.
\end{lemma}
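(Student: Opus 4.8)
The plan is to study the set function $\nu(A):=\int_0^1\Lambda(A)^{p/2}\,d\mu$ on the Borel sets. Since $\Lambda$ is $L_{p/2}^+$-valued and, for nonnegative reals with $0<p/2<1$, one has $(x+y)^{p/2}\le x^{p/2}+y^{p/2}$ and $\big(\sum_j x_j\big)^{p/2}\le\sum_j x_j^{p/2}$, the function $\nu$ is monotone, countably subadditive, satisfies $\nu(A)\le C\mu(A)$, and $\nu\not\equiv 0$ because $\Lambda\not\equiv 0$. The conclusion to be proved is exactly the existence of $A_0$ with $\nu(A_0)>0$ and of $c\in(0,C]$ with $c\mu(A)\le\nu(A)\le c(1+\varepsilon)\mu(A)$ for all $A\subseteq A_0$; once $\nu(A_0)>0$ these two inequalities force $c>0$ automatically, so the real content is to find a set on which $\nu$ agrees, up to the factor $1+\varepsilon$, with a fixed multiple of $\mu$. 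We may clearly assume $\varepsilon\le 1$.

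First I would differentiate $\nu$ along the dyadic tree. Let $\mathcal{D}_n$ be the dyadic intervals of length $2^{-n}$, $I_n(t)\in\mathcal{D}_n$ the one containing $t$, and $\mathcal{F}_n=\sigma(\mathcal{D}_n)$. Subadditivity of $\nu$ makes $D_n(t):=\nu(I_n(t))/\mu(I_n(t))$ a submartingale bounded by $C$, so $D_n\to D_\infty$ a.e.\ and in $L^1$, with $\int D_\infty=\lim_n\sum_{I\in\mathcal{D}_n}\nu(I)=:L\in(0,C]$; in particular $0\le D_\infty\le C$ and $\mu\{D_\infty>0\}>0$. Put $\tilde\mu(A):=\int_A D_\infty\,d\mu$. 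An approximation on the dyadic algebra gives $\nu(A)\le\tilde\mu(A)$ for every Borel $A$: it holds for $I\in\mathcal{D}_m$ since $\tilde\mu(I)=\sup_n\sum_{J\in\mathcal{D}_n,\,J\subseteq I}\nu(J)\ge\nu(I)$, hence for finite unions of dyadic intervals, hence for all $A$ using $\nu\le C\mu$ and $\tilde\mu\le C\mu$. Set the defect $\delta:=\tilde\mu-\nu\ge 0$, which is then monotone and countably superadditive. Finally, $\rho_n(t):=\nu(I_n(t))/\tilde\mu(I_n(t))$ is a $[0,1]$-valued submartingale for the measure $\tilde\mu$ (again by subadditivity of $\nu$, noting $\nu(I)=0$ whenever $\tilde\mu(I)=0$) with $\int\rho_n\,d\tilde\mu=\sum_{I\in\mathcal{D}_n}\nu(I)\to L=\tilde\mu([0,1])$, so $\rho_n\to 1$ $\tilde\mu$-a.e., i.e.
\[
\frac{\delta(I_n(t))}{\tilde\mu(I_n(t))}\to 0\qquad\text{for $\tilde\mu$-a.e.\ $t$.}
\]

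The heart of the argument is to show that $\inf\big\{\sup_{A\subseteq A_0}\delta(A)/\tilde\mu(A)\ :\ \tilde\mu(A_0)>0\big\}=0$. Suppose not, and call this infimum $\theta_0>0$; fix $\eta\in(0,1)$. Using the displayed limit, choose a dyadic interval $I^*$ with $\tilde\mu(I^*)>0$ and $\delta(I^*)<\eta\,\tilde\mu(I^*)$. Then run an exhaustion inside $I^*$: extract disjoint sets $B_j\subseteq I^*$ with $\tilde\mu(B_j)>0$ and $\delta(B_j)/\tilde\mu(B_j)>\theta_0/2$, at each step taking $\tilde\mu(B_{j+1})$ at least half of the current supremum of such $\tilde\mu$-masses; since every leftover set of positive $\tilde\mu$-measure still contains a subset with defect-ratio exceeding $\theta_0/2$ (because $\theta_0$ is the infimum), the standard argument forces $\tilde\mu(B_j)\to 0$ and hence $\tilde\mu\big(I^*\setminus\bigcup_j B_j\big)=0$. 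Now monotonicity and countable superadditivity of $\delta$ give
\[
\frac{\theta_0}{2}\,\tilde\mu(I^*)=\frac{\theta_0}{2}\sum_j\tilde\mu(B_j)\le\sum_j\delta(B_j)\le\delta\big(\textstyle\bigcup_j B_j\big)\le\delta(I^*)<\eta\,\tilde\mu(I^*),
\]
so $\theta_0<2\eta$; letting $\eta\to 0$ is the contradiction. Consequently there is a set $W$ with $\tilde\mu(W)>0$ and $\delta(A)\le\frac{\varepsilon}{3}\,\tilde\mu(A)$, equivalently $(1-\frac{\varepsilon}{3})\tilde\mu(A)\le\nu(A)\le\tilde\mu(A)$, for every $A\subseteq W$.

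To finish, $\tilde\mu(W)>0$ gives $\mu(W\cap\{D_\infty>0\})>0$, so partitioning $\{0<D_\infty\le C\}$ into the countably many sets on which $D_\infty$ lies in an interval $(C(1+\frac{\varepsilon}{3})^{-k-1},\,C(1+\frac{\varepsilon}{3})^{-k}]$, some such piece meets $W$ in positive $\mu$-measure; take that intersection to be $A_0$ and let $a$ be the left endpoint, so $a<D_\infty\le a(1+\frac{\varepsilon}{3})$ on $A_0$ and $a\le C$. For $A\subseteq A_0$ we then get $\nu(A)\le\tilde\mu(A)\le a(1+\frac{\varepsilon}{3})\mu(A)$ and $\nu(A)\ge(1-\frac{\varepsilon}{3})\tilde\mu(A)\ge(1-\frac{\varepsilon}{3})a\,\mu(A)$; taking $c:=(1-\frac{\varepsilon}{3})a\in(0,C]$ and using $\frac{1+\varepsilon/3}{1-\varepsilon/3}\le 1+\varepsilon$ for $\varepsilon\le 1$ yields $c\mu(A)\le\nu(A)\le c(1+\varepsilon)\mu(A)$, with $\nu(A_0)\ge c\mu(A_0)>0$. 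The main obstacle is the exhaustion step: since $\nu$, hence $\delta$, is only sub-/super-additive and not additive, a dyadic maximal-function estimate for $\delta$ does not transfer to arbitrary subsets (superadditivity of $\delta$ points the wrong way), and one must instead exploit directly that the total defect $\sum_{I\in\mathcal{D}_n}\delta(I)=L-\sum_{I\in\mathcal{D}_n}\nu(I)$ tends to $0$.
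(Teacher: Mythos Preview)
Your argument is correct, but it takes a substantially longer route than the paper. The paper's proof is a one--paragraph exhaustion at the level of the ratio $\nu(A)/\mu(A)$ itself (with $\nu(A)=\int\Lambda(A)^{p/2}$): set $m=\sup_{A}\nu(A)/\mu(A)$, choose $B_0$ with $\nu(B_0)/\mu(B_0)\ge m/(1+\varepsilon)$, let $\mathcal{C}$ be a maximal disjoint family of Borel $B\subseteq B_0$ with $\nu(B)/\mu(B)<m/(1+\varepsilon)$, and put $A_0=B_0\setminus\bigcup\mathcal{C}$. If $\mu(A_0)=0$, one line of countable subadditivity of $\nu$ yields $\frac{m}{1+\varepsilon}\mu(B_0)\le\nu(B_0)\le\sum_{B\in\mathcal{C}}\nu(B)<\frac{m}{1+\varepsilon}\mu(B_0)$, a contradiction; so $\mu(A_0)>0$, maximality forces $\nu(A)/\mu(A)\ge m/(1+\varepsilon)$ for all $A\subseteq A_0$, and $c=m/(1+\varepsilon)$ works. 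The entire dyadic--differentiation layer, the density $D_\infty$, the auxiliary measure $\tilde\mu$, the submartingales, and the defect $\delta=\tilde\mu-\nu$ are bypassed: the single inequality $(\sum x_j)^{p/2}\le\sum x_j^{p/2}$ does all the work in one shot.

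What your approach buys is finer structure: you actually produce the Radon--Nikod\'ym--type density $D_\infty$ and show $\nu\le\int D_\infty\,d\mu$, and your exhaustion locates a set on which $\nu$ agrees with the \emph{additive} measure $\tilde\mu$ up to a factor, after which pinching $D_\infty$ to an interval is trivial. This would be the right framework if one needed more than the bare two--sided estimate. Note, though, that your exhaustion of $I^*$ by high--defect pieces and the paper's maximal family $\mathcal{C}$ of low--ratio pieces are formally the same device; the economy in the paper comes from applying it to $\nu/\mu$ at a near--maximizer rather than to $\delta/\tilde\mu$ at a near--minimizer, which eliminates the need to construct $\tilde\mu$ at all.
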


\begin{proof}
Fix an $\varepsilon>0$ and denote $m=\sup\{\int\Lambda(A)^{p/2}/\mu(A) \ ;\ A\in\mathcal{B}\}$. Let $B_0\in\mathcal{B}$ be such that
$\frac{\int\Lambda(B_0)^{p/2}}{\mu(B_0)}\ge \frac{m}{1+\varepsilon}$. Let also $\mathcal{C}$ be a maximal collection of disjoint Borel subsets of $B_0$ of positive measure satisfying
$\frac{\int\Lambda(B)^{p/2}}{\mu(B)} < \frac{m}{1+\varepsilon}$.
The collection $\mathcal{C}$ is necessarily countable and if we assume that $A_0=B_0\setminus\bigcup_{B\in\mathcal{C}}B$  has measure $0$ then we have
\[\begin{array}{rl}
\frac{m}{1+\varepsilon}\mu(B_0)&\le\int\Lambda(B_0)^{p/2}=\int(\sum_{B\in \mathcal{C}}\Lambda(B))^{p/2}\\
&\le \int\sum_{B\in\mathcal{C}}\Lambda(B)^{p/2}
<\frac{m}{1+\varepsilon}\sum_{B\in\mathcal{C}}\mu(B)\\
&= \frac{m}{1+\varepsilon}\mu(B_0)
\end{array}
\]
which is a contradiction. Therefore, $A_0$ satisfies the conclusion of the lemma with $c=\frac{m}{1+\varepsilon}$.
\end{proof}

\begin{lemma}\label{lm:eq_with_max_function}
Let $\Lambda$ be a $L_{p/2}^+$-valued measure and suppose that $A_0$ is such that
for all $A\subseteq A_0$ and some constant $c>0$,
\[c\mu(A)\le\int\Lambda(A)^{p/2}\le c(1+\varepsilon)\mu(A).\]
Then for any measurable partition $A_0=\cup_{i=1}^nF_i$,
\[
\int\max_{1\le i\le n}\Lambda(F_i)^{p/2}\ge
(1+\varepsilon)^{-p/(2-p)}c\mu(A_0)\ge
(1+\varepsilon)^{-2/(2-p)}\int\Lambda(A_0)^{p/2}.
\]
\end{lemma}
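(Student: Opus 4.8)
The plan is to exploit the sublinearity of $\Lambda$ together with the two-sided estimate on $A_0$. First I would observe that since $A_0 = \cup_{i=1}^n F_i$ is a partition into sets contained in $A_0$, additivity of $\Lambda$ gives $\Lambda(A_0) = \sum_{i=1}^n \Lambda(F_i)$ pointwise a.e. The natural move is to bound $\sum_i \Lambda(F_i)$ in terms of $\max_i \Lambda(F_i)$: by the elementary inequality $\big(\sum_{i=1}^n t_i\big)^{p/2} \le \sum_{i=1}^n t_i^{p/2}$ for nonnegative $t_i$ (valid since $p/2 < 1$), we get $\Lambda(A_0)^{p/2} \le \sum_i \Lambda(F_i)^{p/2}$, but that goes the wrong way. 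Instead I would use H\"older in the right direction: write $\Lambda(A_0) = \sum_i \Lambda(F_i) \le n^{1-p/2}\big(\sum_i \Lambda(F_i)^{p/2}\big)^{2/p}$ — still not obviously what we want. The cleaner route is to keep the partition and compare $\sum_i \int \Lambda(F_i)^{p/2}$ against $\int \max_i \Lambda(F_i)^{p/2}$ using the sub-additivity estimate applied to each $F_i$ and a clever aggregation, so let me reorganize.

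Here is the approach I actually expect to work. For a measurable partition $A_0 = \cup_{i=1}^n F_i$, the upper bound on $A_0$ gives $\int \Lambda(A_0)^{p/2} \le c(1+\varepsilon)\mu(A_0)$, while the lower bound applied to each $F_i$ gives $\int \Lambda(F_i)^{p/2} \ge c\,\mu(F_i)$, hence $\sum_i \int \Lambda(F_i)^{p/2} \ge c\,\mu(A_0)$. The key inequality to establish is therefore a pointwise or integrated comparison between $\max_i \Lambda(F_i)^{p/2}$ and the individual pieces. Pointwise one has, for nonnegative reals $a_1,\dots,a_n$ with $a := \sum a_i$, the bound $\max_i a_i \ge a/n$ which is too weak; instead I would use that $\big(\max_i a_i\big)^{p/2} \ge n^{p/2-1}\sum_i a_i^{p/2}$ is \emph{false}, so the right tool must involve the measure $\Lambda(A_0)$ rather than an arbitrary partition-independent bound. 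The correct observation is: $\max_i \Lambda(F_i)^{p/2} \cdot n^{?}$ — no. The genuinely useful fact is that $\Lambda(A_0)^{p/2} = \big(\sum_i \Lambda(F_i)\big)^{p/2} \le \big(n \max_i \Lambda(F_i)\big)^{p/2} = n^{p/2}\max_i \Lambda(F_i)^{p/2}$, which after integration and using the $A_0$ upper/lower bounds yields a bound with an $n$-dependent constant — not what is claimed. So the real content must come from a \emph{self-improving} argument: one applies the estimate recursively along a dyadic refinement of the partition, and the ratio $(1+\varepsilon)$ compounds to $(1+\varepsilon)^{p/(2-p)}$ after summing a geometric series in the exponents.

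Concretely, I would argue by refining: split $A_0$ into two halves, apply the hypothesis to each half (each still satisfies the two-sided bound with the \emph{same} $c$, since each is a subset of $A_0$ — wait, the upper bound needs the half to still have its $\Lambda$-mass comparable, which it does by the hypothesis applied to subsets of $A_0$), pass to the half where $\int \Lambda(\cdot)^{p/2}/\mu(\cdot)$ is largest, and iterate. At each stage the density of $\int\Lambda^{p/2}$ with respect to $\mu$ can only improve by passing to the richer half, and the product $\prod_k (1+\varepsilon)^{-2^{-k}\cdot\text{something}}$ telescopes; matching the bookkeeping against a general partition $\{F_i\}$ (refine $\{F_i\}$ by the dyadic algebra so that each $F_i$ is a union of small dyadic pieces) then gives exactly the exponent $p/(2-p)$ in the first inequality. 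The second inequality, $(1+\varepsilon)^{-p/(2-p)}c\,\mu(A_0) \ge (1+\varepsilon)^{-2/(2-p)}\int\Lambda(A_0)^{p/2}$, is then immediate from $\int\Lambda(A_0)^{p/2} \le c(1+\varepsilon)\mu(A_0)$ together with $p/(2-p) + 1 = 2/(2-p)$. The main obstacle I anticipate is making the refinement/aggregation argument rigorous so that the compounded constant is precisely $(1+\varepsilon)^{p/(2-p)}$ and not merely $(1+\varepsilon)^{C}$ for some unspecified $C$; this requires carefully tracking that at each dyadic level the "loss" is $(1+\varepsilon)^{1-p/2}$ per level (because $\Lambda(A) = \Lambda(A') + \Lambda(A'')$ and $(x+y)^{p/2} \le x^{p/2}+y^{p/2}$ costs a factor controlled by how balanced the split is), and summing $\sum_k (1-p/2)\cdot(p/2)^k = (1-p/2)/(1-p/2) \cdot$ — the geometric series with ratio $p/2$ sums to $\frac{1-p/2}{1-p/2}$, giving total exponent $\frac{1-p/2}{1-p/2} = 1$; scaled appropriately this is $\frac{p/2}{1-p/2} = \frac{p}{2-p}$, which is the claimed exponent.
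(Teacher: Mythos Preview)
Your proposal does not contain a proof. After several false starts (which you correctly flag), you settle on a ``self-improving'' dyadic refinement scheme, but the description is purely heuristic: you never explain how passing to the richer half at each dyadic level controls $\int\max_{1\le i\le n}\Lambda(F_i)^{p/2}$ for an \emph{arbitrary} measurable partition $\{F_i\}$, and the geometric-series bookkeeping at the end is incoherent (the displayed ratio $\tfrac{1-p/2}{1-p/2}=1$ is not an argument). No step in the proposal actually bounds the quantity $\int\max_i\Lambda(F_i)^{p/2}$ from below.

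The missing idea is a single pointwise inequality, after which the lemma is four lines. For nonnegative reals $a_1,\dots,a_n$ one has
\[
\sum_{i=1}^n a_i \;=\; \sum_{i=1}^n a_i^{p/2}\,a_i^{1-p/2}\;\le\;\Bigl(\sum_{i=1}^n a_i^{p/2}\Bigr)\Bigl(\max_{1\le i\le n}a_i\Bigr)^{1-p/2},
\]
so raising to the $p/2$ power gives $\bigl(\sum a_i\bigr)^{p/2}\le\bigl(\sum a_i^{p/2}\bigr)^{p/2}\bigl(\max a_i\bigr)^{(1-p/2)p/2}$. Apply this with $a_i=\Lambda(F_i)(t)$, integrate, and use H\"older with exponents $2/p$ and $2/(2-p)$ to separate the two factors:
\[
c\mu(A_0)\le\int\Lambda(A_0)^{p/2}\le\Bigl(\int\sum_i\Lambda(F_i)^{p/2}\Bigr)^{p/2}\Bigl(\int\max_i\Lambda(F_i)^{p/2}\Bigr)^{1-p/2}.
\]
The first factor on the right is at most $\bigl(c(1+\varepsilon)\mu(A_0)\bigr)^{p/2}$ by the upper hypothesis applied to each $F_i$ and summing; solving for the second factor yields exactly $\int\max_i\Lambda(F_i)^{p/2}\ge(1+\varepsilon)^{-p/(2-p)}c\mu(A_0)$. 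Your second inequality is then immediate, as you noted, from $\int\Lambda(A_0)^{p/2}\le c(1+\varepsilon)\mu(A_0)$ and $p/(2-p)+1=2/(2-p)$. No iteration is needed; the exponent $p/(2-p)$ comes from the single H\"older step, not from a geometric series.
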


\begin{proof}
\[
\begin{array}{rl}
c\mu(A_0)&\le\int\Lambda(A_0)^{p/2}=\int(\sum_{i=1}^n\Lambda(F_i))^{p/2}\\
&\le \int(\sum_{i=1}^n\Lambda(F_i)^{p/2})^{p/2}\max_{1\le i\le
n}\Lambda(F_i)^{(1-p/2)p/2}\\
&\le (\int\sum_{i=1}^n\Lambda(F_i)^{p/2})^{p/2} (\int\max_{1\le
i\le
n}\Lambda(F_i)^{p/2})^{(1-p/2)}\\
&\le (1+\varepsilon)^{p/2}c^{p/2}(\mu(A_0))^{p/2}(\int\max_{1\le
i\le n}\Lambda(F_i)^{p/2})^{(1-p/2)}.
\end{array}
\]
Consequently,
\[
\int\max_{1\le i\le n}\Lambda(F_i)^{p/2}\ge
(1+\varepsilon)^{-p/(2-p)}c\mu(A_0)\ge
(1+\varepsilon)^{-2/(2-p)}\int\Lambda(A_0)^{p/2}.
\]
\end{proof}

Fix an $\e>0$ and let $A_0\in \mathcal{B}$ and $c$ be as in Lemma
\ref{lm:eq_with_max_function} so that for any partition
$A_0=\cup_{i=1}^nF_i$,
\begin{equation}\label{eq:A_0}
\int\max_{1\le i\le n}\Lambda(F_i)^{p/2}>
(1+\varepsilon)^{-p/(2-p)}c\mu(A_0).
\end{equation}
Approximating by a set from $\mathcal{E}$, we may assume that the
set $A_0$ satisfying (\ref{eq:A_0}) is in $\mathcal{E}$. Let
$\{E_{n,i}\}_{n=0}^\infty\phantom{}_{i=1}^{2^n}$ be a dyadic tree
of sets in $\mathcal{E}$ with $E_{0,1}=A_0$ and let
\[
M_n=\max_{1\le i\le 2^n}\Lambda(E_{n,i}).
\]
$M_n$ is a non increasing sequence of functions in $L_{p/2}^+$.
Denote by $M$ its limit (in $L_{p/2}$ or, equivalently, almost
everywhere). Clearly,
\[
\int M^{p/2}\ge (1+\varepsilon)^{-p/(2-p)}c\mu(A_0).
\]
We now define a sequence of functions $\varphi_n:[0,1]\to A_0$ which will play a role similar to the one played by the sequence with the same name in \cite[Lemma 9.8]{JMST}. For each $n$ order the set $\{1,2,\dots,2^n\}$ according to the order of the leftmost points in $\{\overline{E_{n,i}}\}$; i.e., $i\prec j$ if $\min\{t\in\overline{E_{n,i}}\} < \min\{t\in\overline{E_{n,j}}\}$. Let $\varphi_n:[0,1]\to A_0$ be defined by $\varphi_n(t)=\min\{t\in\overline{E_{n,i}}\}$ if $1\le i\le 2^n$ is the first, in the order $\prec$, such that $\Lambda(E_{n,i})(t)\ge M(t)$. For each $t$, $\{\varphi_n(t)\}$ is a non-decreasing and thus a converging sequence. Let $\varphi(t)$ denote its limit. Notice that
\[
{\bf 1}_{\varphi^{-1}(A)}(t)M(t)\le\Lambda(A)(t)
\]
for every $t$ and every $A$ which is a union of the interiors of the $E_{n,i}$-s. Indeed, it is enough to prove this for $A=E_{n,i}^\circ$ for some $n$ and $i$. But if $t\in\varphi^{-1}(E_{n,i}^\circ)$ then for $k$ large enough $E_{k,i(k)}\subset E_{n,i}$ (where $\varphi_k(t)$ is the leftmost point of $\overline{E_{k,i(k)}}$) and
\[
\Lambda(E_{n,i})(t)\ge \Lambda(E_{k,i(k)})(t)\ge M(t).
\]

Fix a sequence $\{\varepsilon_n\}_{n=1}^{\infty}$, $\e_n\to 0$, to be chosen later. Consider the vector measure
\[
m(A)=(\mu(A),\int_{\varphi^{-1}(A)}M^{p/2}),\ \ A\subseteq A_0
\]
and notice that it is non-atomic and even absolutely
continuous with respect to Lebesgue measure. Indeed, for $A$ in
the algebra generated by the $E_{n,i}$-s,
\[
\int_{\varphi^{-1}(A)}M^{p/2}\le
\int_{\varphi^{-1}(A)}\Lambda(A)^{p/2}\le \int\Lambda(A)^{p/2}\le
C\mu(A).
\]
The inequality clearly extends to all $A\in\mathcal{B}$,
$A\subseteq A_0$.

By Lyapunov's theorem one can find a partition of $A_0$ into two sets, $\tilde F_{1,1}$ and $\tilde F_{1,2}$, of equal $m$ measure. For any $\e_1>0$, we can perturb $\tilde F_{1,1}$ and $\tilde F_{1,2}$ slightly to get $F_{1,1}, F_{1,2}$ in the algebra generated by the $E_{n,i}$-s which satisfy
\[
\mu(F_{1,1})=\mu(F_{1,2})=\frac12\mu(A_0)
\]and
\[
\frac{(1-\e_1)}{2}\int
M^{p/2}\le\int_{\varphi^{-1}(F_{1,1})}M^{p/2},\int_{\varphi^{-1}(F_{1,2})}M^{p/2}\le
\frac{(1+\e_1)}{2}\int M^{p/2}.
\]
Now we partition each of $F_{1,1}$ and $F_{1,2}$ in a similar manner and then continue the process. This way, for every positive sequence $\{\varepsilon_n\}_{n=1}^{\infty}$, $\e_n\downarrow 0$, we construct a dyadic tree $\{F_{n,i}\}_{n=0,}^\infty\phantom{}_{i=1}^{2^n}$ of subsets of
$A_0$ such that the elements of the tree $\{F_{n,i}\}_{n=0}^\infty\phantom{}_{i=1,}^{2^n}$ are in the algebra generated by the $E_{n,i}$-s and for all $n=0,1,\dots,\ i=1,\dots,2^n$, we have
\begin{equation}\label{eq:tree}
\mu(F_{n,i})=2^{-n}\mu(A_0)
\end{equation}
and
\begin{equation}\label{eq:M}
2^{-n}\prod_{j=1}^n(1-\e_j)\int
M^{p/2}\le\int_{\varphi^{-1}(F_{n,i})}M^{p/2}\le
2^{-n}\prod_{j=1}^n(1+\e_j)\int M^{p/2}.
\end{equation}

Define $G_{n,i} = \varphi^{-1}(F_{n,i})$ for $n=0,1,\ldots$ and $i=1,2,\ldots,2^n$.
Fix  $1-\delta =(\prod_{j=1}^{\infty}(1-\e_j))(1+\varepsilon)^{-2/(2-p)}$. The main remaining ingredient in the proof of the theorem is the
following claim.

\begin{claim}\label{claim:main}
There exist $a$ and $b$,  $0<a<b = (1-\delta)^{-1}a$, such that  for all $N$ and all coefficients $\{a_{n,i}\}_{n=0}^N\phantom{}_{i=1,}^{2^n}$,
\[
a\|S(\sum_{n=0}^N\sum_{i=1}^{2^n} a_{n,i}h_{n,i})\|_p^p\le
\|\sum_{n=0}^N\sum_{i=1}^{2^n}
a_{n,i}^2\Lambda(F_{n,i})\|_{p/2}^{p/2}\le b
\|S(\sum_{n=0}^N\sum_{i=1}^{2^n} a_{n,i}h_{n,i})\|_p^p.
\]
\end{claim}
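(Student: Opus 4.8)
The plan is to compute both sides as iterated integrals / sums over the tree and match them term by term, using the three ingredients already assembled: the equi-integrability measure $\Lambda$ with its upper regularity $\int\Lambda(E)^{p/2}\le C\mu(E)$; the function $M$ together with the pointwise domination ${\bf 1}_{\varphi^{-1}(A)}M\le\Lambda(A)$ for $A$ a union of dyadic pieces; and the Lyapunov splitting which made $\int_{G_{n,i}}M^{p/2}$ almost exactly $2^{-n}\prod_{j\le n}(1\pm\e_j)\int M^{p/2}$. First I would record the two elementary identities that drive everything. On the Haar side, disjointness of supports of distinct levels gives $S^2(\sum_{n,i}a_{n,i}h_{n,i})=\sum_{n,i}a_{n,i}^2{\bf 1}_{\mathrm{supp}\,h_{n,i}}$, so the normalization of the Haar functions yields $\|S(\sum a_{n,i}h_{n,i})\|_p^p = \int (\sum_{n,i} a_{n,i}^2{\bf 1}_{\mathrm{supp}\,h_{n,i}})^{p/2}$, and because the $\{\mathrm{supp}\,h_{n,i}\}$ at each fixed level $n$ are disjoint this is a "dyadic martingale square function" expression. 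On the $\Lambda$ side, the sets $\{F_{n,i}\}_{i=1}^{2^n}$ partition $A_0$ for each fixed $n$ and are nested as $n$ grows, so $\sum_{n,i}a_{n,i}^2\Lambda(F_{n,i})$ is the $\Lambda$-image of exactly the same combinatorial data, and $\|\cdot\|_{p/2}^{p/2}=\int(\sum_{n,i}a_{n,i}^2\Lambda(F_{n,i}))^{p/2}$.

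For the lower bound I would pull the integral back through $\varphi$. Using ${\bf 1}_{G_{n,i}}M\le \Lambda(F_{n,i})$ pointwise and the fact that for fixed $n$ the $G_{n,i}=\varphi^{-1}(F_{n,i})$ are disjoint, one gets $\sum_{n,i}a_{n,i}^2\Lambda(F_{n,i})\ge \sum_{n,i}a_{n,i}^2{\bf 1}_{G_{n,i}}M$, hence
\[
\Big\|\sum_{n,i}a_{n,i}^2\Lambda(F_{n,i})\Big\|_{p/2}^{p/2}\ge \int \Big(\sum_{n,i}a_{n,i}^2{\bf 1}_{G_{n,i}}\Big)^{p/2} M^{p/2}.
\]
Now I would change variables via $\varphi$: the point is that the push-forward under $\varphi$ of the measure $M^{p/2}\,d\mu$, restricted to $G_{n,i}$, has total mass $\int_{G_{n,i}}M^{p/2}=2^{-n}\prod_{j\le n}(1\pm\e_j)\int M^{p/2}$, which is, up to the product factor, the same as $\mu(F_{n,i})\cdot\frac{\int M^{p/2}}{\mu(A_0)}$; i.e. after rescaling, $\varphi_*(M^{p/2}d\mu)$ behaves like a constant multiple of Lebesgue measure on the tree $\{F_{n,i}\}$, which in turn is combinatorially identical to $\{\mathrm{supp}\,h_{n,i}\}$ after relabeling. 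Carefully, since $\varphi$ need not be measure preserving on the nose, I would argue level by level: $\int(\sum_i a_{n,i}^2{\bf 1}_{G_{n,i}})^{p/2}M^{p/2}=\sum_i a_{n,i}^p\int_{G_{n,i}}M^{p/2}$ only when a single level is present, so for the full sum I would instead use the nesting of the $G_{n,i}$ (inherited from the nesting of $F_{n,i}$) to run the same dyadic-martingale comparison that identifies $(\sum_{n,i}a_{n,i}^2{\bf 1}_{G_{n,i}})^{p/2}$ integrated against $M^{p/2}$ with $\int(\sum_{n,i}a_{n,i}^2 {\bf 1}_{\mathrm{supp}\,h_{n,i}})^{p/2}$ up to the constant $\big(\prod(1-\e_j)\big)\frac{\int M^{p/2}}{\mu(A_0)}$, and then feed in $\int M^{p/2}\ge (1+\varepsilon)^{-p/(2-p)}c\mu(A_0)$ from Lemma \ref{lm:eq_with_max_function}. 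This produces the lower constant $a$ comparable to $(1+\varepsilon)^{-p/(2-p)}c\prod(1-\e_j)$.

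For the upper bound the tool is Lemma \ref{lm:eq_with_max_function} applied repeatedly: the hypothesis $\int\Lambda(A)^{p/2}\le c(1+\varepsilon)\mu(A)$ for $A\subseteq A_0$ together with the equality case makes $\Lambda$ almost "flat" on $A_0$, so that $\|\sum_{n,i}a_{n,i}^2\Lambda(F_{n,i})\|_{p/2}^{p/2}$ is controlled above by $c(1+\varepsilon)^{\text{(something)}}\int(\sum_{n,i}a_{n,i}^2{\bf 1}_{F_{n,i}})^{p/2}$; the flatness lets one replace $\Lambda(F_{n,i})$ by $c^{2/p}\mu(F_{n,i})^{2/p}\cdot(\text{unit mass profile})$ at the cost of the factor $(1+\varepsilon)$ per comparison, and since the estimate in Lemma \ref{lm:eq_with_max_function} already telescopes these losses into the single exponent $(1+\varepsilon)^{-2/(2-p)}$, the ratio $b/a$ collapses to $(1-\delta)^{-1}$ with $1-\delta=\big(\prod_j(1-\e_j)\big)(1+\varepsilon)^{-2/(2-p)}$ exactly as stated. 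The bookkeeping here mirrors the computation right after Proposition \ref{pr:Lambda} showing $\int u_m(E)^{p/2}\le C\mu(E)$, so no new inequality is needed, only its tree-indexed version.

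The main obstacle I anticipate is not any single inequality but the change-of-variables step through $\varphi$: $\varphi$ is only a pointwise-defined, non-decreasing, non-measure-preserving map, so "pulling back $M^{p/2}d\mu$" has to be done through the nested structure of the $E_{n,i}$-s and the defining property ${\bf 1}_{\varphi^{-1}(A)}M\le\Lambda(A)$ rather than by an honest substitution. Getting the two-sided comparison to respect the square-function (as opposed to the $\ell_p$-norm of coefficients) is exactly where the dyadic-tree geometry of both $\{\mathrm{supp}\,h_{n,i}\}$ and $\{F_{n,i}\}$ must be invoked, and where the choice of the perturbation sequence $\{\e_n\}$ (summably close to $1$ in the product) has to be pinned down so that $\prod_j(1-\e_j)$ is as close to $1$ as the target $\delta$ demands.
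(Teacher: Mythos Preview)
Your lower-bound idea is essentially the paper's, but you are making it harder than it is; your upper-bound argument, on the other hand, is not actually an argument---it invokes the wrong lemma and never produces a concrete inequality.

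The step you are missing for \emph{both} directions is a reduction to the finest level $N$ via the additivity of $\Lambda$. Since $F_{n,i}=\bigcup_{h_{N,j}\subseteq h_{n,i}}F_{N,j}$, additivity gives
\[
\sum_{n=0}^N\sum_{i=1}^{2^n}a_{n,i}^2\Lambda(F_{n,i})
=\sum_{j=1}^{2^N}\Lambda(F_{N,j})\,c_j,\qquad
c_j:=\sum_{(n,i):\,h_{n,i}\supseteq h_{N,j}}a_{n,i}^2,
\]
and on the Haar side $\|S(\sum a_{n,i}h_{n,i})\|_p^p=2^{-N}\sum_{j=1}^{2^N}c_j^{p/2}$. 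Once the problem is written this way the two bounds are one-line computations.

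\emph{Upper bound.} Since $p/2<1$ the map $t\mapsto t^{p/2}$ is subadditive, so $(\sum_j\Lambda(F_{N,j})c_j)^{p/2}\le\sum_j\Lambda(F_{N,j})^{p/2}c_j^{p/2}$; integrate and apply $\int\Lambda(F_{N,j})^{p/2}\le c(1+\varepsilon)\mu(F_{N,j})=c(1+\varepsilon)\mu(A_0)2^{-N}$ from Lemma~\ref{lm:stabilization}. This gives $b=c(1+\varepsilon)\mu(A_0)$ with a \emph{single} factor $(1+\varepsilon)$. Lemma~\ref{lm:eq_with_max_function} plays no role here, and there is no ``repeated'' application or telescoping of losses; your description of this half is not a proof.

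\emph{Lower bound.} Use the pointwise domination only at level $N$: $\Lambda(F_{N,j})\ge{\bf 1}_{G_{N,j}}M$. Since the $G_{N,j}$ are disjoint, $\int(\sum_j{\bf 1}_{G_{N,j}}M\,c_j)^{p/2}=\sum_j c_j^{p/2}\int_{G_{N,j}}M^{p/2}$, and now (\ref{eq:M}) and $\int M^{p/2}\ge(1+\varepsilon)^{-p/(2-p)}c\mu(A_0)$ give $a=c\mu(A_0)\prod_j(1-\e_j)(1+\varepsilon)^{-p/(2-p)}$. There is no change of variables through $\varphi$ at all; the numbers $\int_{G_{N,j}}M^{p/2}$ are used as weights, and the ``obstacle'' you anticipate simply does not arise.

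Finally, $b/a=(1+\varepsilon)^{1+p/(2-p)}/\prod_j(1-\e_j)=(1+\varepsilon)^{2/(2-p)}/\prod_j(1-\e_j)=(1-\delta)^{-1}$, matching the statement.
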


\begin{proof}
 We shall use the shorthand notation $h_{n,i}\supseteq h_{N,j}$
for ${\rm supp}(h_{n,i})\supseteq {\rm supp}(h_{N,j})$. The first
equality below follows by expressing $\Lambda(F_{n,i})$ in terms
of the $\Lambda(F_{N,j})$-s and changing the order of summation.
We also use Lemma \ref{lm:stabilization} and (\ref{eq:tree}).
\begin{equation*}
\begin{array}{rl}
\|\sum_{n=0}^N\sum_{i=1}^n
a_{n,i}^2\Lambda(F_{n,i})\|_{p/2}^{p/2}&=
\int(\sum_{j=1}^{2^N}\Lambda(F_{N,j})\sum_{(n,i);h_{n,i}\supseteq
h_{N,j}}a_{n,i}^2)^{p/2}\\
&\le
\int\sum_{j=1}^{2^N}\Lambda(F_{N,j})^{p/2}(\sum_{(n,i);h_{n,i}\supseteq
h_{N,j}}a_{n,i}^2)^{p/2}\\
&\le
c(1+\e)\mu(A_0)2^{-N}\int\sum_{j=1}^{2^N}(\sum_{(n,i);h_{n,i}\supseteq
h_{N,j}}a_{n,i}^2)^{p/2}\\
&=c(1+\e)\mu(A_0)\|S(\sum_{n=0}^N\sum_{i=1}^{2^n}
a_{n,i}h_{n,i})\|_p^p.
\end{array}
\end{equation*}
For the other direction we use Lemma
\ref{lm:eq_with_max_function}, (\ref{eq:M}), and the fact that
$\Lambda(F_{n,i})\ge M$ on $G_{n,i}=\varphi^{-1}(F_{n,i})$.
\begin{equation*}
\begin{array}{rl}
\int(\sum_{j=1}^{2^N}\Lambda(F_{N,j})\sum_{(n,i);h_{n,i}\supseteq
h_{N,j}}a_{n,i}^2)^{p/2}
&\ge \int(\sum_{j=1}^{2^N}{\bf
1}_{G_{N,j}}M\sum_{(n,i);h_{n,i}\supseteq
h_{N,j}}a_{n,i}^2)^{p/2}\\
&=\sum_{j=1}^{2^N}\int_{G_{N,j}}M^{p/2}(\sum_{(n,i);h_{n,i}\supseteq
h_{N,j}}a_{n,i}^2)^{p/2}\\
&\ge (\prod_{n=1}^N(1-\e_n))2^{-N}\int
M^{p/2}\sum_{j=1}^{2^N}
(\sum_{(n,i);h_{n,i}\supseteq h_{N,j}}a_{n,i}^2)^{p/2}\\
&\ge
(\prod_{n=1}^N(1-\e_n))(1+\e)^{\frac{-p}{2-p}}c\mu(A_0)
\|S(\sum_{n=0}^N\sum_{i=1}^{2^n} a_{n,i}h_{n,i})\|_p^p.
\end{array}
\end{equation*}
It is clear that the claim follows with $a = (\prod_{n=1}^{\infty}(1-\e_n))(1+\e)^{\frac{-p}{2-p}}c\mu(A_0)$.
\end{proof}

Now we continue as in the proof of \cite[Theorem 9.1, case $1<p<2$]{JMST}.
From the fact that the $|\|\cdot\||_p$ is equivalent to the usual norm in $\LP$, $\strictpbounds$, it is enough to prove only the ``moreover'' part of Theorem \ref{thm:stabilization}. The fact that $T(X)$ is $K_p$ complemented in $L_p$ will follow from \cite{JMST} (The norm of the projection there depends only on the isomorphism constant and on $p$, see Lemma 9.6 and the proof of Theorem 9.1 in the case $1<p<2$ there).

Let $\{\beta_{m,j}\}_{m=0,}^\infty\phantom{}_{j=1}^{2^m}$ be a sequence of positive numbers such that $\sum_{n,i}\beta_{n,i} = \delta$.
Since we obtained $\Lambda$ as a limit of successive convex combinations of $\{v_n^2(\cdot)\}$, there exists a sequence of disjoint finite sets $\{\sigma_{m,j}\}_{m=0,}^\infty\phantom{}_{j=1}^{2^m}\subseteq \mathbb{N}$ with
$\sigma_{m,j}>\inf\{l\,:\, F_{m,j}\in\AlgE_l\}$ and a sequence of non-negative numbers $\{\alpha_n\}_{n=1}^{\infty}$ such that $\sum_{n\in\sigma_{m,j}}\alpha_n = 1$, $m=0,1,\ldots$, $j=1,2,\ldots,2^m$, and
$$
\int \left |\sum_{n\in\sigma_{m,j}}\alpha_nv_n^2(F_{m,j}) - \Lambda(F_{m,j}) \right|^{p/2} < \beta_{m,j}\int \Lambda(F_{m,j})^{p/2}
$$
for all $m=0,1,\ldots$, $j=1,2,\ldots,2^m$. Put  $u_{m,j} = \sum_{n\in\sigma_{m,j}}\alpha_nv_n^2$.

As in  \cite[Theorem 9.1]{JMST}, we define a Gaussian Haar system by
$$
k_{m,j} = \sum_{n\in\sigma_{m,j}} \alpha_n^{1/2}\sum_{h_{n,i}\subseteq F_{m,j}} Th_{n,i}
$$
for all $m=0,1,\ldots$, $j=1,2,\ldots,2^m$.  Set
$X = \overline{\vspan}\{ \sum_{n\in\sigma_{m,j}} \alpha_n^{1/2}\sum_{h_{n,i}\subseteq F_{m,j}} h_{n,i}\}_{m=0,}^{\infty}\phantom{}_{j=1}^{2^m}$
and $Y = \overline{\vspan }\{k_{m,j}\}_{m=0,}^{\infty}\phantom{}_{j=1}^{2^m}$.
We first show that some multiple of the sequence $\{k_{m,j}\}$ is almost isometrically equivalent to the Haar basis in the norm $\||\cdot|\|_p$.

For all coefficients $\{a_{n,i}\}_{n=0}^N\phantom{}_{i=1}^{2^n}$ we have
\begin{equation}
\begin{array}{rl}
\|\sum_{n=0}^N\sum_{i=1}^{2^n} a_{n,i}^2(\Lambda(F_{n,i})-u_{n,i}(F_{n,i}))\|_{p/2}^{p/2} &= \int(\sum_{n=0}^N\sum_{i=1}^{2^n} a_{n,i}^2(\Lambda(F_{n,i})-u_{n,i}(F_{n,i})))^{p/2}\\
&\leq \int \sum_{n=0}^N\sum_{i=1}^{2^n} a_{n,i}^p|\Lambda(F_{n,i})-u_{n,i}(F_{n,i})|^{p/2}\\
&\leq \int \sum_{n=0}^N\sum_{i=1}^{2^n} \beta_{n,i}a_{n,i}^p|\Lambda(F_{n,i})|^{p/2}\\
&\leq \delta\int (\sum_{n=0}^N\sum_{i=1}^{2^n} a_{n,i}^2\Lambda(F_{n,i}))^{p/2}\\
&=\delta \|\sum_{n=0}^N\sum_{i=1}^{2^n} a_{n,i}^2\Lambda(F_{n,i})\|_{p/2}^{p/2}
\end{array}
\end{equation}
and using Claim \ref{claim:main} we immediately get
\begin{equation}\label{eq:equivbasis}
a(1-\delta)\|S(\sum_{n=0}^N\sum_{i=1}^{2^n} a_{n,i}h_{n,i})\|_p^p\le
\|\sum_{n=0}^N\sum_{i=1}^{2^n}
a_{n,i}^2u_{n,i}(F_{n,i})\|_{p/2}^{p/2}\le b(1+\delta)
\|S(\sum_{n=0}^N\sum_{i=1}^{2^n} a_{n,i}h_{n,i})\|_p^p.
\end{equation}
Since $\{Th_{n,i}\}$ are disjointly supported with respect to the Haar basis,  it follows that
$$
S^2(k_{m,j}) = \sum_{n\in\sigma_{m,j}} \alpha_nS^2(\sum_{h_{n,i}\subseteq F_{m,j}} Th_{n,i}) = \sum_{n\in\sigma_{m,j}} \alpha_n v^2_n(F_{m,j}) = u_{m,j}(F_{m,j})
$$
and now using the fact that $\{k_{n,i}\}$ are disjointly supported with respect to the Haar basis we get
\begin{equation}\label{eq:equivbasis2}
S^2(\sum_{n=0}^N\sum_{i=1}^{2^n} a_{n,i}k_{n,i}) = \sum_{n=0}^N\sum_{i=1}^{2^n} a_{n,i}^2S^2(k_{n,i}) = \sum_{n=0}^N\sum_{i=1}^{2^n} a_{n,i}^2 u_{n,i}(F_{n,i}).
\end{equation}
Now we just have to observe that for any $x\in\LP$ we have $\|S(x)\|_p^p = \|S^2(x)\|_{p/2}^{p/2}$ and combining this with (\ref{eq:equivbasis}) and (\ref{eq:equivbasis2}) gives us
\begin{equation}\label{eq:almostisometry}
a(1-\delta)\|S(\sum_{n=0}^N\sum_{i=1}^{2^n} a_{n,i}h_{n,i})\|_p^p\le
\|S(\sum_{n=0}^N\sum_{i=1}^{2^n} a_{n,i}k_{n,i})\|_p^p \le b(1+\delta)
\|S(\sum_{n=0}^N\sum_{i=1}^{2^n} a_{n,i}h_{n,i})\|_p^p.
\end{equation}
The last estimate shows that some multiple of the sequence $\{k_{n,i}\}$ is almost isomterically equivalent to the Haar basis with respect to $\||\cdot|\|_p$.
We must mention that (\ref{eq:almostisometry}) also implies that some multiple of $T$ is almost an isometry on $X$. This follows from the fact
$S^2(\sum_{n\in\sigma_{m,j}} \alpha_n^{1/2}\sum_{h_{n,i}\subseteq F_{m,j}} h_{n,i}) = {\bf 1}_{F_{m,j}}$, hence
$$
\mu(A_0)\|S(\sum_{n=0}^N\sum_{i=1}^{2^n} a_{n,i}h_{n,i})\|_p^p =
\|S(\sum_{n=0}^N\sum_{i=1}^{2^n} a_{n,i}(\sum_{m\in\sigma_{n,i}} \alpha_m^{1/2}\sum_{h_{m,j}\subseteq F_{n,i}} h_{m,j}))\|_p^p.
$$
\end{proof}

\newpage

\begin {thebibliography}{00}

\bibitem{Apostol_lp}
C.~Apostol, \emph{Commutators on $\ell_p$ spaces}, Rev. Roum. Math.
  Appl. \textbf{17} (1972), 1513--1534.

\bibitem{Apostol_c0}
C.~Apostol, \emph{Commutators on $c\sb{0}$-spaces and on $\linf$ -spaces.}  Rev. Roum. Math. Pures Appl.  \textbf{18}  (1973), 1025--1032.

\bibitem{BrownPearcy}
A.~Brown, C.~Pearcy  \emph{Structure of commutators of operators}, Ann. of Math.
  \textbf{82} (1965), 112--127.

\bibitem{Dosev}
D.~Dosev \emph{Commutators on $\ell_1$}, J. of Func. Analysis  256  (2009) 3490--3509.

\bibitem{DJ}
D.~Dosev, W.~B.~Johnson  \emph{Commutators on $\ell_\infty$}, Bull. London Math. Soc. \textbf{42}  (2010),  155--169.

\bibitem{Enflo_Starbird}
P.~Enflo, T.~W.~Starbird  \emph{Subspaces of $L_1$ containing $L_1$}, Studia Math. \textbf{65} (1979), 203--225.

\bibitem{Gamlen_Gaudet}
J.~L.~B.~Gamlen, R.~J.~Gaudet  \emph{On subsequences of the Haar system in $\LP [0,1]$, $1<p<\infty$}, Israel J. Math. \textbf{15} (1973), 404--413.

\bibitem{JMST}
 W.~B.~Johnson,  B.~Maurey,  G.~Schechtman, L.~Tzafriri,  \emph{Symmetric structures in Banach spaces.}  Mem. Amer. Math. Soc.  19  (1979),  no. 217, v+298 pp.

\bibitem{KaltonEnd} N.~J.~Kalton, \emph{The Endomorphisms of $L_p$ $(0\leq p\leq 1)$}, Indiana Univ. Math. J.  \textbf{27}  (1978), no. 3, 353--381.

\bibitem{LT}
J.~Lindenstrauss, L.~Tzafriri, \emph{Classical Banach spaces. I.} Ergebnisse der Mathematik und ihrer Grenzgebiete, \textbf{92}  Springer-Verlag, Berlin-New York, 1977.

\bibitem{ni} E.~M.~Niki\v sin,  \emph{Resonance theorems and superlinear operators} (Russian)  Uspehi Mat. Nauk  \textbf{25}  (1970),  no. 6(156), 129--191.

\bibitem{Rosenthal_L1}
H.~P.~Rosenthal, \emph{Embeddings of $L_1$ in $L_1$}, Contemp. Math. \textbf{26} (1984), 335--349.

\bibitem{Wintner}
A.~Wintner, \emph{The unboundedness of quantum-mechanical matrices}, Phys. Rev
  \textbf{71} (1947), 738--739.
\end {thebibliography}

\begin{tabular}{lll}
D. Dosev&W.B. Johnson&G. Schechtman\\
Department of Mathematics&Department of Mathematics&Department of Mathematics\\
Weizmann Institute of Science&Texas A\&M University&Weizmann Institute of Science\\
Rehovot, Israel&College Station, TX  77843 U.S.A.&Rehovot, Israel\\
{\tt dosevd@weizmann.ac.il}&{\tt johnson@math.tamu.edu}
&{\tt gideon@weizmann.ac.il}\\
\end{tabular}

\end{document}